\newtheorem{teo}{Theorem}[section]
\newtheorem{oss}[teo]{Remark}
\newtheorem{Prop}[teo]{Proposition}
\newtheorem{lemma}[teo]{Lemma}
\newtheorem{Defi}[teo]{Definition}
\newtheorem{corollario}[teo]{Corollary}
\newtheorem{no}[teo]{Notation}
\newcommand{\cc}{_{^{_\HH}}}
\newcommand{\res}{\mathop{\hbox{\vrule height 7pt width .5pt depth 0pt
\vrule height .5pt width 6pt depth 0pt\,}}\nolimits}
\def \op{^\perp}
\newcommand{\LL}{\mathop{\hbox{\vrule height .5pt width 6pt depth
0pt \vrule height 7pt width .5pt depth 0pt\,}}\nolimits}
\newcommand{\rr}{_{^{_\mathcal{R}}}}
\newcommand{\ngr}{_{^{_{\mathrm Gr}}}}
\def \cin{{\mathbf{C}^{\infty}}}
\def\dim {\mathrm{dim}}
\def\dc {d_{\!C\!C}}
\def\ss{_{^{_{\HS}}}}
\def\eu {_{^{_{\rm Eu}}}}
\def\g{h\cc}
\def\dg{\textit{grad}\cc}
\def\qq{\textit{grad}\ss}
\def\ts{_{^{\!{_{\TT\!S}}}}}
\def \per  {\sigma^{2n}\cc}
\def \perh {\sigma^{2n}\cc}
\def \perht {(\sigma^{2n}\cc)_t}
\def \cn{\textit{w}}
\def\WW{\widetilde{W}}
\def\Sph{\mathbb{S}_{\mathbb{H}^n}}
\def\Spu{\mathbb{S}_{\mathbb{H}^1}}
\def\UU{\mathcal{U}}
\def \nn{\nu\cc}
\def \XX{\mathfrak{X}}
\def \MS{\mathcal{H}\cc}
\def \P{{\mathcal{P}}}
\def \PH{\P\cc}
\def \Om{\Omega}
\def \R{\mathbb{R}}
\def \Rn{\mathbb{R}^{\DN}}
\def \div{\mathit{div}}
\def \GG{\mathbb{G}}
\def \perth{\left(\perh\right)_t}
\def\divh{\div_{\!^{_\HH}}}
\def\lh{\mathcal{L}\ss}
\def\lg{\mathcal{D}\ss}
\def\tsc{\nabla^{^{_{\TT\!{S}}}}}
\def\gs{\nabla^{_{\HS}}}
\def\gc{\nabla^{_{\HH}}}
\def \Tor{{\textsc{T}}}
\def \cn{\textit{w}}
\def\WW{\widetilde{W}}
\def\UU{\mathcal{U}}
\def\UU{\mathcal{U}}
\def \nn{\nu_{_{\!\HH}}}
\def \cont{{\mathbf{C}}}
\def \Om{\Omega}
\def \Rn{\mathbb{R}^{n}}
\def \R{\mathbb{R}}
\def \cji {c_{j\,i}(x)}
\def \C { C(x):=[\cji]_{j,i},\,\, {j=1,\ldots,m \,,\, i=1,\ldots,n}}
\def \X {X=(X_{1}, \ldots, X_{m_1})}
\def \X0 {X_{1}(0)\!=\!\partial_{x_{1}}, \ldots, X_{m_1}(0)\!=\!\partial_{x_{m_1}}}
\def \HG {\HH\GG}
\def \HS {\HH\!{S}}
\def \TG {\mathit{T}\GG}
\def \HH {\mathit{H}}
\def \TT {\mathit{T}}
\def \TS {\mathit{T}S}
\def \grad{\textit{grad}}
\def \C0H{\mathbf{C}_{0}^{\infty}(U,\HG)}
\def \C00{\mathbf{C}_{0}^{\infty}(U)}
\def \C01{\mathbf{C}_{0}^{1}(U)}
\def \L1{d\,\mathcal{L}^n}
\def \H1{\mathcal{H}_{{\bf cc}}^{1}}
\def \exp{\textsl{exp\,}}
\def \log{\textsl{log\,}}
\def \Om{\Omega}
\def \Rn{\mathbb{R}^{n}}
\def \R{\mathbb{R}}
\def \cji {c_{j\,i}(x)}
\def \C { C(x):=[\cji]_{j,i},\,\, {j=1,\ldots,m \,,\, i=1,\ldots,n}}
\def \GG{\mathbb{G}}
\def \X {X=(X_{1}, \ldots, X_{m_1})}
\def \X0 {X_{1}(0)\!=\!\partial_{x_{1}}, \ldots, X_{m_1}(0)\!=\!\partial_{x_{m_1}}}
\def \HG {\mathit{H}}
\def \C0H{\mathbf{C}_{0}^{\infty}(\Om,\HG)}
\def \C00{\mathbf{C}_{0}^{\infty}(\Om)}
\def \C01{\mathbf{C}_{0}^{1}(\Om)}
\def \exp{\textsl{exp\,}}
\def\GG{\mathbb{G}}
\begin{document}

\title{Stability of Heisenberg Isoperimetric Profiles}
\author{Francescopaolo Montefalcone
\thanks{F. M. was partially supported by CIRM, Fondazione Bruno Kessler, Trento, and by the Fondazione CaRiPaRo Project ``Nonlinear Partial Differential Equations: models, analysis, and control-theoretic problems".}}

\maketitle

\date{}

\begin{abstract}In the context of sub-Riemannian {Heisenberg
groups} $\mathbb{H}^n,\,n\geq 1$, we shall study Isoperimetric
Profiles, which are closed compact hypersurfaces having constant
horizontal
 mean curvature, very similar to
ellipsoids. Our main goal is to study the stability of Isoperimetric Profiles.\\{\noindent \scriptsize \sc Key words and phrases:}
{\scriptsize{\textsf {Carnot groups; Sub-Riemannian geometry;
hypersurfaces.}}}\\{\scriptsize\sc{\noindent Mathematics Subject
Classification:}}\,{\scriptsize \,49Q15, 46E35, 22E60.}
\end{abstract}

\tableofcontents

\section{Introduction}\label{basicsHYP}

In the last few years sub-Riemannian Carnot groups have become a large research field in Analysis and Geometric Measure Theory; see, for
instance, \cite{balogh}, \cite{CDPT}, \cite{vari}, \cite{DanGarN8,
gar}, \cite{FSSC3, FSSC5} , \cite{LeoM}, \cite{Monti}, \cite{Mag},
\cite{Monteb}, \cite{RR}, but the  list is far from being exhaustive. For a general overview of
sub-Riemannian (or Carnot-Charath\'eodory)  geometries, we refer the reader to  Gromov, \cite{Gr1},
Pansu, \cite{P4}, and Montgomery, \cite{Montgomery}.

In this paper, our ambient space is the {\it Heisenberg
group} $\mathbb{H}^n,\,n\geq 1,$ which can be regarded as $\mathbb{C}^n\times \R$ endowed with a
polynomial group law $\star:\mathbb{H}^n\times \mathbb{H}^n\longrightarrow\mathbb{H}^n$. Its Lie algebra $\mathfrak{h}_n$
identifies with the tangent space  $\TT_0\mathbb{H}^n$ at the
identity $0\in\mathbb{H}^n$.  Later on, $(z, t)\in\R^{2n+1}$ will denote exponential coordinates of a
generic point $p\in\mathbb H^n$. Now, take a left-invariant frame
$\mathcal{F}=\{X_1, Y_1,...,X_n, Y_n, T\}$ for the tangent bundle $\TT\mathbb{H}^n$,
where $X_i=\frac{\partial}{\partial x_i} -
\frac{y_i}{2}\frac{\partial}{\partial t}$,
$Y_i=\frac{\partial}{\partial y_i} +
\frac{x_i}{2}\frac{\partial}{\partial t}$  and
$T=\frac{\partial}{\partial t}$. Denoting by $[\cdot, \cdot]$ the
usual Lie bracket of vector fields, one has
$[X_i,Y_i]=T$ for every $i=1,...,n$ and all other commutators
vanish. Hence, $T$ is the {\it center} of $\mathfrak{h}_n$ and
 $\mathfrak{h}_n$ turns out to be nilpotent and stratified of step 2, i.e.
$\mathfrak{h}_n=\HH\oplus \HH_2$ where $\HH:={\rm span}_{\R}\{X_1,
Y_1,...,X_i,Y_i,...,X_n,Y_n\}$ is the  {\it horizontal bundle} and
$\HH_2={\rm span}_{\R}\{T\}$ is the $1$-dimensional vertical
bundle associated with the center of $\mathfrak{h}_n$. From now
on, $\mathbb{H}^n$ will be endowed with the (left-invariant)
Riemannian metric $h:=\langle\cdot, \cdot\rangle$ which makes
$\mathcal{F}$ an orthonormal frame. In particular, this metric
induces a corresponding metric $h\cc$ on $\HH$ which is used in
order to measure the length of horizontal curves. Note that the
natural distance in sub-Riemannian geometry is the  {\it
Carnot-Carath\'eodory  distance}  $\dc$, defined  by minimizing
the (Riemannian) length of all piecewise smooth horizontal curves
joining two different points. This definition makes sense because,
in view of Chow's Theorem, different points can be joined by
(infinitely many) horizontal curves.

The stratification of  $\mathfrak{h}_n$ is related with the
existence of a 1-parameter group of automorphisms, called {\it
Heisenberg dilations}, defined by $\delta_s (z, t):=(s z, s^2 t)$,
for every $p\equiv(z, t)\in\R^{2n+1}$. The intrinsic dilations play an important role in
this geometry. In this regard, we stress that the integer $Q=2n+2$, that is the \textquotedblleft homogeneous dimension\textquotedblright of $\mathbb H^n$ with respect to these anisotropic dilations, turns out to be the  dimension of $\mathbb{H}^n$ as a
metric space with respect to the {\it CC-distance} $\dc$.

Let us define a key notion: that of $\HH$-perimeter\footnote{Since we deal with smooth boundaries, we do not define the $\HH$-perimeter from a variational point of view.}.  So let $S\subset\mathbb{H}^n$ be a smooth hypersurface and let $\nu$ the (Riemannian) unit normal along $S$. The {\it $\HH$-perimeter measure} $\perh$ is the $(Q-1)$-homogeneous measure, with respect to the intrinsic dilations, given by
$\perh\res S:=|\PH\nu|\,\sigma^{2n}\rr$, where $\PH:\TG\longrightarrow\HH$ is the orthogonal projection operator onto $\HH$ and $\sigma^{2n}\rr$ denotes the Riemannian measure on $S$. The $\HH$-perimeter is in fact the natural measure on hypersurfaces and it turns out to be equivalent, up to a density function called metric factor (see, for instance, \cite{Mag}), to the spherical $(Q-1)$-dimensional Hausdorff measure associated with $\dc$ (or to any other homogeneous distance on $\mathbb{H}^n$).

Our main interest concerns  \textquotedblleft Isoperimetric Profiles\textquotedblright, that are compact
closed hypersurfaces which can be described in terms of
CC-geodesics (even if they are not CC-balls). In Heisenberg groups, they play an equivalent role of spheres in Euclidean spaces and for this reason it seems interesting to study some basic geometric features of these sets from an intrinsic point of view; see also \cite{Monted}.

Let us briefly describe them in the case of the 1st Heisenberg group
$\mathbb{H}^1$.  Any
CC-geodesic $\gamma\subset\mathbb{H}^1$ is either a Euclidean horizontal line or a \textquotedblleft suitable\textquotedblright infinite circular helix of
constant slope and whose axis is parallel to the center $T$  of $\mathfrak{h}_1$. In the last case, fix a point  ${\mathcal S} \in \gamma$ and
take the vertical $T$-line through $\mathcal S$. With no loss of generality, we may take $\mathcal S\in\left\lbrace (x, y, t)\in\R^3: x=y=0\right\rbrace$.  On this (positively oriented) line, there is a first consecutive point\footnote{This point
can be interpreted as the \textquotedblleft cut point\textquotedblright of $\mathcal S$ along $\gamma$. In fact this is the end-point of all CC-geodesics starting from $\mathcal S$ with same slope. Note however that, strictly speaking the cut locus of any point in $\mathbb{H}^n\, (n\geq 1)$ coincides with the vertical $T$-line over the point.} $\mathcal N$ to $\mathcal S$  belonging to $\gamma$.
These points, henceforth called
South and North poles,
determine a minimizing connected subset of $\gamma$.  Note that the slope of $\gamma$ is uniquely determined by the
CC-distance of the poles\footnote{Let $\gamma_0\subset\R^2$ denote the circle given by orthogonal
projection of $\gamma$ onto $\R^2$ and let $r$ be its radius. Then, it turns out that $\dc(\mathcal S, \mathcal N)=\pi r^2$.}.
Now rotating (around the vertical  $T$-line
joining $\mathcal{N}$ to $\mathcal{S}$) the connected subset of $\gamma$ joining the poles, yields a
closed convex surface very similar to an ellipsoid hereafter
called {\it Isoperimetric Profile}. A similar description holds even in the general case; see
Section \ref{Sez3}.

Isoperimetric Profiles, henceforth denoted by the symbol $\Sph$, turn out to be
constant horizontal mean curvature hypersurfaces (i.e. $\MS=-\div\cc\nn$ is constant; in particular, this implies that they are critical points of the $\HH$-perimeter functional) whose importance comes from a
long-standing conjecture, usually attributed to Pansu, claiming
that they minimize the $\HH$-perimeter in the class of finite
$\HH$-perimeter sets (in the variational sense) having fixed volume, or in other words, they
solve the  sub-Riemannian isoperimetric problem in $\mathbb{H}^n$. There is a wide
literature on this subject; see, for instance,  \cite{CDPT}, \cite{DanGarN8},
\cite{gar}, \cite{gar2}, \cite{LeoM}, \cite{Monti, Monti2},
\cite{Monti3}, \cite{Monti4},  \cite{P1, Pansu2}, \cite{RR} and references therein.\\

The plan  of the paper is the following.
In Section \ref{hngeo} we review the sub-Riemannian geometry of
Heisenberg groups $\mathbb{H}^n$. We then discuss some basics about smooth hypersurfaces endowed with the
$\HH$-perimeter measure $\perh$ and we prove some important geometric facts; see Section \ref{sez22}.
Section \ref{IBPAA} provides some
horizontal integration by parts formulas. In Section \ref{Sez3} we study Isoperimetric Profiles and
compute some of their geometric invariants appearing in the 2nd variation formula of the $\HH$-perimeter.  Section  \ref{1e2varfor} gives a self-contained account of
variational formulas for the $\HH$-perimeter
measure $\perh$ along the lines of \cite{Monteb}, but in addition we
consider the case of non-empty characteristic sets. These formulas are then used as a tool to study
the (homogeneous) sub-Riemannian Isoperimetric Functional\[J (D):=\frac{\perh(\partial D)}
{\left(\sigma^{2n+1}\rr(D)\right)^{1-\frac{1}{Q}}},\]where $D\subsetneq\mathbb H^n$ varies among $\cont^2$-smooth compact domains.  In Section \ref{volvarIS}, we  calculate  1st and 2nd variation of the top-dimensional volume form $\sigma^{2n+1}\rr$. This allow us to state the notion of {\it stability} for smooth domains bounded by constant horizontal mean
curvature hypersurfaces; see Definition \ref{strdef} and Definition \ref{strdef2}.
If $D$ has radial symmetry with respect to a  barycentric vertical axis, we also consider a restricted family of (normal) radial variations. In this case, the functional $J(D)$ becomes 1-dimensional and \it stability \rm becomes \it radial stability; \rm see Remark \ref{radstrdef}. We also introduce a localized notion of stability. Roughly speaking, being \it locally stable \rm means that for each point $p\in\partial D$ there exists a  neighborhood of $p$ which is stable in the previous sense; see Definition \ref{strdefloc}.  For  completeness, in the Appendix A we shall discuss the simpler (but less general) case of $T$-graphs. Moreover, in the Appendix B, we shall discuss some further properties which are related to stability.

Then, in
Section \ref{STABs} we begin the study of the
stability of Isoperimetric Profiles, or the positivity of the 2nd variation  of the isoperimetric functional $J(\cdot)$. Our approach was somehow motivated by the Riemannian case described here below; see, for instance, \cite{DC}.

We recall that the 2nd variation (under normal variations)
of a compact closed bounding hypersurface $S$ embedded in  Euclidean space $\R^n$ is provided by the formula
\[II(\varphi\nu, S)=\int_S\left( -\varphi\Delta\ts\varphi-\varphi^2\|B\|^2\ngr\right)\, \sigma^{n-1}\rr\]for every (piecewise) smooth $\varphi:S\longrightarrow\R$, where $\Delta\ts$ denotes the Laplace-Beltrami operator and $\|B\|\ngr$ is the Gram norm of the 2nd fundamental form $B$ of $S$. So let $\mathbb S^{n-1}\subset\Rn$ be the unit sphere and let us apply the Rayleigh   principle; see \cite{Ch1, Ch3}. We have \[\lambda_1:=\lambda_1(\mathbb S^{n-1})\leq\frac{\int_{\mathbb S^{n-1}}|\grad\ts\varphi|^2\, \sigma^{n-1}\rr}{\int_{\mathbb S^{n-1}}\varphi^2\, \sigma^{n-1}\rr}\] for every smooth function $\varphi:\mathbb S^{n-1}\longrightarrow\R$ such that $\int_{\mathbb S^{n-1}}\varphi\, \sigma^{n-1}\rr=0$, where $\lambda_1$ denotes the first non-trivial eigenvalue of (the closed eigenvalue problem on) $\mathbb S^{n-1}$. It is well-known that $\lambda_1=n-1$ and that $\|B\|^2\ngr=n-1$.  Therefore,
 \begin{eqnarray*}
 II(\varphi\nu, {\mathbb S^{n-1}})=\int_{\mathbb S^{n-1}}\left( -\varphi\Delta\ts\varphi-\varphi^2\|B\|^2\ngr\right)\, \sigma^{n-1}\rr=\int_{\mathbb S^{n-1}}\left( |\grad\ts\varphi|^2\, -(n-1)\varphi^2\right)\, \sigma^{n-1}\rr
\geq0,\end{eqnarray*}where we have used the  Divergence Theorem. This proves the stability of ${\mathbb S^{n-1}}$, i.e.  $II(\varphi\nu, {\mathbb S^{n-1}})\geq 0$ for every  differentiable function $\varphi:\mathbb S^{n-1}\longrightarrow\R$ such that $\int_{\mathbb S^{n-1}}\varphi\, \sigma^{n-1}\rr=0$.

However, a such strategy does not work verbatim in the framework of Heisenberg groups and our methods, although similar in spirit,  are very different.
Actually, the main analogy here is that the positivity of the 2nd variation formula can be studied in terms of an eigenvalue equation associated with the (2nd variation) functional$$\mathfrak{F}(\varphi):=\int_{\Sph}\left(|\qq\varphi|^2-
\frac{Q-4}{\rho^2}\varphi^2\right)\,\perh $$subject to the condition  $\int_{\Sph}\varphi\,\perh=0$. In this formula, $\qq$ denotes the horizontal tangent gradient operator and $\rho$ stands for the (Euclidean) distance from the vertical $T$-line passing through the barycenter of $\Sph$;  for a detailed discussion, see Section \ref{STABs}.\\

Our main results concerning stability of Isoperimetric Profiles can be summarized as follows: \it
\begin{itemize}\item Let $n=1$. The Isoperimetric profile $\Spu$ is a  stable bounding hypersurface in the sense of both Definition \ref{strdef} and  Definition \ref{strdef2}. \item Let $n>1$. The Isoperimetric profile $\Sph$ is a  radially stable bounding hypersurface in the sense of Remark \ref{radstrdef}. Furthermore, the Isoperimetric Profile $\Sph$ turns out to be a locally stable bounding hypersurface in the sense of Definition \ref{strdefloc}.\end{itemize}

\rm

This paper is part of a project aiming to study constant and minimal horizontal mean curvature
hypersurfaces, in the setting of
Heisenberg groups; see also \cite{Monted}. I would like to express my gratitude to Prof. N. Garofalo and to Prof. A. Parmeggiani for many interesting conversations about these topics over the past few years.

\subsection{Heisenberg group $\mathbb{H}^n$}\label{hngeo}

The {\it $n$-th Heisenberg group} $(\mathbb{H}^n,\star)$, $n\geq
1$, is a connected, simply connected, nilpotent and stratified Lie
group of step 2 on $\R^{2n+1}$, with respect to a polynomial group
law $\star$. The {\it Lie algebra} $\mathfrak{h}_n$ of
$\mathbb{H}^n$ is a $(2n+1)$-dimensional real vector space
henceforth identified with the tangent space  $\TT_0\mathbb{H}^n$ at
the identity $0\in\mathbb{H}^n$. We adopt {\it exponential
coordinates of the 1st kind} in such a way that every point
$p\in\mathbb{H}^n$  can be written out as
$p=\exp(x_1,y_1,...,x_i,y_i,...,x_n,y_n, t)$. The Lie algebra
$\mathfrak{h}_n$ can  be described by means of a frame
${\mathcal{F}}:=\{X_1,Y_1,...,X_i,Y_i,...,X_n,Y_n,T\}$
of left-invariant vector fields for $\TT\mathbb{H}^n$, where
$X_i(p):=\frac{\partial}{\partial x_i} -
\frac{y_i}{2}\frac{\partial}{\partial t},\,
Y_i(p):=\frac{\partial}{\partial y_i} +
\frac{x_i}{2}\frac{\partial}{\partial t},\,\, i=1,...,n,\,
T(p):=\frac{\partial}{\partial t},$ for every $p\in\mathbb{H}^n$.
More precisely, denoting by $[\cdot, \cdot]$ the Lie bracket of vector
fields, we get that $[X_i,Y_i]=T$ {for every} $i=1,...,n$, and all other commutators
vanish. In other words, $T$ is the {\it center} of
$\mathfrak{h}_n$ and
 $\mathfrak{h}_n$ turns out to be a nilpotent and stratified Lie algebra of
step 2, i.e. $\mathfrak{h}_n=\HH\oplus \HH_2$. The first layer $\HH$
is called {\it horizontal} whereas the complementary layer $\HH_2={\rm span}_{\R}\{T\}$
is called {\it vertical}.  A horizontal left-invariant frame for
$\HH$ is given by ${\mathcal{F}}\cc=\{X_1,
Y_1,...,X_i,Y_i,...,X_n,Y_n\}.$ The group law $\star$ on
$\mathbb{H}^n$ is determined by a corresponding operation
$\diamond$ on  $\mathfrak{h}_n$, i.e. $\exp X\star\exp
Y=\exp(X\diamond Y)$ for every $X,\,Y \in \mathfrak{h}_n,$ where
$\diamond:\mathfrak{h}_n\times \mathfrak{h}_n\longrightarrow
\mathfrak{h}_n$ is defined by $X\diamond Y= X + Y+
\frac{1}{2}[X,Y]$. Thus, for every
 $p=\exp(x_1,y_1,...,x_n,y_n,
t),\,\,p'=\exp(x'_1,y'_1,...,x'_n,y'_n, t')\in \mathbb{H}^n$ we
have
\[p\star p':= \exp\left(x_1+x_1', y_1+y_1',...,x_n+x_n', y_n+y_n', t+t'+ \frac{1}{2}\sum_{i=1}^n
\left(x_i y'_{i}- x'_{i} y_i\right)\right).\] The
inverse of ${p}\in\mathbb{H}^n$  is given by
${p}^{-1}:=\exp(-{x}_1,-y_1...,-{x}_{n}, -y_n, -t)$ and
$0=\exp(0_{\R^{2n+1}})$. Later on, we shall set $z:=(x_1, y_1,..., x_n, y_n)\in\R^{2n}$ and identify each point $p\in\mathbb H^n$ with its exponential coordinates $(z, t)\in\R^{2n+1}$.

\begin{Defi}\label{dccar} We call {\rm sub-Riemannian metric} $\g$ any
symmetric positive bilinear form on  $\HH$.
The {\rm {CC}-distance} $\dc(p,p')$ between $p, p'\in
\mathbb{H}^n$ is defined by
$$\dc(p, p'):=\inf \int\sqrt{\g(\dot{\gamma},\dot{\gamma})} dt,$$
where the $\inf$ is taken over all piecewise-smooth horizontal
curves $\gamma$ joining $p$ to $p'$. We shall equip
$\TT\mathbb{H}^n$ with the left-invariant Riemannian metric
  $h:=\langle\cdot,\cdot\rangle$ making ${\mathcal{F}}$ an
orthonormal -abbreviated o.n.- frame and assume $\g:=h|_\HH.$
\end{Defi}

By Chow's Theorem it turns out that every couple of points can
be connected by a horizontal curve, not necessarily unique, and for this reason $\dc$ turns out to be a metric on $\mathbb{H}^n$. Moreover, the $\dc$-topology is equivalent to the Euclidean
topology on $\R^{2n+1}$; see \cite{Gr1}, \cite{Montgomery}. The
so-called {\it structural constants} (see \cite{Helgason},
\cite{3} or \cite{Monte, Monteb}) of $\mathfrak{h}_n$ are
described by the skew-symmetric $(2n\times 2n)$-matrix
 $$C\cc^{2n+1}:=\left|
\begin{array}{ccccccc}
  0 & 1 & 0 & 0 &  \cdot &  0 &  0 \\
  -1 & 0 & 0 & 0 &   \cdot &  0 &  0 \\
  0 & 0 & 0 & 1 &   \cdot & 0 & 0 \\
  0 & 0 & -1 & 0 &  \cdot & 0 & 0 \\
  \cdot & \cdot & \cdot & \cdot & \cdot & \cdot & \cdot \\
  0 & 0 & 0 & 0  & \cdot & 0 & 1 \\
  0 & 0 & 0 & 0  & \cdot & -1 & 0
\end{array}%
\right|,$$which is the matrix associated
with the skew-symmetric bilinear map
$\Gamma\cc:\HH\times\HH\longrightarrow \R$ given by
$\Gamma\cc(X, Y)=\langle[X, Y], T\rangle$.

\begin{no}We set $z^\perp:=-C^{2n+1}\cc z=(-y_1, x_1, ...,-y_n,
x_n)\in\R^{2n}$ and $X^\perp:=-C^{2n+1}\cc X$ for every $X\in\HH$.
\end{no}
Given  $p\in\mathbb{H}^n$, we shall denote by
$L_p:\mathbb{H}^n\longrightarrow\mathbb{H}^n$ the {\it left
translation by $p$}, i.e. $L_pp'=p\star p'$, for every
$p'\in\mathbb{H}^n$. $L_p$ is a group homomorphism and its
differential
${L_p}_\ast:\TT_0\mathbb{H}^n\longrightarrow\TT_p\mathbb{H}^n$  is
given by the matrix
\begin{displaymath}
{L_p}_\ast=\frac{\partial (p\star p')}{\partial p'}\bigg|_{p'=0}=
\left[%
\begin{array}{ccccccccc}
  1 & 0 & \ldots &0&0& \ldots& 0 & 0 & 0 \\
  0 & 1 & \ldots &0&0& \ldots & 0 &0 & 0 \\
  \vdots & \vdots & \ldots & \vdots & \vdots & \ldots & \vdots & \vdots&\vdots \\
  0 & 0 & \ldots & 0& 0 & \ldots & 0 & 1 & 0 \\
 - \frac{y_1}{2} & +\frac{x_1}{2} & \ldots&-\frac{y_i}{2} & +\frac{x_i}{2} & \ldots & -\frac{y_n}{2} & +\frac{x_n}{2} & 1\\
\end{array}%
\right].
\end{displaymath}Equivalently, one has
${L_p}_\ast={\rm{col}}[X_1(p),Y_1(p),...X_n(p),Y_n(p),T(p)]$.\\
\indent There exists a 1-parameter group of automorphisms
$\delta_s:\mathbb{H}^n \longrightarrow\mathbb{H}^n\,(s\geq 0)$,
called {\it Heisenberg dilations}, defined by $\delta_s p
:=\exp\left(s z, s^2 t\right)$ for every $s\geq 0$, where $p=\exp(z,
t)\in\mathbb{H}^n$. We recall that the {\it homogeneous dimension}
of $\mathbb{H}^n$ is the integer $Q:=2n+2$. By a well-known result of Mitchell (see, for instance, \cite{Montgomery}), this number coincides with
the {\it Hausdorff dimension} of $\mathbb{H}^n$ as metric space
with respect to the CC-distance $\dc$; see, for instance, \cite{Gr1}, \cite{Montgomery}.\\
\indent We shall denote by $\nabla$ the unique {\it left-invariant
Levi-Civita connection} on $\TT\mathbb{H}^n$ associated with the metric
$h=\langle\cdot,\cdot\rangle$. We observe that, for every $X, Y, Z\in
\XX:=\cin(\mathbb{H}^n, \TT\mathbb{H}^n)$ one has
\[\left\langle\nabla_XY,Z\right\rangle=\frac{1}{2}
\left(\langle[X, Y], Z\rangle-\langle[Y, Z], X\rangle + \langle[Z,
X], Y\rangle\right).\]\indent For every $X,
Y\in\XX\cc:=\cin(\mathbb{H}^n,\HH)$, we shall set $\gc_X
Y:=\PH\left(\nabla_X Y\right),$ where $\PH$ denotes the orthogonal
projection operator onto $\HH$. The operation $\gc$ is a vector-bundle connection later called
{\it $\HH$-connection}; see \cite{Monteb} and references therein.
It is not difficult to see that $\gc$ is {\it flat}, {\it
compatible with the sub-Riemannian metric} $h\cc$ and {\it
torsion-free}. These properties follow from the very
definition of $\gc$ and from the corresponding properties of the
Levi-Civita connection $\nabla$.

\begin{Defi}
For any $\psi\in\cin(\mathbb{H}^n)$, the {\rm $\HH$-gradient of
$\psi$} is the horizontal vector field $\dg \psi\in\XX\cc$
such that $\langle\dg \psi,X \rangle= d \psi (X) = X \psi$ for
every $X\in \HH$. The {\rm $\HH$-divergence} $\divh X$ of
$X\in\XX\cc$ is defined, at each point $p\in \mathbb{H}^n$, by
$$\divh X(p):= \mathrm{Trace}\left(Y\longrightarrow \gc_{Y} X
\right)(p)\quad(Y\in \HH_p).$$The {\rm $\HH$-Laplacian}
$\Delta\cc$  is the 2nd order differential operator given by
\[
\Delta\cc\psi := \div\cc(\dg\psi)\quad\mbox{for every}\,\,\psi\in
\cin(\mathbb{H}^n).\]
\end{Defi}

Having fixed a left-invariant Riemannian metric $h$ on
$\TT\mathbb{H}^n$, one  defines by duality\footnote{The duality is understood with respect to the left-invariant metric $h$.} a global coframe
${\mathcal{F}}^\ast:=\{X^\ast_1,Y^\ast_1,...,X^\ast_i,Y^\ast_i,...,X^\ast_n,Y^\ast_n,T^\ast\}$
of  left-invariant $1$-forms for the cotangent bundle
$\TT^\ast\mathbb{H}^n$, where $X_i^\ast= dx_i,\,
Y_i^\ast=dy_i\,\,(i=1,...,n)$ and $$\theta:=T^\ast=dt +
\frac{1}{2}\sum_{i=1}^n\left(y_i d x_i  -  x_i d y_i\right).$$ The
differential $1$-form $\theta$ is called {\it contact form} of
$\mathbb{H}^n$.   The {\it Riemannian left-invariant volume form}
$\sigma^{2n+1}\rr\in\bigwedge^{2n+1}(\TT^\ast\mathbb{H}^{n})$ is
given by $\sigma^{2n+1}\rr:=\left(\bigwedge_{i=1}^n dx_i \wedge d
y_i\right) \wedge \theta$ and the measure obtained by integrating
$\sigma^{2n+1}\rr$ is the {\it Haar measure} of $\mathbb{H}^{n}$.

\subsection{Hypersurfaces  and some geometric
calculations}\label{sez22}

 Let $S\subset\mathbb{H}^n$ be a
$\cont^1$-smooth hypersurface and let $\nu$ be the (Riemannian)
unit normal along $S$. Remind that the Riemannian measure
$\sigma^{2n}\rr\in\bigwedge^{2n}(\TT^\ast S)$ on hypersurfaces can
be defined by {\it contraction}\footnote{Let $M$ be a Riemannian
manifold. The linear map $\LL: \Lambda^r(\TT^\ast
M)\rightarrow\Lambda^{r-1}(\TT^\ast M)$ is defined, for $X\in \TT
M$ and $\omega^r\in\Lambda^r(\TT^\ast M)$, by $(X \LL \omega^r)
(Y_1,...,Y_{r-1}):=\omega^r (X,Y_1,...,Y_{r-1})$; see, for
instance, \cite{FE}. This operation is called {\it contraction} or
{\it interior product}.} of the top-dimensional volume form
$\sigma^{2n+1}\rr$ with the unit normal $\nu$ along $S$, i.e.
$\sigma^{2n}\rr\res S := (\nu\LL \sigma^{2n+1}\rr)|_S$.

We say that $p\in S$ is a {\it characteristic point}  if
$\dim\,\HH_p = \dim (\HH_p \cap \TT_p S)$. The {\it characteristic
set} of $S$ is the set of all characteristic points, i.e. $
C_S:=\{x\in S : \dim\,\HH_p = \dim (\HH_p \cap \TT_p S)\}.$ It is
worth noticing that $p\in C_S$ if,
 and only if, $|\PH\nu(p)|=0$. Since $|\PH\nu(p)|$ is continuous
 along $S$, it follows that
$C_S$ is a closed subset of $S$, in the relative topology. We
stress that characteristic points are few. More precisely, under our current assumptions the
$(Q-1)$-dimensional Hausdorff measure of $C_S$ vanishes, i.e.
$\mathcal{H}_{CC}^{Q-1}(C_S)=0$; see \cite{balogh}, \cite{Mag}.
\begin{oss}\label{CSET}Let $S\subset \mathbb{H}^n$ be a $\cont^2$-smooth hypersurface.
By using {\rm Frobenius' Theorem}
 about integrable distributions, it can be shown that the topological dimension
 of $C_S$ is strictly less than $(n+1)$; see also \cite{Gr1}. For deeper results  about the size of $C_S$
  in $\mathbb{H}^n$, see \cite{balogh}, \cite{Bal3}.\end{oss}
Throughout this paper we make use of a homogeneous measure on
hypersurfaces, called {\it $\HH$-perimeter measure}; see also
\cite{FSSC3}, \cite{G}, \cite{DanGarN8, gar}, \cite{Mag},
\cite{Monte, Monteb}, \cite{P1},  \cite{RR}.

\begin{Defi}[$\perh$-measure]\label{sh}
 Let $S\subset\mathbb{H}^n$ be a $\mathbf{C}^1$-smooth
non-characteristic
 hypersurface and let $\nu$ be the unit
normal vector along $S$. The  {\rm unit $\HH$-normal} along $S$ is defined by $\nn:
=\frac{\PH\nu}{|\PH\nu|}.$ Then, the {\rm $\HH$-perimeter form}
$\perh\in\bigwedge^{2n}(\TT^\ast S)$ is the contraction of the
volume form $\sigma^{2n+1}\rr$ of $\mathbb{H}^n$ by the
horizontal unit normal $\nn$, i.e. $$\perh \res S:=\left(\nn \LL
\sigma^{2n+1}\rr\right)\big|_S.$$\end{Defi} If
$C_S\neq\emptyset$ we  extend $\perh$ up to $C_S$  by
setting $\perh\res C_{S}= 0$. It turns out that $\perh \res S =
|\PH \nu |\,\sigma^{2n}\rr\, \res S$.

At each non-characteristic point $p\in S\setminus {C}_S$ one has $\HH_p= {\rm
span}_\R\{\nn(p)\} \oplus \mathit{H}_p S$, where  $\mathit{H}_p
S:=\HH_p\cap\TT_p S$. This allow us to define, in the obvious way, the
associated subbundles $\HS \subset \TS$ and $\nn S$  called {\it horizontal tangent bundle} and {\it horizontal
normal bundle} along $S\setminus C_S$,
respectively.  On the other hand, at each  characteristic point $p\in C_S$,  only the subbundle $\HS$ turns out to be defined, and in this case  $\HH_pS=\HH_p$. Another important geometric object  is given by
$\varpi:=\frac{\nu_{T}}{|\PH\nu|}$; see \cite{Monte, Monteb},
\cite{gar}. Although the function $\varpi$ is not defined at $C_S$, we have $\varpi\in L^1_{loc}(S,
\perh)$.

\begin{no}Let $S\subset\mathbb{H}^n$ be a $\mathbf{C}^k$-smooth
hypersurface. We shall denote by
$\cont^i\ss(S),\,i=1, 2,...,k$, the space of functions whose $i$-th
$\HS$-derivatives are continuous\footnote{We are requiring that all  $i$-th
$\HS$-derivatives be continuous at each characteristic point $p\in C_S$.}.  An analogous notation will be used for open subsets of $S$.
\end{no}

The following definitions can also be found in \cite{Monteb}, for
general Carnot groups. {\it Below, unless otherwise specified, we
shall assume that $S\subset\mathbb{H}^n$ is a $\cont^2$-smooth
non-characteristic hypersurface}. Let $\tsc$ be the connection on
$S$ induced from the Levi-Civita connection $\nabla$ on
$\mathbb{H}^n$. As for the horizontal connection $\gc$, we define
a \textquotedblleft partial connection\textquotedblright $\gs$
associated with the subbundle $\HS\subset\TT S$ by setting
$$\gs_XY:=\P\ss\left(\tsc_XY\right)$$ for every
$X,Y\in\XX^1\ss:=\cont^1(S, \HS)$, where
$\P\ss:\TT{S}\longrightarrow\HS$ denotes the orthogonal projection
operator of $\TT{S}$ onto $\HS$. Starting from the orthogonal
splitting
 $\HH=\nn S\oplus\HS$,
it can be shown that$$\gs_XY=\gc_X Y-\left\langle\gc_X
Y,\nn\right\rangle \nn\quad\mbox{for every}\,\,X,
Y\in\XX^1\ss.$$\begin{Defi}\label{curvmed}Given $\psi\in \cont\ss^1(S)$, we define the
$\HS$-{\rm gradient of $\psi$} to be the horizontal tangent
vector field  $\qq\psi\in\XX^0\ss:=\cont(S, \HS)$ such that $\langle\qq\psi,X
\rangle= d \psi (X) = X \psi$ for every $X\in \HS$. The {\rm
$\HS$-divergence} $\div\ss X$ of $X\in\XX^1\ss$ is given, at each
point $p\in S$, by
$$\div\ss X (p) := \mathrm{Trace}\left(Y\longrightarrow
\gs_Y X \right)(p)\quad\,(Y\in \HH_pS).$$Note that $\div\ss X \in\cont(S)$. The {\rm $\HS$-Laplacian}
$\Delta_{_{\HS}}:\cont\ss^2(S)\longrightarrow\cont(S)$ is the 2nd order differential operator given by
\[\Delta\ss\psi := \div\ss(\qq\psi)\qquad\mbox{for every}\,\,\,\psi\in
\cont\ss^2({S}).\]
The horizontal 2nd fundamental form of $S$ is the bilinear map  ${B\cc}:
\XX^1\ss\times\XX^1\ss \longrightarrow C(S)$ defined by
\[B\cc(X,Y):=\left\langle\gc_X Y,\nn\right\rangle\qquad \mbox{for every}\,\,
X,\,Y\in\XX^1\ss.\]The {\rm horizontal mean curvature} is  the trace
of $B\cc$, i.e. $\MS:=\mathrm{Tr}{B\cc}$. We shall set\begin{equation*}
W\cc X:=\gc_X\nn\qquad \mbox{for every}\,\,X\in\XX\ss^1.\end{equation*} The {\rm torsion}
$\Tor\ss$ of  $\gs$ is given by
$\Tor\ss(X,Y):=\gs_XY-\gs_YX-\PH[X,Y]$ for every $X, Y\in\XX^1\ss$.
\end{Defi}

If $n=1$, the horizontal tangent space $\HS$ is 1-dimensional
and the torsion vanishes, but if $n>1$ this is no longer true in
general, because $B\cc$ is {\it not symmetric}; see \cite{Monteb}.
Therefore, it is convenient to represent $B\cc$ as a
sum of two operators, one symmetric and the other skew-symmetric,
i.e. $B\cc= S\cc + A\cc$. It turns out
that
$A\cc=\frac{1}{2}\varpi\,C^{2n+1}\cc\big|_{\HS}$; see
 \cite{Monteb}. The linear operator
$C^{2n+1}\cc$ only  acts on horizontal tangent vectors and hence  we shall set
$C^{2n+1}\ss:=C^{2n+1}\cc\big|_{\HS}$.

\begin{Defi} In analogy with the
Riemannian case, the eigenvalues $\kappa_i , i\in I\ss,$
of the symmetric linear map $S\cc$ are called {\rm principal horizontal
curvatures}.
\end{Defi}

\begin{Defi}\label{movadafr}Let $S\subset\mathbb{H}^n$ be a $\cont^2$-smooth non-characteristic hypersurface.
We  call {\rm adapted  frame} along $S$  any o.n. frame
${\mathcal{F}}:=\{\tau_1,...,\tau_{2n+1}\}$ for
$\TT\mathbb{H}^n$
 such that:
\[\tau_1|_S=\nn,\qquad \HH_pS=\mathrm{span}_\R\{\tau_2(p),...,\tau_{2n}(p)\}\quad\mbox{for every}\,\,\,p\in S,
\qquad\tau_{2n+1}:= T.\]Furthermore, we shall set
$I\cc:=\{1,2,3,...,2n\}$ and
 $I\ss:=\{2,3,...,2n\}$.\end{Defi}

\begin{lemma}\label{Sple} Let $S\subset\mathbb{H}^n$  be a $\cont^2$-smooth
non-characteristic hypersurface and fix $p\in S$.
We can always choose an adapted o.n. frame
${\mathcal{F}}=\{\tau_1,...,\tau_{2n+1}\}$ along $S$
such that $\langle\nabla_{X}{\tau_i}, \tau_j\rangle=0$ at $p$ for
every $i, j\in I\ss$ and every $X\in\HH_{p}S$.
\end{lemma}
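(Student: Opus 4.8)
The plan is to construct the desired adapted orthonormal frame at the single point $p$ by a pointwise linear-algebra argument, and then extend it to a neighborhood in $S$ in such a way that the first covariant derivatives along horizontal tangent directions behave as prescribed. First I would recall that an adapted frame is required to satisfy $\tau_1|_S=\nn$, $\HH_pS=\mathrm{span}_\R\{\tau_2,\dots,\tau_{2n}\}$ and $\tau_{2n+1}=T$; the content of the lemma is the extra \emph{geodesic-like} normalization $\langle\nabla_X\tau_i,\tau_j\rangle=0$ at $p$ for $i,j\in I\ss$ and $X\in\HH_pS$. The key observation is that this is exactly a Gram--Schmidt/normal-coordinate type statement for the induced horizontal tangent connection: the quantities $\langle\nabla_X\tau_i,\tau_j\rangle$ for $i,j\in I\ss$ are the connection coefficients of the horizontal tangent bundle $\HS$ in the frame $\{\tau_2,\dots,\tau_{2n}\}$, and since $\gs_X\tau_i=\P\ss(\tsc_X\tau_i)$ and $\langle\nabla_X\tau_i,\tau_j\rangle=\langle\gs_X\tau_i,\tau_j\rangle$ for $\tau_j\in\HS$, what we must kill at $p$ are precisely the Christoffel symbols of $\gs$ in the chosen horizontal tangent frame.

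The concrete construction I would carry out is the standard one for making a frame \emph{synchronous} at a point. Start from any smooth adapted o.n. frame $\{\tilde\tau_1,\dots,\tilde\tau_{2n+1}\}$ along $S$ near $p$ with $\tilde\tau_1=\nn$ and $\tilde\tau_{2n+1}=T$, whose existence follows from smoothness of $\nn$ on the non-characteristic hypersurface $S$ together with the orthogonal splitting $\HH=\nn S\oplus\HS$. Writing the new frame as $\tau_i=\sum_{j\in I\ss}a_{ij}\,\tilde\tau_j$ for $i\in I\ss$, where $(a_{ij})$ is an $SO(2n-1)$-valued function on $S$ with $a_{ij}(p)=\delta_{ij}$, compute
\[
\langle\nabla_X\tau_i,\tau_j\rangle\big|_p=(Xa_{ij})(p)+\langle\nabla_X\tilde\tau_i,\tilde\tau_j\rangle\big|_p
\]
for $X\in\HH_pS$, using $a_{ij}(p)=\delta_{ij}$ and the skew-symmetry in $i,j$ inherited from orthonormality. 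Thus it suffices to choose the first-order jet of the matrix $(a_{ij})$ at $p$ so that $(Xa_{ij})(p)=-\langle\nabla_X\tilde\tau_i,\tilde\tau_j\rangle|_p$ for all $X$ ranging over the horizontal tangent directions $\tau_2(p),\dots,\tau_{2n}(p)$. Because the right-hand side is skew-symmetric in $(i,j)$ for each such $X$ (differentiate $\langle\tilde\tau_i,\tilde\tau_j\rangle=\delta_{ij}$), these prescribed directional derivatives lie in the Lie algebra $\mathfrak{so}(2n-1)$, so they are realized as the horizontal differential at $p$ of an $SO(2n-1)$-valued map with value $I$ at $p$; one simply writes $a_{ij}$ via the exponential of a matrix-valued function linear in suitable coordinates along $S$ near $p$.

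The main obstacle to watch is consistency of the prescribed directional derivatives: one must ensure the only constraints imposed are along the $(2n-1)$ horizontal tangent directions and that the skew-symmetry condition is genuinely met, since otherwise no orthogonal-matrix-valued frame change could absorb the ambient connection coefficients. This is handled precisely by the remark above that $X\mapsto\big(\langle\nabla_X\tilde\tau_i,\tilde\tau_j\rangle|_p\big)_{i,j\in I\ss}$ defines, for each $X$, an element of $\mathfrak{so}(2n-1)$, and the gauge freedom in $(a_{ij})$ is exactly the group $SO(2n-1)$ acting on the horizontal tangent frame; there is no obstruction because we are only prescribing a first-order jet at a single point $p$, which any Lie-group-valued map with the right value and differential at $p$ can meet. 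I would finally note that we never attempt to kill derivatives in the $\nn$ or $T$ directions, nor mixed terms involving $\tau_1$ or $\tau_{2n+1}$, consistent with the statement, which restricts both the slots $i,j$ to $I\ss$ and the derivation direction $X$ to $\HH_pS$; this is what makes the pointwise count work and keeps the construction purely algebraic once the ambient frame is in hand.
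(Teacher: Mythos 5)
The paper does not prove this lemma itself but defers to Lemma 3.8 of \cite{Monteb}; your argument is the standard ``synchronous frame'' construction and is correct: the gauge freedom is an $SO(2n-1)$-valued rotation of the horizontal tangent frame, the connection coefficients $\langle\nabla_X\tilde\tau_i,\tilde\tau_j\rangle|_p$ are skew-symmetric in $(i,j)$ by orthonormality, and prescribing the first-order jet of $\exp$ of an $\mathfrak{so}(2n-1)$-valued function at the single point $p$ kills them along the $(2n-1)$ directions of $\HH_pS$. This matches the approach one expects from the cited reference, so no comparison beyond that is needed.
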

For a proof, see Lemma 3.8. in \cite{Monteb}. We end this section
by stating some useful technical lemmata. In the next proofs we shall make  use of an adapted o.n. frame ${\mathcal F}$ along $S$.

\begin{lemma}\label{Doca}Under the previous assumptions, let us further suppose
 that $S$ has constant horizontal mean
curvature $\MS$. Then
\begin{equation*}\|B\cc\|^2\ngr=-\sum_{i\in
I\ss}\left\langle\gc_{\tau_i}\gc_{\tau_i}\nn,\nn\right\rangle,
\end{equation*}where $\|\cdot\|\ngr$ is the  \textquotedblleft Gram norm of a linear operator\textquotedblright; see \cite{Ch1}.\end{lemma}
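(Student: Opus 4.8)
\emph{Proof proposal.} The plan is to reduce everything to the first-order object $W\cc\tau_i=\gc_{\tau_i}\nn$ and then differentiate the unit-length constraint $|\nn|^2=1$ twice. Throughout I fix an adapted o.n. frame $\mathcal{F}=\{\tau_1,\ldots,\tau_{2n+1}\}$ along $S$ as in Definition \ref{movadafr}, so that $\tau_1|_S=\nn$, the vectors $\{\tau_j\}_{j\in I\ss}$ form an o.n. basis of $\HS$, and $\tau_{2n+1}=T$; if convenient one may further specialise the frame at the given point $p$ by Lemma \ref{Sple}, although this is not needed below.

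First I would record two consequences of the metric compatibility of $\gc$ with $h\cc$. Since $|\nn|^2\equiv 1$ on $S$ and each $\tau_i$ ($i\in I\ss$) is tangent to $S$, compatibility gives
\begin{equation*}\langle\gc_{\tau_i}\nn,\nn\rangle=\tfrac{1}{2}\,\tau_i|\nn|^2=0,\end{equation*}
so that $W\cc\tau_i=\gc_{\tau_i}\nn$ is a horizontal vector orthogonal to $\nn$, i.e. $W\cc\tau_i\in\HS$. Likewise, differentiating the identity $\langle\tau_1,\tau_j\rangle\equiv\delta_{1j}=0$ (for $j\in I\ss$) along $\tau_i$ and using compatibility yields
\begin{equation*}B\cc(\tau_i,\tau_j)=\langle\gc_{\tau_i}\tau_j,\nn\rangle=-\langle\gc_{\tau_i}\nn,\tau_j\rangle=-\langle W\cc\tau_i,\tau_j\rangle.\end{equation*}

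Next I would expand the Gram norm in this frame. By definition $\|B\cc\|^2\ngr=\sum_{i,j\in I\ss}B\cc(\tau_i,\tau_j)^2$; using the previous identity together with $W\cc\tau_i\in\HS$ and the fact that $\{\tau_j\}_{j\in I\ss}$ is an o.n. basis of $\HS$, the inner sum over $j$ collapses to $|W\cc\tau_i|^2$, so that
\begin{equation*}\|B\cc\|^2\ngr=\sum_{i,j\in I\ss}\langle W\cc\tau_i,\tau_j\rangle^2=\sum_{i\in I\ss}|W\cc\tau_i|^2=\sum_{i\in I\ss}|\gc_{\tau_i}\nn|^2.\end{equation*}
Finally I would differentiate the first displayed identity once more: applying $\tau_i$ to $\langle\gc_{\tau_i}\nn,\nn\rangle\equiv 0$ and using compatibility gives
\begin{equation*}0=\tau_i\langle\gc_{\tau_i}\nn,\nn\rangle=\langle\gc_{\tau_i}\gc_{\tau_i}\nn,\nn\rangle+|\gc_{\tau_i}\nn|^2,\end{equation*}
whence $|\gc_{\tau_i}\nn|^2=-\langle\gc_{\tau_i}\gc_{\tau_i}\nn,\nn\rangle$. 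Summing over $i\in I\ss$ and comparing with the previous display yields the claim.

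The computation is essentially routine once organised this way; the one point to watch is the second covariant derivative, namely that differentiating the pointwise constraint $\langle\gc_{\tau_i}\nn,\nn\rangle\equiv 0$ reproduces exactly the iterated term $\gc_{\tau_i}\gc_{\tau_i}\nn$ (the Leibniz rule applied to the vector field $\gc_{\tau_i}\nn$, with no further correction) together with the wanted $|\gc_{\tau_i}\nn|^2$; this is precisely what bridges the first-order Gram norm and the second-order expression on the right. I would also remark that, although the constant horizontal mean curvature hypothesis is in force, the identity above uses only the metric compatibility of $\gc$ and the structure of the adapted frame; I therefore expect the author either to invoke that hypothesis merely as the standing assumption of the section, or to employ it for a parallel divergence-form reformulation of the same quantity elsewhere.
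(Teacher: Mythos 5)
Your proposal is correct and follows essentially the same route as the paper: differentiate $\langle\nn,\nn\rangle=1$ twice along the adapted frame to get $\langle\gc_{\tau_i}\gc_{\tau_i}\nn,\nn\rangle=-|\gc_{\tau_i}\nn|^2$, then expand $|\gc_{\tau_i}\nn|^2$ over the o.n.\ basis $\{\tau_j\}_{j\in I\ss}$ of $\HS$ to recognise the Gram norm of $B\cc$. Your explicit remark that the constant-mean-curvature hypothesis is not actually used in the computation is also consistent with the paper's proof, which likewise never invokes it.
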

\begin{proof}Since
 $\langle\nn,\nn\rangle=1$ we get that $\left\langle\gc_{\tau_i}\nn,\nn\right\rangle=0$ for every $i\in I\ss$.
 Therefore
\begin{eqnarray*}\sum_{i\in
I\ss}\left\langle\gc_{\tau_i}\gc_{\tau_i}\nn,\nn\right\rangle &=&
- \sum_{i\in
I\ss}\left\langle\gc_{\tau_i}\nn,\gc_{\tau_i}\nn\right\rangle\\&=&-
\sum_{i,j,k\in
I\ss}\left\langle\gc_{\tau_i}\nn,{\tau_j}\right\rangle\left\langle\gc_{\tau_i}\nn,{\tau_k}\right\rangle\left\langle\tau_j,\tau_k\right\rangle\\&=&-
\sum_{i,j\in
I\ss}\left\langle\gc_{\tau_i}\nn,\tau_j\right\rangle^2\\&=&-\|B\cc\|^2\ngr.\end{eqnarray*}\noindent
\end{proof}

\begin{lemma}\label{fios}Under our previous assumptions, we have:\begin{itemize}\item[{\rm(i)}]
$\mathrm{Tr}\big(B\cc(\,\cdot\,, A\cc\,
 \cdot)\big)=\|A\cc\|^2\ngr=\frac{n-1}{2}\varpi^2;$\item[{\rm(ii)}]$\mathrm{Tr}\big(B\cc(\,\cdot\,, C^{2n+1}\ss\,
 \cdot)\big)=(n-1)\varpi$.\end{itemize}
\end{lemma}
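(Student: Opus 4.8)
The plan is to fix $p\in S$ and work in an adapted o.n.\ frame $\mathcal{F}=\{\tau_1,\dots,\tau_{2n+1}\}$ as in Definition \ref{movadafr}, so that $\{\tau_i\}_{i\in I\ss}$ is an orthonormal basis of $\HH_pS$ and $\tau_1=\nn$. Writing $C:=C^{2n+1}\cc$, the explicit block form of the structural matrix shows at once that $C$ is skew-symmetric and that $C^2=-\mathrm{Id}$ on $\HH$; consequently $C$ is orthogonal on $\HH$, i.e.\ $\langle CX,CY\rangle=-\langle X,C^2Y\rangle=\langle X,Y\rangle$ for all $X,Y\in\HH$. The decisive structural feature is that $\HS$ is \emph{not} $C$-invariant: since $\langle C\nn,\nn\rangle=0$, one has $C\nn=-\nn^\perp$ with $\nn^\perp\in\HH_pS$ a \emph{unit} vector. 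Hence $C^{2n+1}\ss=\P\ss\circ C|_{\HS}$ differs from $C|_{\HS}$ precisely by the component that $C$ pushes into the $\nn$-direction. I will also use the decomposition $B\cc=S\cc+A\cc$ into symmetric and skew parts, together with the quoted identity $A\cc=\tfrac12\varpi\,C^{2n+1}\ss$.

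The core computation, and the only genuinely quantitative step, is the evaluation of $\sum_{i\in I\ss}|C^{2n+1}\ss\tau_i|^2$. For each $i\in I\ss$ orthogonality of $C$ gives $|C\tau_i|^2=1$, while $C^{2n+1}\ss\tau_i=C\tau_i-\langle C\tau_i,\nn\rangle\,\nn$, so that
\[
|C^{2n+1}\ss\tau_i|^2=1-\langle C\tau_i,\nn\rangle^2=1-\langle\tau_i,\nn^\perp\rangle^2,
\]
where I used skew-symmetry, $\langle C\tau_i,\nn\rangle=-\langle\tau_i,C\nn\rangle=\langle\tau_i,\nn^\perp\rangle$. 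Summing over the $2n-1$ indices of $I\ss$ and invoking completeness in $\HH_pS$ for the unit vector $\nn^\perp\in\HH_pS$,
\[
\sum_{i\in I\ss}|C^{2n+1}\ss\tau_i|^2=(2n-1)-|\nn^\perp|^2=2(n-1).
\]
This is where the factor $n-1$ is produced: the single ``$-1$'' is exactly the length that $C$ leaks out of $\HS$ into $\R\nn$.

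For (ii) I expand $\mathrm{Tr}\big(B\cc(\cdot,C^{2n+1}\ss\cdot)\big)=\sum_{i\in I\ss}B\cc(\tau_i,C^{2n+1}\ss\tau_i)$ via $B\cc=S\cc+A\cc$. The symmetric part contributes $\mathrm{Tr}(S\cc\,C^{2n+1}\ss)$, which vanishes because the trace of the product of a self-adjoint operator with the skew-adjoint $C^{2n+1}\ss$ is zero. The skew part, using $A\cc=\tfrac12\varpi\,C^{2n+1}\ss$, gives
\[
\sum_{i\in I\ss}\big\langle A\cc\tau_i,\,C^{2n+1}\ss\tau_i\big\rangle=\tfrac12\varpi\sum_{i\in I\ss}|C^{2n+1}\ss\tau_i|^2=(n-1)\varpi,
\]
which is (ii). Part (i) then follows with no further geometry: since $A\cc=\tfrac12\varpi\,C^{2n+1}\ss$, linearity in the second slot yields $\mathrm{Tr}\big(B\cc(\cdot,A\cc\cdot)\big)=\tfrac12\varpi\,\mathrm{Tr}\big(B\cc(\cdot,C^{2n+1}\ss\cdot)\big)=\tfrac{n-1}{2}\varpi^2$, and the Gram norm is computed by the same sum, $\|A\cc\|^2\ngr=\tfrac14\varpi^2\|C^{2n+1}\ss\|^2\ngr=\tfrac14\varpi^2\sum_{i\in I\ss}|C^{2n+1}\ss\tau_i|^2=\tfrac{n-1}{2}\varpi^2$, so all three expressions coincide.

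The only real obstacle is the bookkeeping in the second display: one must resist treating $C^{2n+1}\ss$ as though it were orthogonal on $\HS$, and instead correctly subtract the $\nn$-component that $C$ introduces, which is where the dependence on $n$ enters. Everything else reduces to the elementary linear-algebra fact that symmetric and skew parts do not interact under the trace, so that only $A\cc$ survives in both statements.
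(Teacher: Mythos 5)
Your proof is correct, and it takes a route that is genuinely different from (and in one respect more self-contained than) the paper's. The paper first shows $\mathrm{Tr}\big(S\cc(\,\cdot\,,A\cc\,\cdot)\big)=0$ by an explicit computation with $B\cc$ and $B\cc^\ast$ (expanding $\langle S\cc\tau_i,A\cc\tau_i\rangle$ and recognizing the sum as $\tfrac14(\|B\cc\|^2\ngr-\|B^\ast\cc\|^2\ngr)=0$), then obtains (i) by \emph{quoting} the identity $\|A\cc\|^2\ngr=\tfrac{n-1}{2}\varpi^2$ from \cite{Monteb}, Example 4.11, and finally deduces (ii) from (i) via $A\cc=\tfrac12\varpi\,C^{2n+1}\ss$. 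You reverse the logical order and, more importantly, you actually \emph{prove} the quoted norm identity: the observation that $C^{2n+1}\cc$ is orthogonal on $\HH$ and that exactly one unit of squared norm leaks into the $\nn$-direction (since $\nn^\perp\in\HS$ is a unit vector) gives $\sum_{i\in I\ss}|\P\ss C^{2n+1}\cc\tau_i|^2=(2n-1)-1=2(n-1)$, which is precisely the content of the cited example. Your replacement of the paper's computation of $\mathrm{Tr}\big(S\cc(\,\cdot\,,A\cc\,\cdot)\big)=0$ by the standard fact that the trace of a self-adjoint operator composed with a skew-adjoint one vanishes is cleaner and equivalent. One small point of bookkeeping: the paper literally defines $C^{2n+1}\ss$ as the restriction $C^{2n+1}\cc|_{\HS}$, whereas you read it as $\P\ss\circ C^{2n+1}\cc|_{\HS}$; your reading is the operationally correct one (it is forced by $\nn^\perp\in\mathrm{Ker}\,A\cc$ and by the fact that traces and Gram norms over $\HS$ only see the $\HS$-component), so this does not affect the result. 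In short: same statement, a more elementary and self-contained derivation on your side, at the cost of unpacking the structure of $C^{2n+1}\cc$ explicitly.
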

\begin{proof}We claim that $\mathrm{Tr}\big(S\cc(\,\cdot\,, A\cc\,
 \cdot)\big)=0$. In order to prove this identity we compute
 \begin{eqnarray*}\langle S\cc\tau_i,A\cc\tau_i\rangle&=&
 \frac{1}{4}\left\langle \left(B\cc+B^\ast\cc\right)\tau_i, \left(B\cc-B^\ast\cc\right)\tau_i\right\rangle\\&=& \frac{1}{4}\left\{\langle B\cc\tau_i, B\cc\tau_i\rangle -\langle B^\ast\cc\tau_i,
 B^\ast\cc\tau_i\rangle+\underbrace{\langle B\cc\tau_i, B^\ast\cc\tau_i\rangle -\langle B^\ast\cc\tau_i,
 B\cc\tau_i\rangle}_{=0}
\right\}\\
 &=& \frac{1}{4}\big\{\langle B^\ast\cc B\cc\tau_i, \tau_i\rangle -\langle B\cc B^\ast\cc\tau_i,
 \tau_i\rangle \big\}\\&=&\frac{1}{4}\big\{\langle B\cc\tau_i, B\cc\tau_i\rangle -\langle B^\ast\cc\tau_i,
 B^\ast\cc\tau_i\rangle \big\}\end{eqnarray*}for any $i\in
 I\ss$.  Summing up over $i\in I\ss$ yields
\[\mathrm{Tr}\big(S\cc(\,\cdot\,, A\cc\,
 \cdot)\big)=\|B\cc\|^2\ngr-\|B^\ast\cc\|^2\ngr=0\] and the claim follows.
Now let us compute
\begin{eqnarray*}\mathrm{Tr}\big(B\cc(\,\cdot\,, A\cc\,
 \cdot)\big)=\sum_{i\in I\ss}\langle (S\cc+ A\cc)\tau_i,
 A\cc\tau_i\rangle=\sum_{i\in I\ss}\langle A\cc\tau_i,
 A\cc\tau_i\rangle=\|A\cc\|^2\ngr
 =\frac{n-1}{2}\,\varpi^2.\end{eqnarray*}This proves (i).
Finally, (ii) follows from (i) by using the identity
$A\cc=\frac{1}{2} \varpi C^{2n+1}\ss$.\end{proof}

In the preceding proof we have used the identity $\|A\cc\|^2\ngr
 =\frac{n-1}{2}\,\varpi^2$; see \cite{Monteb}, Example 4.11, p.
 470. This identity can easily be proved by making use of
 an adapted o.n. frame  ${\mathcal F}$ along $S$.
 Furthermore, we observe that  $\nn^\perp\in {\rm Ker} A\cc$, where
$\nn^\perp=-C^{2n+1}\cc\nn$.

\begin{oss}\label{fgh}The following  holds$$\langle W\cc X, Y\rangle-\langle W\cc Y, X\rangle=-\varpi\langle C\ss^{2n+1}X,Y\rangle\qquad\mbox{for every}\,\, X, Y\in\XX^1\ss.$$The proof of this identity uses the fact that the bracket of tangent vector fields is tangent to $S$.\end{oss}

\begin{lemma}\label{fgh2}For every $ X, Y\in\XX^1\ss$ one has $S\cc(X, Y)=- \langle W\cc X,Y\rangle-\dfrac{\varpi}{2}\langle C\ss^{2n+1}X,Y\rangle$.\end{lemma}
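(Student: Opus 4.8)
The plan is to reduce the identity to the skew-symmetry relation of Remark~\ref{fgh}. First I would rewrite the horizontal second fundamental form $B\cc$ through the operator $W\cc$. Since the $\HH$-connection $\gc$ is compatible with the sub-Riemannian metric $h\cc$, and since every $Y\in\XX^1\ss$ is orthogonal to the horizontal unit normal $\nn$, differentiating the identity $\langle Y,\nn\rangle\equiv 0$ along any $X\in\XX^1\ss$ gives
$$0=X\langle Y,\nn\rangle=\langle\gc_X Y,\nn\rangle+\langle Y,\gc_X\nn\rangle,$$
whence $B\cc(X,Y)=\langle\gc_X Y,\nn\rangle=-\langle Y,W\cc X\rangle=-\langle W\cc X,Y\rangle$.

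Next I would take the symmetric part. As $S\cc$ is by definition the symmetric part of $B\cc$, one has $S\cc(X,Y)=\tfrac12\big(B\cc(X,Y)+B\cc(Y,X)\big)$; substituting the previous identity yields
$$S\cc(X,Y)=-\tfrac12\big(\langle W\cc X,Y\rangle+\langle W\cc Y,X\rangle\big).$$

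Finally I would invoke Remark~\ref{fgh}, namely $\langle W\cc X,Y\rangle-\langle W\cc Y,X\rangle=-\varpi\langle C\ss^{2n+1}X,Y\rangle$. Solving for $\langle W\cc Y,X\rangle$ and inserting it above collapses the two $W\cc$-terms into a single one and produces exactly $S\cc(X,Y)=-\langle W\cc X,Y\rangle-\tfrac{\varpi}{2}\langle C\ss^{2n+1}X,Y\rangle$, as claimed.

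The argument is purely algebraic once the two inputs are available, and there is no serious obstacle. The only point deserving care is the first step: the differentiation formula relies on the metric compatibility of $\gc$ rather than of the full Levi-Civita connection, so one must check that all three vector fields $X$, $Y$, $\nn$ are horizontal, which indeed holds since $X,Y\in\HS\subset\HH$ and $\nn\in\HH$. As a consistency check, the same computation applied to the skew part recovers $A\cc(X,Y)=\tfrac{\varpi}{2}\langle C\ss^{2n+1}X,Y\rangle$, in agreement with the stated identity $A\cc=\tfrac12\varpi\,C^{2n+1}\ss$.
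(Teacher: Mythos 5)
Your proposal is correct and follows essentially the same route as the paper: the paper likewise starts from the definition of $S\cc$ as the symmetric part of $B\cc$, uses compatibility of $\gc$ with the metric to convert $\langle\gc_XY,\nn\rangle$ into $-\langle W\cc X,Y\rangle$, and then applies Remark~\ref{fgh} to merge the two terms. The only difference is cosmetic (you rewrite $B\cc$ first and symmetrize afterwards, while the paper symmetrizes first), and your closing consistency check on $A\cc$ is a harmless bonus.
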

\begin{proof}Let $X, Y\in\XX^1\ss$ and compute \begin{eqnarray*}S\cc(X, Y)&=&\dfrac{1}{2}\left(\langle\gc_XY,\nn\rangle+\langle\gc_YX,\nn\rangle\right)\qquad\mbox{(by definition of $S\cc$)}\\
&=&-\dfrac{1}{2}\left(\langle\gc_X\nn,Y\rangle+\langle\gc_Y\nn,X\rangle\right)\qquad\mbox{(compatibility of $\gc$ with the metric $\langle\cdot,\cdot\rangle$)}\\&=&-\dfrac{1}{2}\left(\langle W\cc X,Y\rangle+\langle W\cc Y,X\rangle\right)\\&=&-\dfrac{1}{2}\left(2\langle W\cc X,Y\rangle+\varpi\langle C\ss^{2n+1}X,Y\rangle\right)\qquad\mbox{(by Remark \ref{fgh})}\\&=&- \langle W\cc X,Y\rangle-\dfrac{\varpi}{2}\langle C\ss^{2n+1}X,Y\rangle.\end{eqnarray*}\end{proof}
\begin{oss}[Characteristic direction and CC-geodesics]\label{njn} Let $S$ be a smooth hypersurface and assume that there exists a  CC-geodesic  $\gamma:]-\epsilon, \epsilon[\longrightarrow\mathbb H^n$ such that $\gamma\subset S$ and $ \dfrac{d{\gamma}}{ds}=\nn^\perp(\gamma),\,s\in]-\epsilon, \epsilon[$; see also Remark \ref{ossgeoip}.
As a consequence $$\dfrac{d\nn^\perp}{ds} =-
\lambda C^{2n+1}\cc\nn^\perp=-\lambda \nn \qquad\mbox{for every $s\in]-\epsilon, \epsilon[$}$$for some constant $\lambda$. It follows that at each point of $\gamma\cap S$ one must have $\gc_{\nn^\perp}{\nn^\perp}=-\lambda \nn$ or,
in other words, this shows that $S\cc(\nn^\perp,\nn^\perp)=-\lambda$.\end{oss}

\subsection{Homogeneous measure on $\partial S$ and horizontal integration by parts}\label{IBPAA}

Let $S\subset\mathbb{H}^n$ be a $\cont^2$-smooth compact
hypersurface with boundary. Let
$\partial S$ be a $(2n-1)$-dimensional (piecewise)  $\cont^1$-smooth
manifold, oriented by its unit normal vector $\eta\in\TT S$, and denote by $\sigma^{2n-1}\rr$  the Riemannian measure on $\partial S$ defined by setting $\sigma^{2n-1}\rr\res{\partial S}=(\eta\LL\sigma^{2n}\rr)|_{\partial S}$. Note  that
$$(X\LL\perh)|_{\partial S}=\langle X, \eta\rangle
  |\PH\nu|\, \sigma^{2n-1}\rr\res{\partial S}$$for every $X\in \cont(S, \TS).$
The
  {\it
characteristic set} $C_{\partial S}$ is defined
as $C_{\partial S}:=\{p\in{\partial S}: |\PH\nu||\P\ss\eta|=0\}$.
The {\it unit $\HS$-normal} along $\partial S$ is given by
$\eta\ss:=\frac{\P\ss\eta}{|\P\ss\eta|}$. As for the
$\HH$-perimeter measure, we define a homogeneous measure
${\sigma^{2n-1}\ss}$ along $\partial S$ by setting
 $${{\sigma^{2n-1}\ss}}\res{\partial S}:=
 \left(\eta\ss\LL\perh\right)\big|_{\partial S}.$$
We have $\sigma^{2n-1}\ss\res{\partial S}=
|\PH\nu|\,|\P\ss\eta|\,\sigma^{2n-1}\rr\res{\partial S}$; furthermore
$(X\LL\perh)|_{\partial S}=\langle X, \eta\ss\rangle\,
  {\sigma^{2n-1}\ss}\res{\partial S}$ {for every} $X\in\XX^0\ss$.

\begin{Defi}[Horizontal tangential operators]\label{Deflh} For simplicity, let us assume that $S\subset \mathbb H^n$ is non-characteristic. Later on, we shall denote by $\lg:\XX^1\ss\longrightarrow\cont(S)$ be
the 1st order
differential operator given by
\begin{eqnarray*}\lg(X):=\div\ss X + \varpi\langle C^{2n+1}\cc\nn,
X\rangle=\div\ss X -\varpi\langle\nn^{\perp}, X\rangle\qquad
\mbox{for every}\,\,X\in\XX^1\ss.\end{eqnarray*}Moreover, we shall denote by $\lh:\cont\ss^2(S)\longrightarrow\cont(S)$  the 2nd order differential operator given by
\begin{eqnarray*}\lh\varphi:=\lg(\qq \varphi)=\Delta\ss\varphi -\varpi\frac{\partial\varphi}{\partial\nn^\perp}
\qquad\mbox{for every}\,\,\varphi\in\cont^2\ss(S).\end{eqnarray*}
\end{Defi}
Note that, in the characteristic case, the operators  $\lg$
and $\lh$ are not  defined at $C_S$. The next integral  formula was formerly
proved for non-characteristic hypersurfaces but it holds true even in case of non-empty
characteristic sets; see \cite{Monte,
Monteb} and \cite{Monted}.

\begin{teo}\label{GD}Let $S\subset\mathbb{H}^n$ be a $\cont^2$-smooth
compact hypersurface with piecewise $\cont^1$-smooth boundary
$\partial S$. If $n=1$ assume further that $C_S$ is contained in a
finite union of $\cont^1$-smooth horizontal curves. Then
\[\int_{S}\lg(X)\,\perh=-\int_{S}\MS\langle X, \nn\rangle\,\perh +
\int_{\partial S}\langle
X,\eta\ss\rangle\,{{\sigma^{2n-1}\ss}}\qquad\mbox{for
every}\,\,X\in\XX^1\cc=\cont^1(\mathbb H^n, \HH).\]\end{teo}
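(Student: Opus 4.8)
The plan is to derive this as a sub-Riemannian analogue of the Riemannian divergence theorem on a hypersurface, by comparing the operator $\lg$ against the ordinary tangential divergence $\div\ts$ on $S$ (with respect to the induced Riemannian metric) and then invoking the classical divergence theorem for $\div\ts$. The key observation is that $\lg(X) = \div\ss X - \varpi\langle\nn^\perp, X\rangle$ is built from the \emph{horizontal} tangential divergence $\div\ss$, which traces $\gs$ only over the horizontal tangent directions $\HH_pS$, whereas the full Riemannian tangential divergence $\div\ts$ on the $2n$-dimensional manifold $S$ traces over \emph{all} of $\TT_pS$, including the extra non-horizontal tangent direction spanned (in an adapted frame) by a combination of $T$ and $\nn^\perp$. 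So the first step is to fix an adapted o.n.\ frame $\{\tau_1,\dots,\tau_{2n+1}\}$ along $S$ as in Definition \ref{movadafr}, write out $\div\ts X$ for $X\in\XX^1\cc$, and isolate precisely the terms by which $\div\ts X$ exceeds $\div\ss X$. I expect the defect to account for exactly the curvature term $-\MS\langle X,\nn\rangle$ together with the torsion/contact term $\varpi\langle\nn^\perp,X\rangle$ appearing in the definition of $\lg$.

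Concretely, I would proceed as follows. First, for $X\in\XX^1\cc$ decompose $X = \langle X,\nn\rangle\nn + X\ss$ along $S$, where $X\ss=\P\ss X\in\HS$ is the horizontal tangential part. Second, I would relate the Riemannian surface divergence of $X$ to $\div\ss X\ss$. In computing $\div\ts X = \sum_{i\in I\ss}\langle\tsc_{\tau_i}X,\tau_i\rangle + \langle\tsc_{\tau_{2n+1}'}X,\tau_{2n+1}'\rangle$, where $\tau_{2n+1}'$ is the remaining unit tangent vector completing an o.n.\ basis of $\TT_pS$, the horizontal sum reproduces $\div\ss X\ss$ up to terms coming from the normal component $\langle X,\nn\rangle\nn$. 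Those normal-component terms yield $\langle X,\nn\rangle\sum_{i\in I\ss}\langle\gc_{\tau_i}\nn,\tau_i\rangle = -\langle X,\nn\rangle\,\MS$ via $B\cc(\tau_i,\tau_i)=\langle\gc_{\tau_i}\nn,\tau_i\rangle$ and $\MS=\mathrm{Tr}\,B\cc$. The extra non-horizontal tangential direction contributes the remaining piece, which I expect to collapse (using $T$'s bracket relations, the structure of $\varpi=\nu_T/|\PH\nu|$, and Remark \ref{fgh}) precisely into the $+\varpi\langle\nn^\perp,X\rangle$ correction that converts $\div\ss X$ into $\lg(X)$. Third, having shown an identity of the schematic form $\div\ts X = \lg(X) + \MS\langle X,\nn\rangle + (\text{terms integrating to the boundary})$, I would integrate over $S$ and apply the classical Riemannian divergence theorem, $\int_S \div\ts X\,\sigma^{2n}\rr = \int_{\partial S}\langle X,\eta\rangle\,\sigma^{2n-1}\rr$, after accounting for the density $|\PH\nu|$ that converts $\sigma^{2n}\rr$ into $\perh$ and $\sigma^{2n-1}\rr$ into $\sigma^{2n-1}\ss$ via the identities $\perh\res S=|\PH\nu|\,\sigma^{2n}\rr$ and $\sigma^{2n-1}\ss\res\partial S=|\PH\nu||\P\ss\eta|\,\sigma^{2n-1}\rr$. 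The boundary term $\int_{\partial S}\langle X,\eta\ss\rangle\,\sigma^{2n-1}\ss$ should then emerge from $(X\LL\perh)|_{\partial S}=\langle X,\eta\ss\rangle\,\sigma^{2n-1}\ss$ recorded just before Definition \ref{Deflh}.

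The main obstacle, and the step demanding genuine care, is the treatment of the characteristic set in the case $n=1$, where the hypothesis that $C_S$ lies in a finite union of $\cont^1$-smooth horizontal curves is invoked. Away from $C_S$ the computation above is a clean pointwise identity followed by Riemannian Stokes, but $\lg$ and $\varpi$ blow up at characteristic points; the density $|\PH\nu|$ vanishes there, and one must verify that no spurious boundary contribution arises from $C_S$ and that the integrals remain absolutely convergent. I would handle this by excising a tubular $\epsilon$-neighborhood of $C_S$, applying the non-characteristic version of the formula on $S\setminus C_S^\epsilon$, and then passing to the limit $\epsilon\to 0$. The convergence hinges on controlling the flux of $X$ through $\partial(C_S^\epsilon)$: since $\varpi\in L^1_{loc}(S,\perh)$ and $X$ is bounded, the dominant estimate should come from the fact that $\perh\res C_S=0$ together with the one-dimensional (horizontal-curve) structure of $C_S$, which makes the measure of the $\epsilon$-tube shrink faster than any blow-up of $\varpi$. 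Establishing that the boundary flux over the excision sphere tends to zero is where the stated hypothesis on $C_S$ is essential, and it is the delicate part of the argument; the non-characteristic core is essentially a bookkeeping exercise in the adapted frame.
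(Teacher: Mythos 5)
Your strategy is essentially the one the paper follows: the non-characteristic identity is not reproved here (it is quoted from \cite{Monte, Monteb}), and the characteristic set is handled exactly by excising the family $\UU_\epsilon$ with the properties (i)--(iii) listed after Corollary \ref{Properties} and passing to the limit. Two points in your sketch need sharpening. First, the pointwise identity cannot take the schematic form $\div\ts X=\lg(X)+\MS\langle X,\nn\rangle+\dots$ with the density $|\PH\nu|$ inserted only when you convert $\sigma^{2n}\rr$ into $\perh$ at the integration stage: since $|\PH\nu|$ is non-constant, the weight must sit \emph{inside} the tangential divergence. The identity actually needed is, for the horizontal tangential part $Y:=\P\ss X\in\XX^1\ss$, that $d\left(Y\LL\perh\right)\big|_S=\lg(Y)\,\perh$, equivalently $\div\ts\left(|\PH\nu|\,Y\right)=|\PH\nu|\,\lg(Y)$; the classical divergence theorem applied to the tangent field $|\PH\nu|\,Y$ then gives $\int_{\partial S}\langle Y,\eta\rangle\,|\PH\nu|\,\sigma^{2n-1}\rr=\int_{\partial S}\langle X,\eta\ss\rangle\,\sigma^{2n-1}\ss$, while the normal component of $X$ contributes purely algebraically, with no boundary term, via $\lg(X)=\lg(Y)-\MS\langle X,\nn\rangle$. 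Second, in the limit $\epsilon\rightarrow 0^+$ there is no need to balance the blow-up of $\varpi$ against the size of the tube: the interior integrand has bounded density with respect to $\sigma^{2n}\rr$, because $\varpi\,\langle\nn^\perp,X\rangle\,\perh=\nu_T\,\langle\nn^\perp,X\rangle\,\sigma^{2n}\rr$, because $\div\ss X=\div\cc X-\langle\gc_{\nn} X,\nn\rangle$ is bounded for $X\in\cont^1(\mathbb H^n,\HH)$, and because $\MS\,|\PH\nu|$ is controlled by \eqref{gghh}; hence property (ii) alone disposes of the interior, and the inner boundary flux is bounded by $\|X\|_{L^\infty}\int_{\partial\UU_\epsilon}|\PH\nu|\,\sigma^{2n-1}\rr$, which is exactly property (iii). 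The extra hypothesis on $C_S$ for $n=1$ is used only to guarantee that a family $\UU_\epsilon$ with these three properties exists.
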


Note that, if $X\in\XX^1\ss$  the first integral on the right hand side vanishes and, in this case, the formula is referred as \textquotedblleft horizontal divergence formula\textquotedblright.
We collect below some useful Green's formulas for the operator $\lh$.

\begin{corollario}\label{Properties}Let $S\subset\mathbb{H}^n$ be a $\cont^2$-smooth compact
hypersurface with piecewise $\cont^1$-smooth boundary $\partial
S$. If $n=1$ assume further that $C_S$ is contained in a finite
union of $\cont^1$-smooth horizontal curves. Under the previous
notation, the following hold:\begin{itemize} \item[{\rm(i)}]
$\int_{S}\lh \varphi\,\perh=0$ for every compactly supported
$\varphi\in\cont^2\ss(S)$; \item[ {\rm(ii)}]$\int_{S}\lh
\varphi\,\perh=\int_{\partial
S}{\partial\varphi}/{\partial\eta\ss}\,\sigma^{2n-1}\ss$ for every
$\varphi\in\cont^2\ss(S)$;\item[{\rm(iii)}]$\int_{S}\psi\,\lh
\varphi\,\perh=\int_{S}\varphi\,\lh\psi \,\perh$  for every
compactly supported $\varphi,\,\psi\in\cont^2\ss(S)$;
\item[{\rm(iv)}]$\int_{S}(\psi\,\lh \varphi-\varphi\,\lh\psi)
\,\perh=\int_{\partial
S}(\psi{\partial\varphi}/{\partial\eta\ss}-\varphi{\partial\psi}/{\partial\eta\ss})
\,\sigma^{2n-1}\ss$ for every
$\varphi,\,\psi\in\cont^2\ss(S)$;\item[
{\rm(v)}]$\int_{S}\psi\,\lh
\varphi\,\perh=-\int_{S}\langle\qq\varphi,\qq\psi\rangle\,\perh +
\int_{\partial
S}\psi{\partial\varphi}/{\partial\eta\ss}\,\sigma^{2n-1}\ss$
 for every
$\varphi,\,\psi\in\cont^2\ss(S)$;\item[{\rm(vi)}]$\int_{S}\lh
(\varphi^2)\,\perh=2\int_{S}\varphi\lh
\varphi\,\perh+2\int_{S}|\qq\varphi|^2\,\perh =
\int_{\partial S}{\partial\varphi^2}/{\partial\eta\ss}\,\sigma^{2n-1}\ss$
for every $\varphi\in\cont^2\ss(S)$.
\end{itemize}
\end{corollario}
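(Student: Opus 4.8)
The plan is to derive all six identities from a single master tool: the \emph{horizontal divergence formula}, that is, Theorem \ref{GD} applied to tangent horizontal fields $X\in\XX^1\ss$. For such $X$ one has $\langle X,\nn\rangle=0$, so the mean-curvature term drops out and the theorem reduces to $\int_S\lg(X)\,\perh=\int_{\partial S}\langle X,\eta\ss\rangle\,\sigma^{2n-1}\ss$, exactly as noted in the remark following Theorem \ref{GD}. Each of (i)--(vi) is then obtained by feeding a suitable field built from $\qq\varphi$ into this formula.

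The only algebra needed beforehand is two Leibniz-type identities. First, a product rule for $\lg$: for $\psi\in\cont^1\ss(S)$ and $X\in\XX^1\ss$,
\[\lg(\psi X)=\psi\,\lg(X)+\langle\qq\psi,X\rangle.\]
This follows from the corresponding Leibniz rule $\div\ss(\psi X)=\psi\,\div\ss X+\langle\qq\psi,X\rangle$ — a direct trace computation over an adapted o.n.\ frame, using $\gs_Y(\psi X)=(Y\psi)X+\psi\,\gs_YX$ — together with the definition $\lg(X)=\div\ss X-\varpi\langle\nn^\perp,X\rangle$, whose extra term is linear over functions. Second, the chain rule $\qq(\varphi^2)=2\varphi\,\qq\varphi$, immediate from $\tau_j(\varphi^2)=2\varphi\,\tau_j\varphi$. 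I also record that $\langle\qq\varphi,\eta\ss\rangle=\partial\varphi/\partial\eta\ss$ by the defining property of the $\HS$-gradient, since $\eta\ss\in\HS$.

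With these in hand the derivations are short. Taking $X=\qq\varphi$ gives $\lg(\qq\varphi)=\lh\varphi$ by definition of $\lh$; the divergence formula then yields (ii), and (i) is its special case when $\varphi$ is compactly supported in $S\setminus\partial S$, so that the boundary integral vanishes. For the master Green identity (v), I take $X=\psi\qq\varphi$ and apply the product rule, obtaining $\lg(\psi\qq\varphi)=\psi\,\lh\varphi+\langle\qq\psi,\qq\varphi\rangle$; the boundary integrand becomes $\psi\,\partial\varphi/\partial\eta\ss$, and rearranging gives (v). Identity (iii) is (v) for compactly supported $\varphi,\psi$ (vanishing boundary term), whose right-hand side $-\int_S\langle\qq\varphi,\qq\psi\rangle\,\perh$ is symmetric in $\varphi,\psi$; (iv) is the difference of (v) with the roles of $\varphi,\psi$ interchanged, the interior gradient terms cancelling. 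Finally (vi) follows by computing pointwise $\lh(\varphi^2)=\lg(2\varphi\,\qq\varphi)=2\varphi\,\lh\varphi+2|\qq\varphi|^2$ via the chain and product rules, integrating, and then invoking (ii) with $\varphi^2$ in place of $\varphi$ to identify the boundary term.

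The one point that requires care — and the main obstacle — is the behavior at the characteristic set $C_S$, where $\lg$, $\lh$ and $\varpi$ are undefined and $\varpi$ is unbounded. This is precisely the content already absorbed into Theorem \ref{GD}, which holds for non-empty $C_S$ (under the extra hypothesis on $C_S$ when $n=1$). Since $\varphi\in\cont^2\ss(S)$ keeps $\qq\varphi$ continuous up to $C_S$, the fields $\qq\varphi$, $\psi\qq\varphi$ and $\qq(\varphi^2)$ all remain admissible in that theorem, while $\varpi\in L^1_{loc}(S,\perh)$ and $\perh\res C_S=0$ guarantee that every integral above converges. Thus no separate analysis near $C_S$ is needed beyond what Theorem \ref{GD} already provides.
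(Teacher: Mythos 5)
Your proposal is correct and follows essentially the same route as the paper: all six identities are read off from Theorem \ref{GD} specialized to horizontal tangent fields (the ``horizontal divergence formula''), via the Leibniz rule $\lg(\psi X)=\psi\,\lg(X)+\langle\qq\psi,X\rangle$ and the choices $X=\qq\varphi$, $X=\psi\,\qq\varphi$, $X=\qq(\varphi^2)$. The paper handles the characteristic set by the same covering/dominated-convergence argument with the sets $\UU_\epsilon$ that underlies its extension of Theorem \ref{GD}, so your decision to absorb the analysis near $C_S$ into that theorem (using $\varpi\in L^1_{loc}(S,\perh)$ and the continuity of the $\HS$-derivatives of $\varphi$ up to $C_S$) matches the intended proof.
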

The proof of the characteristic case follows from the
non-characteristic one by dominated convergence  together with some elementary estimates. The starting point of the proof is to cover the
characteristic set by a family of subsets
$\{\UU_\epsilon\}_{\epsilon\geq 0}$ such that:
(i) ${\rm Car}_S\Subset\UU_\epsilon$ for every
$\epsilon>0$;\,\, (ii) $\sigma^{2n}\rr(\UU_\epsilon)\longrightarrow 0$
for $\epsilon\rightarrow 0^+$; \,\, (iii)
$\int_{\partial\UU_\epsilon}|\PH\nu|\,\sigma^{2n-1}\rr\longrightarrow
0$ for $\epsilon\rightarrow 0^+$; see also
\cite{Monted}. It is not difficult to see that, under the previous
assumptions, such a family does exist. Later on this idea will be
used in order to extend the variational
formulas for the $\HH$-perimeter measure proved in \cite{Monte, Monteb}, to characteristic hypersurfaces.

\begin{oss}A simple way to state Stokes formula is the following: \begin{enumerate}
 \item [] \rm Let $M$ be an oriented $k$-dimensional manifold of class $\cont^2$ with boundary
$\partial M$. Then $$\int_M d\alpha=\int_{\partial M}\alpha$$for every compactly supported $(k-1)$-form $\alpha$ of class $\cont^1$.
\end{enumerate}
Without much effort, it is possible to extend  this formula
to the case where:\begin{center}$(\star)$\quad \rm $M$ is of class $\cont^1$ and $\alpha$ is a $(k-1)$-form such that $\alpha$ and $d\alpha$ are continuous.

\end{center}\it

For a more detailed discussion see \cite{Taylor}.

\end{oss}

The  previous condition $(\star)$ can be used  to extend the previous formulas to vector fields (and functions)  possibly singular at the characteristic set $C_S$. So let $S\subset\mathbb{H}^n$
 be a $\cont^2$-smooth hypersurface  with (piecewise) $\cont^1$-smooth boundary $\partial S$ and let $X\in\cont^1(S\setminus C_S, \HS)$. Set $$\alpha_X:=(X\LL \perh)|_S.$$
Then, condition $(\star)$  requires that $\alpha_X$ and $d\alpha_X$ be continuous on $S$. Note that $X$ is of class $\cont^1$ out of $C_S$ but may be singular at $C_S$.
For later purposes, we also define the space of \textquotedblleft admissible\textquotedblright functions for the horizontal Green's
 formulas (iii)-(vi) of Corollary \ref{Properties}.
\begin{Defi}\label{fi} Let $X\in\cont^1(S\setminus C_S, \HS)$ and set
 $\alpha_X:=(X\LL \perh)|_S$. We say that $X$ is \rm admissible  (for the horizontal divergence formula)  \it if, and only if, the differential forms $\alpha_X$ and $d\alpha_X$ are continuous on all of $S$. We say that  $\varphi\in\cont^2\ss(S\setminus C_S)$  is \rm admissible (for the  horizontal Green's formulas (iii)-(vi) stated in Corollary \ref{Properties}) \it if, and only if, $\psi\,\qq \varphi$ is admissible (for the horizontal divergence formula)   for every $\psi\in\cont^2\ss(S\setminus C_S)$ such that  $\psi\,\qq \psi$ is admissible (for the horizontal divergence formula). We shall denote by $\varPhi(S)$ the  space of all admissible functions.
\end{Defi}

\section{Isoperimetric Profiles}\label{Sez3}

\begin{oss}[CC-geodesics and Isoperimetric Profiles]\label{ossgeoip} By definition, CC-geodesics are
horizontal curves which minimize the CC-distance. In Heisenberg
groups,  they are obtained by solving the following system of O.D.E.s:
\begin{eqnarray}\label{ngsh1}\left\{\begin{array}{ll}\dot{\gamma}=P\cc\\\dot{P}\cc=-
P_{2n+1}C^{2n+1}\cc
P\cc\\\dot{P}_{2n+1}=0.\end{array}\right.\end{eqnarray}Equivalently, the 2nd equation  to solve is given by
$\dot{P}\cc=P_{2n+1} P^\perp\cc$, where
$P\cc=(P_1,...,P_{2n})^{\rm Tr}$, $|P\cc|=1$. The quantity $P_{2n+1}$ turns
out to be a constant parameter along $\gamma$. The vector
$P=(P\cc, P_{2n+1})\in\R^{2n+1}$ can be regarded as a vector of ``Lagrangian
multipliers''. Solutions of \eqref{ngsh1} are called {\rm normal
CC-geodesics}. We stress that \eqref{ngsh1} can be deduced by
 minimizing the constrained Lagrangian
$\mathbf{L}(t,\gamma,\dot{\gamma})=|\dot{\gamma}\cc| +
P_{2n+1}\theta(\dot{\gamma})$; see \cite{Montegv} and references
therein, or \cite{Montgomery}. Unlike the Riemannian case,
CC-geodesics in $\mathbb{H}^n$ depend not only on the initial
point $\gamma(0)$ and on the initial direction $P\cc(0)$, but also on
the parameter $P_{2n+1}$. Now if $P_{2n+1}=0$, CC-geodesics are
Euclidean horizontal lines. Furthermore, if $P_{2n+1}\neq 0$,
any CC-geodesic turns out to be a
\textquotedblleft helix\textquotedblright\footnote{If $n=1$, $\gamma(t)$ is a circular helix with axis
parallel to the vertical direction $T$ and whose slope depends on
$P_3$. We stress that the projection of $\gamma(t)$ onto $\R^2$ turns
out to be a circle whose radius explicitly depends on $P_3$.}. To be more precise, the horizontal projection of any CC-geodesic  $\gamma$ onto
$\HH_0=\R^{2n}$ belongs to a sphere whose radius only depends on
$P_{2n+1}$. Now take a point  ${\mathcal S} \in \gamma$ and
consider the,  positively oriented, vertical $T$-line  over this point. On this line, there exists a first consecutive point\footnote{This point
is a sort of \textquotedblleft cut point\textquotedblright of $\mathcal S$ along $\gamma$. Actually, this is the end-point of all CC-geodesics starting from $\mathcal S$ with same slope. However the properly said cut locus of any point in $\mathbb{H}^n\, (n\geq 1)$ is the vertical $T$-line over that point.} $\mathcal N$ to $\mathcal S$
  belonging to $\gamma$. It can be proved that these two points, henceforth called
South and North poles,
determine a minimizing connected subset of $\gamma$.  By rotating this curve around the  $T$-axis
passing from $\mathcal S$ we obtain a
closed convex surface, which is the so-called \rm{Isoperimetric Profile.}
\end{oss}

Later on we shall  study some features of a model
Isoperimetric Profile, having barycenter at
$0\in\mathbb{H}^n$.
It goes without saying  that any other Isoperimetric Profile  can be obtained from this one, by left-translations and intrinsic dilations.

Let $\rho:=\|z\|=\sqrt{\sum_{i=1}^n (x_i^2+y_i^2)}$ be the norm of
$z=(x_1,y_1,...,x_i,y_i,...,x_n,y_n)\in\R^{2n}$ and let
$u_0:\overline{B_1(0)}:=\{z\in\R^{2n}: 0\leq\rho\leq
1\}\longrightarrow\R$ be the radial function given  by
\begin{equation}\label{u0}u_0(z)=\frac{\pi}{8} +
\frac{\rho}{4}\sqrt{1-\rho^2}-\frac{\rho}{4}\arcsin\rho =:u_0(\rho).\end{equation}Setting \[\Sph^{\pm}:=\left\{p=\exp\left(z, t\right)\in\mathbb{H}^n: t=\pm
 u_0(z),\,\,\forall\,\,z\in\overline{B_1(0)}\right\},\] we call {\it Heisenberg unit Isoperimetric Profile} $\Sph$
 the compact hypersurface built by gluing together   $\Sph^{+}$
and  $\Sph^{-}$, i.e.
 $\Sph=\Sph^{+}\cup\Sph^{-}$. Since
$\nabla_{\R^{2n}} u_0={u_0}'(\rho)\frac{z}{\rho}$, it follows that
the Euclidean unit normal along $\Sph^\pm$ is given by
${\rm n}^{\pm}\eu=\frac{\left(-\nabla_{\R^{2n}}
 u_0,\pm 1\right)}{\sqrt{1+\|\nabla_{\R^{2n}}
 u_0\|^2}}$. This  implies that
\begin{eqnarray*}\nu^{\pm}=\frac{\left(-\nabla_{\R^{2n}}
 u_0 \pm \frac{z^{\perp}}{2},\pm 1\right)}{\sqrt{1+\|\nabla_{\R^{2n}} u_0\|^2+\frac{\rho^2}{4}}},\qquad|\PH(\nu^{\pm})|=
\frac{\sqrt{\|\nabla_{\R^{2n}} u_0\|^2+\frac{\rho^2}{4}}}
{\sqrt{1+\|\nabla_{\R^{2n}}
u_0\|^2+\frac{\rho^2}{4}}}.\end{eqnarray*}
Using $u_0'(\rho)
=\frac{-\rho^2}{2\sqrt{1-\rho^2}}$, we get that
\begin{eqnarray*}{\nu}^{\pm}\cc=\frac{\left(-\nabla_{\R^{2n}} u_0
\pm \frac{z^{\perp}}{2}\right)}{\sqrt{\|\nabla_{\R^{2n}}
u_0\|^2+\frac{\rho^2}{4}}}=z\pm\frac{\sqrt{1-\rho^2}}{\rho}z^\perp.\end{eqnarray*}
Hence, by definition, it follows that
\begin{eqnarray*}\perh\res\Sph^{\pm}=|\PH(\nu^{\pm})|\,\sigma^{2n}\rr\res\Sph^{\pm}
={\sqrt{\|\nabla_{\R^{2n}} u_0\|^2+\frac{\rho^2}{4}}}\,dz\res
B_1(0)=\frac{\rho}{2\sqrt{1-\rho^2}}\,dz\res
B_1(0).\end{eqnarray*}It is not difficult to compute the horizontal mean curvature of $\Sph$. In fact $\MS=-\div\cc\nn=-2n$. The so-called \textquotedblleft characteristic direction\textquotedblright along $S$ is the horizontal tangent vector field given by
\begin{eqnarray*}({\nu}^{\pm}\cc)^\perp=\left(-z\pm\frac{\sqrt{1-\rho^2}}{\rho}z^\perp\right)^\perp
=z^\perp\mp\frac{\sqrt{1-\rho^2}}{\rho}z.\end{eqnarray*}

\begin{oss}In the 1st
Heisenberg group $\mathbb{H}^1$, the geometric quantity
$\langle\gc_{\nn\op}\nn\op,\nn\rangle$, which coincides with the horizontal mean
curvature $\MS$ of $S$,  turns out to be the
 \rm geodesic curvature \it (see  \cite{Ch1}, p.
203.)  of any  smooth horizontal path
$\gamma:]-\epsilon, \epsilon[\subset\R\longrightarrow S$ such that
$\dot{\gamma}=\nn\op(\gamma)$; see also Remark \ref{njn}.
\end{oss}

The (weighted) vertical component of the Riemannian normal is given by
\begin{eqnarray*}\varpi^{\pm}=\frac{\nu^\pm_{T}}{|\PH\nu|}=\frac{\pm 1}{\|\nabla_{\R^{2n}} u_0+ \frac{z^{\perp}}{2}\|}=\frac{\pm 1}
{\sqrt{\|\nabla_{\R^{2n}} u_0\|^2+\frac{\rho^2}{4}}}=\frac{\pm
1}{\sqrt{{\frac{\rho^4}{4(1-\rho^2)}+ \frac{\rho^2}{4}}}}=\pm
2\frac{\sqrt{1-\rho^2}}{\rho}.\end{eqnarray*} Note that
$\dg\varphi=\nabla_{\R^{2n}}\varphi$ for every function $\varphi:\mathbb H^n\longrightarrow\R$ independent of $t$. Therefore
\begin{eqnarray*}\dg\varpi^\pm=\nabla_{\R^{2n}}\varpi^\pm=
\pm\frac{\partial}{\partial\rho}\left(2\frac{\sqrt{1-\rho^2}}{\rho}\right)\frac{z}{\rho}
=\mp
\left(\frac{2}{\rho^2\sqrt{1-\rho^2}}\right)\frac{z}{\rho}\end{eqnarray*}and
\begin{eqnarray}\label{derombar}\frac{\partial\varpi}{\partial\nn^\perp}=
\mp\frac{2}{\rho^2\sqrt{1-\rho^2}}\left\langle\frac{z}{\rho},({\nu}^{\pm}\cc)^\perp\right\rangle=\frac{1}{\rho\sqrt{1-\rho^2}}\frac{2\sqrt{
1-\rho^2}}{\rho}=\frac{2}{\rho^2}.\end{eqnarray}

\begin{no}Let
${\kappa}:
\Sph\longrightarrow\R$, where $\kappa^\pm:=\kappa|_{\Sph^\pm}=\pm\frac{\sqrt{1-\rho^2}}{\rho}.$ Moreover, we set $g\cc:=\langle z,\nn\rangle$ and $g\cc^\perp:=\langle
z,\nn^{\perp}\rangle$. The function $g\cc$ is called {\rm horizontal
support function} associated with $\Sph$.\end{no}

Throughout the next proofs, we shall choose an adapted o.n.
frame centered at a point $p\in\Sph$  as in Lemma \ref{Sple}. For the sake of
simplicity, we only consider the case of the north hemisphere
$\Sph^+$. In this case, one has $\nn=\nn^+=z+ \kappa z^\perp $ and
$\kappa'_\rho=\frac{\partial\kappa^+}{\partial\rho}=-\frac{1}{\rho^2\sqrt{1-\rho^2}}$. Let us state a key-identity of this paper.

\begin{lemma}\label{ccla}We have $\Delta\ss\kappa=-\frac{2n-4}{\rho^2}\kappa$.\end{lemma}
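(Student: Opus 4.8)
The plan is to compute $\Delta\ss\kappa$ directly using an adapted orthonormal frame $\{\tau_1,\ldots,\tau_{2n+1}\}$ centered at $p\in\Sph^+$ as in Lemma \ref{Sple}, since $\kappa=\frac{\sqrt{1-\rho^2}}{\rho}$ is a radial function depending only on $\rho=\|z\|$. Because $\kappa=\kappa(\rho)$, its $\HS$-gradient is $\qq\kappa=\kappa'_\rho\,\P\ss(z/\rho)$, so the first step is to express the horizontal tangent part of $z/\rho$ in the frame. Here the key geometric input is that $\nn=z+\kappa z^\perp$ and $\nn^\perp=z^\perp-\kappa z$, from which one reads off the horizontal tangent component of $z$ itself and of $z/\rho$. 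Writing $g\cc=\langle z,\nn\rangle$ and $g\cc^\perp=\langle z,\nn^\perp\rangle$, one has $\P\ss z=z-g\cc\,\nn$, and I would compute $g\cc$ and $g\cc^\perp$ explicitly in terms of $\rho$ and $\kappa$ (using $|z|=\rho$, $\langle z,z^\perp\rangle=0$, and $|z^\perp|=\rho$).

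Next I would apply the definition $\Delta\ss\kappa=\div\ss(\qq\kappa)$. Since $\qq\kappa=\kappa'_\rho\,\P\ss(z/\rho)$ and $\kappa'_\rho$ depends only on $\rho$, I expand $\div\ss(\qq\kappa)$ via the product rule: one term is $\kappa''$-type coming from differentiating the radial coefficient along $\qq\rho$, and the other is $\kappa'_\rho\,\div\ss(\P\ss(z/\rho))$. The main computational task is evaluating $\div\ss$ of the horizontal-tangential projection of the radial field $z/\rho$. Using the adapted frame of Lemma \ref{Sple}, at $p$ the connection coefficients $\langle\nabla_{\tau_i}\tau_j,\tau_k\rangle$ vanish for $i,j,k\in I\ss$, so $\div\ss X=\sum_{i\in I\ss}\langle\gc_{\tau_i}X,\tau_i\rangle$ simplifies to $\sum_{i\in I\ss}\tau_i\langle X,\tau_i\rangle$ at $p$. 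I would then compute $\tau_i$-derivatives of $\langle z/\rho,\tau_i\rangle$, using that $\tau_i\in\HH_pS$ annihilate the normal direction and that $z^\perp=-C^{2n+1}\cc z$ behaves predictably under the left-invariant vector fields. The matrix $C^{2n+1}\cc$ and the identities $\langle z,\nn\rangle$, $\langle z,\nn^\perp\rangle$ feed into collecting terms.

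I expect the main obstacle to be bookkeeping the dimensional factor. The coefficient $2n-4$ must emerge from summing $2n-1$ contributions (the index set $I\ss$ has $2n-1$ elements), where the trace of the identity-like part over $\HS$ contributes $(2n-1)$, and corrections from the radial direction $\qq\rho$ itself lying in $\HS$, together with the term involving $\varpi$ and $C^{2n+1}\ss$, subtract off the remaining amount to leave $-(2n-4)$. I would verify this by carefully separating the radial eigendirection of $\P\ss(z/\rho)$ from the remaining $(2n-2)$-dimensional orthogonal complement, treating each separately, and keeping track of the second derivative $\kappa''_\rho$ together with $\kappa'_\rho/\rho$ terms; these should conspire so that the second-derivative contributions cancel the ``wrong'' radial pieces, leaving only a multiple of $\kappa$.

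As an independent check, I would cross-validate using the operator identity $\lh\varphi=\Delta\ss\varphi-\varpi\,\partial\varphi/\partial\nn^\perp$ from Definition \ref{Deflh}: computing $\partial\kappa/\partial\nn^\perp$ is immediate from $\qq\kappa$ and $\nn^\perp=z^\perp-\kappa z$, and comparing with a direct computation of $\lh\kappa$ (which may have a cleaner radial structure, given the earlier computation \eqref{derombar} of $\partial\varpi/\partial\nn^\perp=2/\rho^2$) should confirm the coefficient $-\frac{2n-4}{\rho^2}$ and catch any sign or dimensional slip.
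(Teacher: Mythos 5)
Your plan matches the paper's proof: both compute $\Delta\ss\kappa=\sum_{i\in I\ss}\tau_i(\tau_i\kappa)$ in the adapted frame of Lemma \ref{Sple}, write $\tau_i\kappa=\kappa'_\rho\,z_i/\rho$ with $z_i=\langle z,\tau_i\rangle$, apply the product rule, and use exactly the inputs you list, namely $\sum_{i\in I\ss}\delta_{ii}=2n-1$, $\sum_{i\in I\ss}\langle\gc_{\tau_i}\tau_i,z\rangle=g\cc\MS=-2n\rho^2$, and $|\P\ss z|^2=\rho^2(1-\rho^2)$. One small correction to your heuristic: no $\varpi\,C^{2n+1}\ss$ term enters $\Delta\ss\kappa$ (that would only appear in $\lh=\Delta\ss-\varpi\,\partial/\partial\nn^\perp$); the corrections to the raw trace $2n-1$ come entirely from the normal component $g\cc\nn$ of $z$ (through $\MS$) and from differentiating the radial coefficient $1/(\rho^3\sqrt{1-\rho^2})$, which together yield the stated $-\frac{2n-4}{\rho^2}\kappa$.
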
\begin{proof}
Setting $z_i:=\langle z,\tau_i\rangle$, we have
$\tau_i(\kappa)=\kappa'_\rho\frac{
z_i}{\rho}=-\frac{z_i}{\rho^3\sqrt{1-\rho^2}}$. So we
compute\begin{eqnarray*}\Delta\ss\kappa &=&\sum_{i\in
I\ss}\tau_i\left(\tau_i(\kappa)\right)=-\sum_{i\in
I\ss}\left\langle\grad\cc\left(\frac{z_i}{\rho^3\sqrt{1-\rho^2}}\right),\tau_i\right\rangle\\&=&-\sum_{i\in
I\ss}\left(\frac{\tau_i(z_i)}{\rho^3\sqrt{1-\rho^2}}+
z_i\tau_i\left(\frac{1}{\rho^3\sqrt{1-\rho^2}}\right)\right)\\&=&-\sum_{i\in
I\ss}\left(\frac{\delta_{ii}+\langle\gc_{\tau_i}\tau_i,z\rangle}{\rho^3\sqrt{1-\rho^2}}-
\frac{z^2_i}{\rho}\cdot\frac{(3\rho^2(1-\rho^2)-\rho^4)}{\rho^6(1-\rho^2)^{\frac{3}{2}}}\right)\qquad\mbox{($\delta_{ij}$ is the Kroneker delta)}
\\&=&-\left(\frac{2n-1 + g\cc\MS}{\rho^3\sqrt{1-\rho^2}}-
{|z\ss|^2}\cdot\frac{(3\rho^2-4\rho^4)}{\rho^7(1-\rho^2)^{\frac{3}{2}}}\right)\\&=&-\left(\frac{2n-1
-2n\rho^2}{\rho^3\sqrt{1-\rho^2}}-
\frac{(3-4\rho^2)}{\rho^3\sqrt{1-\rho^2}}\right)=-\frac{2n-4}{\rho^2}\kappa,
\end{eqnarray*}where we have used the identity $|z\ss|^2=\rho^2(1-\rho^2)$. This achieves the proof.\end{proof}

\begin{lemma}\label{SeB}Let $B\cc$ be the horizontal 2nd fundamental form of $\Sph$. Then $\|B\cc\|^2\ngr = 4 +
\frac{2n-2}{\rho^2}$ and $\|S\cc\|^2\ngr=Q=2n+2$.
\end{lemma}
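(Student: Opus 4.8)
The plan is to compute $\|B\cc\|^2\ngr$ by hand and then read off $\|S\cc\|^2\ngr$ from the orthogonal splitting $B\cc=S\cc+A\cc$, whose skew part is already known. By Lemma \ref{Doca} (applicable since $\MS=-2n$ is constant) one has $\|B\cc\|^2\ngr=\sum_{i\in I\ss}|\gc_{\tau_i}\nn|^2=\sum_{i\in I\ss}|W\cc\tau_i|^2$, so everything reduces to computing $W\cc\tau_i=\gc_{\tau_i}\nn$. Here I would exploit that $\gc$ is flat, with the horizontal frame $\{X_1,Y_1,\dots,X_n,Y_n\}$ as a parallel frame; regarding $z=\sum_i(x_iX_i+y_iY_i)$ as a horizontal vector field and $z^\perp=-C^{2n+1}\cc z$ as its rotation (as in $\nn=z+\kappa z^\perp$ on $\Sph^+$), a direct Leibniz computation gives $\gc_X z=X$ and $\gc_X z^\perp=X^\perp$ for every horizontal $X$. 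Differentiating $\nn=z+\kappa z^\perp$ then yields
$$W\cc\tau_i=\tau_i+(\tau_i\kappa)\,z^\perp+\kappa\,\tau_i^\perp\qquad(i\in I\ss),$$
where, exactly as in the proof of Lemma \ref{ccla}, $\tau_i\kappa=\kappa'_\rho\,\frac{z_i}{\rho}=-\frac{z_i}{\rho^3\sqrt{1-\rho^2}}$ with $z_i:=\langle z,\tau_i\rangle$.

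Next I would expand $|W\cc\tau_i|^2$ and sum over $i\in I\ss$. The diagonal terms are routine: $\sum_i|\tau_i|^2=2n-1$, $\sum_i\kappa^2|\tau_i^\perp|^2=(2n-1)\kappa^2$, and $\sum_i(\tau_i\kappa)^2|z^\perp|^2=\rho^2\sum_i(\tau_i\kappa)^2=\rho^2\cdot\frac1{\rho^4}=\frac1{\rho^2}$, using $|z^\perp|^2=\rho^2$ and $|z\ss|^2=\rho^2(1-\rho^2)$ (so $\sum_i(\tau_i\kappa)^2=|z\ss|^2/(\rho^6(1-\rho^2))=\rho^{-4}$). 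The cross terms are the only real work. The mixed term $\langle\tau_i,\tau_i^\perp\rangle$ vanishes. For $2\kappa(\tau_i\kappa)\langle z^\perp,\tau_i^\perp\rangle$ I would use orthogonality of $-C^{2n+1}\cc$, i.e. $\langle z^\perp,\tau_i^\perp\rangle=\langle z,\tau_i\rangle=z_i$, so that $\sum_i 2\kappa(\tau_i\kappa)z_i=-2\kappa\,\frac{|z\ss|^2}{\rho^3\sqrt{1-\rho^2}}=-2\kappa^2$. The remaining term $2(\tau_i\kappa)\langle\tau_i,z^\perp\rangle$ requires $\sum_i z_i\langle\tau_i,z^\perp\rangle=\langle z\ss,z^\perp\rangle$; writing $z=\rho^2\nn+z\ss$ (so $g\cc=\rho^2$) and using $\langle z,z^\perp\rangle=0$ together with $\langle\nn,z^\perp\rangle=-g\cc^\perp=\kappa\rho^2$ (recall $g\cc^\perp=-\kappa\rho^2$) gives $\langle z\ss,z^\perp\rangle=-\kappa\rho^4$, whence this contribution sums to $+2$.

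Collecting the six pieces and substituting $\kappa^2=(1-\rho^2)/\rho^2$ gives
$$\|B\cc\|^2\ngr=(2n+1)+\frac1{\rho^2}+(2n-3)\kappa^2=4+\frac{2n-2}{\rho^2},$$
the first claimed identity. For the second, I would argue that since $S\cc$ is symmetric and $A\cc$ skew-symmetric the Gram cross term vanishes, so $\|B\cc\|^2\ngr=\|S\cc\|^2\ngr+\|A\cc\|^2\ngr$. By Lemma \ref{fios}(i) and $\varpi^2=4(1-\rho^2)/\rho^2$ we have $\|A\cc\|^2\ngr=\tfrac{n-1}{2}\varpi^2=\frac{2(n-1)(1-\rho^2)}{\rho^2}=\frac{2n-2}{\rho^2}-2(n-1)$; subtracting from $\|B\cc\|^2\ngr$ cancels the $\rho^{-2}$ terms and leaves $\|S\cc\|^2\ngr=4+2(n-1)=2n+2=Q$. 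The main obstacle throughout is the bookkeeping of the cross terms in the expansion of $\sum_i|W\cc\tau_i|^2$; once the identity $\langle z\ss,z^\perp\rangle=-\kappa\rho^4$ is in hand (via $g\cc=\rho^2$ and $g\cc^\perp=-\kappa\rho^2$) the rest is elementary algebra.
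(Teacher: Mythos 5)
Your proof is correct, and every step checks out: the formula $\gc_{\tau_i}\nn=\tau_i+(\tau_i\kappa)z^\perp+\kappa\tau_i^\perp$ agrees with the paper's, the six pieces of the expansion sum to $(2n+1)+\frac{1}{\rho^2}+(2n-3)\kappa^2=4+\frac{2n-2}{\rho^2}$, and the identities $g\cc=\rho^2$, $g\cc^\perp=-\kappa\rho^2$, $|z\ss|^2=\rho^2(1-\rho^2)$, $\langle z\ss,z^\perp\rangle=-\kappa\rho^4$ that you use are all consistent with the paper. The route, however, is not quite the one the paper takes. The paper applies Lemma \ref{Doca} in its stated second-derivative form, $\|B\cc\|^2\ngr=-\sum_i\langle\gc_{\tau_i}\gc_{\tau_i}\nn,\nn\rangle$, contracts the second covariant derivative with $\nn$ to get $-\MS+g\cc^\perp\Delta\ss\kappa+2\langle C^{2n+1}\cc\qq\kappa,\nn\rangle$, and then feeds in the key identity $\Delta\ss\kappa=-\frac{2n-4}{\rho^2}\kappa$ of Lemma \ref{ccla} together with the auxiliary fact $\langle C^{2n+1}\cc\gc_{\tau_i}\tau_i,\nn\rangle=0$. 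You instead stop at the first-derivative identity $\|B\cc\|^2\ngr=\sum_i|\gc_{\tau_i}\nn|^2$ (which is the intermediate line in the proof of Lemma \ref{Doca}, valid without any constancy assumption on $\MS$) and compute the norm directly. This buys you independence from Lemma \ref{ccla} and from second covariant derivatives altogether, at the cost of more cross-term bookkeeping; the paper's version is shorter on the page precisely because the hard work has been outsourced to Lemma \ref{ccla}. The deduction of $\|S\cc\|^2\ngr=Q$ via $\|B\cc\|^2\ngr=\|S\cc\|^2\ngr+\|A\cc\|^2\ngr$ and Lemma \ref{fios}(i) is identical to the paper's.
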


\begin{proof}
By applying (i) of Lemma \ref{Doca} we have $\|B\cc\|^2\ngr=-\sum_{i\in
I\ss}\langle\gc_{\tau_i}\gc_{\tau_i}\nn,\nn\rangle$ and
since\begin{eqnarray*}\gc_{\tau_i}\nn=\gc_{\tau_i}\left(z+\kappa
z^\perp\right)=\tau_i + \tau_i(\kappa)z^\perp -\kappa
C^{2n+1}\cc\tau_i,\end{eqnarray*}we get that
\begin{eqnarray}\nonumber\|B\cc\|^2\ngr&=&-\sum_{i\in
I\ss}\langle\gc_{\tau_i}\gc_{\tau_i}\nn,\nn\rangle\\\nonumber&=&-\sum_{i\in
I\ss}\left\langle\gc_{\tau_i}\left(\tau_i + \tau_i(\kappa)z^\perp
-\kappa
C^{2n+1}\cc\tau_i\right),\nn\right\rangle\\\nonumber&=&\sum_{i\in
I\ss}\left(-\langle\gc_{\tau_i}\tau_i,\nn\rangle-\Delta\ss\kappa\langle
z^\perp,\nn\rangle+2 \tau_i(\kappa)\langle
C^{2n+1}\cc\tau_i,\nn\rangle+\kappa\langle
C^{2n+1}\cc\gc_{\tau_i}\tau_i,\nn\rangle\right)\\\label{oiiooiio}&=&\left(-\MS+g\cc^\perp\Delta\ss\kappa+2
\langle C^{2n+1}\cc\qq\kappa,\nn\rangle\right),\end{eqnarray}where we have used the identity $\langle
C^{2n+1}\cc\gc_{\tau_i}\tau_i,\nn\rangle=0$, which holds for every $i\in I\ss$.
Since $C^{2n+1}\cc\nn\in\HS$, the last identity can  be proved by using
an adapted horizontal frame, as in Lemma \ref{Sple}. Moreover
\begin{eqnarray}\label{pooppoop}\langle
C^{2n+1}\cc\qq\kappa,\nn\rangle =
\frac{\kappa'_\rho}{\rho}\langle C^{2n+1}\cc
z\ss,\nn\rangle =\frac{\kappa'_\rho}{\rho}\langle
C^{2n+1}\cc \left(z-g\cc\nn\right),\nn\rangle=-\rho\kappa\kappa'_\rho.\end{eqnarray}At this point, using Lemma \ref{ccla} together with \eqref{oiiooiio},
\eqref{pooppoop} and the identity
$g\cc^\perp=-\rho\sqrt{1-\rho^2}$, yields
\begin{eqnarray*}\|B\cc\|^2\ngr=\left (2n
+\rho\sqrt{1-\rho^2}\cdot
\frac{2n-4}{\rho^2}\cdot\kappa+2\kappa\frac{1}{\rho\sqrt{1-\rho^2}}\right)=4
+\frac{2n-2}{\rho^2}\end{eqnarray*}which proves the first claim. Finally, since $\|B\cc\|^2\ngr=\|S\cc\|^2\ngr +
\|A\cc\|^2\ngr$ and $\|A\cc\|^2\ngr=\frac{2n-2}{4}\varpi^2$, using  $\varpi^2=4\frac{{1-\rho^2}}{\rho^2}$ yields
$\|S\cc\|^2\ngr=Q=2n+2$.
\end{proof}

Note that we have found the Gram-norm of $S\cc$  in an indirect way. However, we can be more precise.
To this aim, we first compute \begin{eqnarray*}\mathcal{J}\cc\nn^\pm=\mathcal{J}\cc(z\pm \kappa z^\perp)={\bf Id}\pm\dfrac{\kappa'_\rho}{\rho}z^\perp\otimes z\mp \kappa C\cc^{2n+1}.\end{eqnarray*}
By Lemma \ref{fgh2}, we have
$S\cc(X, Y)=- \langle W\cc X,Y\rangle-\dfrac{\varpi}{2}\langle C\ss^{2n+1}X,Y\rangle$ for every $X, Y\in\XX^1\ss$.
Furthermore, let $\mathcal{F}\ss=\{\tau_i: i\in I\ss\}$ be any horizontal tangent o.n. frame. Then  \begin{eqnarray*}-S\cc(\tau_i,\tau_j)&=& W\cc(\tau_i, \tau_j)+\dfrac{\varpi}{2}\langle C\ss^{2n+1}\tau_i,\tau_j\rangle\\&=&\langle \mathcal{J}\cc\nn^\pm\tau_i, \tau_j\rangle+\dfrac{\varpi}{2}\langle C\ss^{2n+1}\tau_i,\tau_j\rangle\\&=&\langle \mathcal{J}\cc\nn^\pm\tau_i, \tau_j\rangle\pm {\kappa}\langle C\ss^{2n+1}\tau_i,\tau_j\rangle\\&=&\delta_{ij}\pm\dfrac{\kappa'_\rho}{\rho}\left\langle \left(z^\perp\otimes z\right)\tau_i, \tau_j\right\rangle \\&=&\delta_{ij}\mp\dfrac{1}{\rho^3\sqrt{1-\rho^2}}\langle z, \tau_i\rangle\langle z^\perp, \tau_j\rangle.\end{eqnarray*}
With no loss of generality,  take $\tau_2:=\nn^\perp$. By a simple computation, we get $-S\cc(\tau_2,\tau_2)=2$.
Furthermore, note that for any $X\in\HS$ such that $\langle X,\nn^\perp\rangle=0$, one has
must have  $\langle X, z\rangle=\langle X, z^\perp\rangle=0$.
Hence, for any  o.n. frame $\mathcal{F}\ss=\{ \tau_2,...,\tau_{2n}\}$ for $\HS$ such that $\tau_2=\nn^\perp $,  $S\cc$ turns out to be the diagonal matrix of order $(2n-1)$ given by
$S\cc:={\rm Diag}[-2,-1,-1,...,-1].$ Thus we have the following:

\begin{Prop}The principal horizontal curvatures of the Heisenberg unit Isoperimetric profile $\Sph$ are the numbers $\kappa_2=-2$, $\kappa_3=...=\kappa_{2n}=-1$. Furthermore, we have that any horizontal tangent o.n. frame $\mathcal{F}\ss=\{\tau_i: i\in I\ss\}$ such that $\tau_2=\nn^\perp$ turns out to be a system of eigenvectors of $S\cc$.

 \end{Prop}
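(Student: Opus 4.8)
The plan is to diagonalize the symmetric bilinear form $S\cc$ on $\HS$ directly, so that its eigenvalues and eigenvectors can simply be read off. Since $S\cc$ is by definition the symmetric part of $B\cc$, I would start from Lemma~\ref{fgh2}, which gives $S\cc(X,Y)=-\langle W\cc X,Y\rangle-\frac{\varpi}{2}\langle C\ss^{2n+1}X,Y\rangle$, and insert the shape operator $W\cc X=\gc_X\nn=\mathcal{J}\cc\nn^\pm X$ with $\mathcal{J}\cc\nn^\pm=\mathbf{Id}\pm\frac{\kappa'_\rho}{\rho}\,z^\perp\otimes z\mp\kappa\,C\cc^{2n+1}$, as computed above. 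Evaluating on an arbitrary horizontal tangent orthonormal frame $\mathcal{F}\ss=\{\tau_i:i\in I\ss\}$ then turns the whole question into linear algebra inside the $(z,z^\perp)$-plane.

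The first key step is a cancellation. On horizontal tangent vectors the forms $\langle C\cc^{2n+1}\cdot,\cdot\rangle$ and $\langle C\ss^{2n+1}\cdot,\cdot\rangle$ agree, and since $\varpi^\pm=2\kappa^\pm$ one has $\frac{\varpi}{2}=\kappa$; hence the two skew-symmetric $C^{2n+1}$ contributions — one from $W\cc$, the other from the $\frac{\varpi}{2}\,C\ss^{2n+1}$ term — cancel exactly. What survives is the purely symmetric expression
\[-S\cc(\tau_i,\tau_j)=\delta_{ij}\mp\frac{1}{\rho^3\sqrt{1-\rho^2}}\,\langle z,\tau_i\rangle\langle z^\perp,\tau_j\rangle,\]
valid in any such frame, where I used $\frac{\kappa'_\rho}{\rho}=-\frac{1}{\rho^3\sqrt{1-\rho^2}}$.

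The second, and most delicate, step is to choose the frame so that this matrix becomes diagonal. I would set $\tau_2:=\nn^\perp$ and exploit the structural fact that $\{\nn,\nn^\perp\}$ and $\{z,z^\perp\}$ span the same $2$-plane of $\HH$: indeed $\nn=z+\kappa z^\perp$ and $\nn^\perp=z^\perp-\kappa z$, and the change-of-basis matrix has determinant $1+\kappa^2\neq0$. Consequently every frame vector $\tau_j$ with $j\geq3$, being orthogonal to $\tau_1=\nn$ and to $\tau_2=\nn^\perp$, is orthogonal to both $z$ and $z^\perp$, so $\langle z,\tau_j\rangle=\langle z^\perp,\tau_j\rangle=0$. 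This forces $-S\cc(\tau_i,\tau_j)=\delta_{ij}$ whenever $\max(i,j)\geq3$, which at once gives the off-diagonal vanishing and the values $\kappa_3=\dots=\kappa_{2n}=-1$. The remaining entry I would compute directly from $\langle z,\nn^\perp\rangle=-\kappa\rho^2$ and $\langle z^\perp,\nn^\perp\rangle=\rho^2$, obtaining $-S\cc(\tau_2,\tau_2)=1+\frac{\kappa\rho}{\sqrt{1-\rho^2}}=2$, i.e. $\kappa_2=-2$.

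Assembling these pieces shows $S\cc=\mathrm{Diag}[-2,-1,\dots,-1]$ in every orthonormal frame of $\HS$ with $\tau_2=\nn^\perp$, which is precisely the assertion: the diagonal entries are the principal horizontal curvatures and the frame is a system of eigenvectors. The point I expect to deserve the most care is the apparent asymmetry of the surviving formula for $-S\cc(\tau_i,\tau_j)$ in $z$ versus $z^\perp$; since $S\cc$ is symmetric by construction, I would confirm that no inconsistency arises, which holds precisely because off the $(z,z^\perp)$-plane both factors vanish and on it only the symmetric $(2,2)$-entry remains. Verifying that this symmetry is genuine rather than a computational coincidence is the main obstacle.
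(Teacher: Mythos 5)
Your proposal is correct and follows essentially the same route as the paper: compute $\mathcal{J}\cc\nn^\pm$, insert it into Lemma \ref{fgh2}, observe that the two $C^{2n+1}\cc$ contributions cancel because $\varpi=2\kappa$, and then diagonalize by taking $\tau_2=\nn^\perp$ so that the remaining rank-one term vanishes on $\tau_j$ for $j\geq 3$. Your added details (the explicit evaluation $\langle z,\nn^\perp\rangle=-\kappa\rho^2$, $\langle z^\perp,\nn^\perp\rangle=\rho^2$ giving the entry $2$, and the remark that the surviving rank-one form is genuinely symmetric on $\HS$ because $\P\ss z$ and $\P\ss z^\perp$ are both parallel to $\nn^\perp$) only make explicit what the paper leaves as ``a simple computation.''
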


 The principal horizontal curvatures reflect the geometric construction of the  unit Isoperimetric profile $\Sph$. Indeed, any Isoperimetric profile is generated by rotating a CC-geodesic $\gamma$ joining two consecutive points belonging
to a vertical $T$-line; these points are  the South and North poles.
 Note that the number $\kappa_2$ just express a \textquotedblleft curvature parameter \textquotedblright
which uniquely determines all CC-geodesics joining the South pole to the North pole; see also Remark \ref{njn}.  As already said, this parameter
is a special feature of CC-geodesics. Note also that the other principal horizontal curvatures express the rotational symmetry of $\Sph$ with respect to the $T$ axis. We end this section with a useful remark about the operator $\lh$.

\begin{oss}\label{RemFinT}
Let $\varphi:\Sph\longrightarrow\R$ be a smooth function and  consider its restrictions  to the hemispheres
$\Sph^\pm=\left\{p\equiv(z, t)\in\mathbb{H}^n: t=\pm u_0(z),\,z\in
\overline{B_1(0)}\right\}$. Since $\varphi(x)=\varphi(\exp(z, \pm
u_0(z)))$, we may thinking of $\varphi^\pm:=\varphi|_{\Sph^\pm}$ as  functions
of the variable $z\in \overline{B_1(0)}$. Now let
$\varphi:\overline{B_1(0)}\setminus\{0\}\longrightarrow\R$ be a smooth function and fix spherical coordinates on
$\overline{B_1(0)}\setminus\{0\}$, i.e.
 $(\rho, \xi)\in]0,1]\times \mathbb{S}^{2n-1} $. With a slight abuse of notation,
 every  function $\varphi:\Sph^\pm\setminus\{\mathcal{N, S}\}\longrightarrow\R$ will be regarded as  a function of
 the variables $(\rho, \xi)\in]0, 1]\times\mathbb{S}^{2n-1}$. Setting $
 \zeta:=\frac{z^{\perp}}{\rho}\in \mathbb{S}^{2n-1}$, the operator $\lh$ on $\Sph$ takes the
following
form:\small\begin{eqnarray}\label{ffs}\lh\varphi=\underbrace{(1-\rho^2)\varphi''_{\rho\rho}+\frac{2n
-(2n+1)\rho^2}{\rho}\varphi'_\rho}_{\rm Radial\, Operator}-
\underbrace{2\rho\sqrt{1-\rho^2}\varphi''_{\zeta \rho}}_{\rm Mixed
\,Derivatives}+\underbrace{\frac{1}{\rho^2}\Delta_{\mathbb{S}^{2n-1}}\varphi
-(1-\rho^2)\varphi''_{\zeta\zeta}-(Q-1)
\sqrt{1-\rho^2}\varphi'_\zeta}_{\rm
Angular\,Operator},\end{eqnarray}\normalsize where $\Delta_{\mathbb{S}^{2n-1}}$ denotes the Laplace operator on the Sphere
$\mathbb{S}^{2n-1}$; see \cite{Monted}. Note also
that
\begin{equation}\label{rftr}
\div_{\TT\mathbb{S}^{2n-1}}\zeta=0;\end{equation} see Lemma 2.15
in \cite{Monted}.
\end{oss}

\section{1st and 2nd variation of $\perh$ along compact hypersurfaces}
\label{1e2varfor}

Let $S\subset\mathbb{H}^n$ be a $\cont^2$-smooth compact closed
hypersurface oriented by its unit normal vector $\nu$ and let
$\UU\subset S\setminus C_S$ be a non-characteristic open set in
the relative topology. We assume that the boundary $\partial\UU$
is a $(2n-1)$-dimensional (piecewise) $\cont^1$-smooth submanifold
oriented by its outward unit normal vector $\eta$. We say that a
smooth map $\vartheta:]-\epsilon, \epsilon[\times
\UU\longrightarrow\mathbb{H}^n$ is a {\it variation} of $\UU$ if
the following hold:
{\rm(i)} every
$\vartheta_t:=\vartheta(t,\cdot):\UU\rightarrow\GG$ is an
immersion;\,\,{\rm(ii)} $\vartheta_0={\rm Id}_\UU$. By definition, the variation vector of $\vartheta$ is given by $W:=\frac{\partial
\vartheta}{\partial
t}\big|_{t=0}=\widetilde{W}|_{t=0},$ where
$\widetilde{W}=\vartheta_{\ast}\frac{\partial}{\partial t}$.
Let $\perht$ denote the $\HH$-perimeter measure along
$\vartheta_t(\UU)$ and set $\Gamma(t):=\vartheta_t^\ast\perht\in\bigwedge^{2n}
(\TT^\ast\UU),$ for every $t\in]-\epsilon,\epsilon[$. Note that
$\Gamma(t)$ is a 1-parameter family  of $2n$-forms along $\UU$. The 1st and 2nd variation formulas of the
$\HH$-perimeter $\perh$ under the variation $\vartheta$ are  given
by $I_\UU(W,\per)=
\int_{\UU}\dot{\Gamma}(0)$ and $II_\UU(W,\per)=
\int_{\UU}\ddot{\Gamma}(0)$.

In \cite{Monteb} we proved in a more general context, the
following:
\begin{teo}[see \cite{Monteb}]\label{12VF}Let $S\subset\mathbb{H}^n$ be a $\cont^2$-smooth
hypersurface oriented by its unit normal vector $\nu$ and let
$\UU\subset S\setminus C_S$ be a non-characteristic relatively
compact open set having piecewise $\cont^1$-smooth boundary
$\partial\UU$ oriented by its outward unit normal vector $\eta$.
Let $\vartheta_t$ be a variation of $\UU$ with variation vector
$W$ and set
 $\cn:=\frac{\langle W ,\nu
 \rangle}{|\PH\nu|}$. Then
\[I_\UU(W,\perh) = - \int_{\UU}\MS
\cn\,\perh +  \int_{\partial\UU}
  \langle W, \eta \rangle\, |\PH{\nu}|\,\sigma^{2n-1}\rr\]
Moreover, if $\UU$ has constant horizontal mean curvature $\MS$,
then \begin{eqnarray*}
II_\UU(W,\per)=\int_{\UU}\left(-\MS\,\widetilde{W}(\cn_t)\big|_{t=0}+
|\qq\cn|^2+\cn^2\left( (\MS)^2-\|S\cc\|^2\ngr+
2\frac{\partial\varpi}{\partial\nn^\perp} -\frac{n+1}{2}\varpi^2
\right)\right)\perh\\+
\int_{\partial\UU}\left(\left\langle\left(-\cn\,\qq\cn +
[\WW^{\nu^t},\WW^{\TT}]^{\TT}\big|_{t=0}\right),\eta\right\rangle\,|\PH\nu|+
 \left(\div\ts(|\PH\nu| {W}^{\!\TT})- \MS \langle{W},\nu\rangle
\right)\langle{W}^{\!\TT},\eta\rangle\right)\sigma^{2n-1}\rr,\end{eqnarray*}
where
$W^\TT$ denotes the tangential component of $W$ along
 $\UU$ and $\WW^{\TT},\,\WW^{\nu^t}$ denote, respectively,
tangential and normal components of $\WW$ along $\UU_t$.
Moreover, we have set $\cn_t:=\frac{\langle \widetilde{W}
,\nu^t
 \rangle}{|\widetilde{\PH}\nu^t|}$, where  $\nu^t$ is the Riemannian unit normal vector along
$\vartheta_t(\UU)$ and $\widetilde{\P\cc}$ is the orthogonal
projection onto $\HH$ at $\vartheta_t(p)$, for every $p\in
S$.\end{teo}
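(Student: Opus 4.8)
The plan is to reduce both formulas to computing the first two $t$-derivatives at $t=0$ of the pulled-back $2n$-form $\Gamma(t)=\vartheta_t^{\ast}\perht$, since by definition $I_\UU(W,\perh)=\int_\UU\dot\Gamma(0)$ and $II_\UU(W,\perh)=\int_\UU\ddot\Gamma(0)$. First I would extend the horizontal unit normal $\nn^t$ along $\vartheta_t(\UU)$ to a neighborhood in $\mathbb H^n$ and write $\perht=(\nn^t\LL\sigma^{2n+1}\rr)$; the restriction to the moving surface is insensitive to the extension. With $\WW=\vartheta_{t\ast}\partial_t$ the velocity field of the variation, the transport identity
$$\frac{d}{dt}\,\vartheta_t^{\ast}\beta_t=\vartheta_t^{\ast}\!\left(\frac{\partial\beta_t}{\partial t}+\Lie_{\WW}\beta_t\right)$$
together with Cartan's magic formula $\Lie_{\WW}=d\,\imath_{\WW}+\imath_{\WW}d$ turns the whole computation into exterior-calculus bookkeeping.

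For the first variation I would apply the above to $\beta_t=\nn^t\LL\sigma^{2n+1}\rr$ at $t=0$. The term $d\,\imath_{W}\beta_0$ is exact, so after pulling back and integrating it yields, by Stokes' theorem together with the contraction identity $(W\LL\perh)|_{\partial\UU}=\langle W,\eta\rangle\,|\PH\nu|\,\sigma^{2n-1}\rr$ recorded in Section \ref{IBPAA}, the boundary contribution $\int_{\partial\UU}\langle W,\eta\rangle\,|\PH\nu|\,\sigma^{2n-1}\rr$. The remaining terms $\imath_{W}d\beta_0+\partial_t\beta_0$ are interior: here $d(\nn\LL\sigma^{2n+1}\rr)=(\div\nn)\,\sigma^{2n+1}\rr$ with the full divergence reducing, on the relevant horizontal directions, to $\divh\nn=-\MS$, while $\partial_t\beta_0=(\dot\nn\LL\sigma^{2n+1}\rr)$ accounts for the tilting of the normal. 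Decomposing $W=\langle W,\nu\rangle\,\nu+W^{\TT}$ and recalling $\cn=\langle W,\nu\rangle/|\PH\nu|$, these interior terms collect to $-\int_\UU\MS\,\cn\,\perh$; equivalently one can read the interior part off directly from the horizontal divergence formula of Theorem \ref{GD}.

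For the second variation I would differentiate $\Gamma(t)$ once more and evaluate at $t=0$, now invoking the constant horizontal mean curvature hypothesis to discard the terms in which $\MS$ would be differentiated tangentially. The new ingredients are the first $t$-derivatives of the data along the flow: $\dot\nn$, the derivative of $|\widetilde\PH\nu^t|$, and the quantity $\cn_t=\langle\WW,\nu^t\rangle/|\widetilde\PH\nu^t|$, whose differentiation produces the term $-\MS\,\WW(\cn_t)\big|_{t=0}$. Differentiating the first-variation integrand and integrating by parts through the horizontal Green's formulas of Corollary \ref{Properties} converts the surviving normal-derivative term into the Dirichlet energy $|\qq\cn|^2$. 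The zeroth-order (in $\cn$) part comes from the square of the horizontal shape operator $W\cc X=\gc_X\nn$: using Lemma \ref{fgh2} to express $S\cc$ through $W\cc$ and the torsion-type term $\varpi\,C\ss^{2n+1}$, together with the identities of Lemma \ref{fios} and $\|A\cc\|^2\ngr=\frac{n-1}{2}\varpi^2$, one recognises the coefficient of $\cn^2$ as $(\MS)^2-\|S\cc\|^2\ngr+2\frac{\partial\varpi}{\partial\nn^\perp}-\frac{n+1}{2}\varpi^2$. The boundary terms assemble from the exact pieces produced at each differentiation, yielding the stated commutator $[\WW^{\nu^t},\WW^{\TT}]^{\TT}\big|_{t=0}$ and the $\div\ts(|\PH\nu|\,W^{\TT})$ contributions.

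The main obstacle is the second-variation bookkeeping: the correct $t$-differentiation of the horizontal unit normal $\nn^t$ and of the projection $\widetilde\PH$ along the flow, and the reorganisation of the resulting second-order expression into genuine geometric invariants. Unlike the Riemannian case, the horizontal second fundamental form $B\cc$ is not symmetric when $n>1$ (its skew part equals $\frac12\varpi\,C\ss^{2n+1}$), so the square of the shape operator does not collapse to $\|B\cc\|^2\ngr$, and one must separate the symmetric part $S\cc$ from the torsion contribution by means of Lemma \ref{fgh2}; this separation is precisely what produces the extra sub-Riemannian terms $2\frac{\partial\varpi}{\partial\nn^\perp}$ and $-\frac{n+1}{2}\varpi^2$, which have no Riemannian analogue. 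Keeping track of the tangential velocity $W^{\TT}$ through the two successive differentiations, so as to recover exactly the commutator and $\div\ts$ boundary terms, is the remaining delicate point.
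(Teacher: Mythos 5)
The paper does not actually reprove this theorem: it is imported from \cite{Monteb}, where (as the paper notes immediately after the statement) the computation is carried out with the curvature $2$-forms of an adapted orthonormal coframe along $S$. Your starting point --- $\Gamma(t)=\vartheta_t^{\ast}\perht$, the transport identity and Cartan's magic formula --- is the framework the paper itself sets up at the beginning of Section \ref{1e2varfor} and in Remark \ref{REMREM}, so the route is legitimate; but as written it has a genuine gap already in the first variation. The identity $(X\LL\perh)|_{\partial \UU}=\langle X,\eta\rangle\,|\PH\nu|\,\sigma^{2n-1}\rr$ from Section \ref{IBPAA} is stated only for $X$ tangent to $S$, whereas your $W$ has a normal component. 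Computing $(W\LL\perh)|_{\partial\UU}=\sigma^{2n+1}\rr(\nn,W,\,\cdot\,)$ in an orthonormal basis $\{\nu,\eta,e_1,\dots,e_{2n-1}\}$ at a point of $\partial\UU$ gives
\[
(W\LL\perh)|_{\partial\UU}=\bigl(|\PH\nu|\,\langle W,\eta\rangle-\langle \nn,\eta\rangle\,\langle W,\nu\rangle\bigr)\,\sigma^{2n-1}\rr ,
\]
and the cross term $-\langle\nn,\eta\rangle\langle W,\nu\rangle$ does not vanish in general, since $\nn$ is not tangent to $S$. You must show that this term is compensated by the contribution of $\partial_t(\nn^t\LL\sigma^{2n+1}\rr)\big|_{t=0}$ (which, contrary to your implicit claim, does not restrict to zero on $\UU$ once $\nn^t$ is extended off the moving surface), and also that the final answer is independent of the chosen extension. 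None of this is addressed, and it is exactly the step that separates the stated formula from a wrong one.

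The second variation is where the real content of the theorem lies, and there your proposal is a plan rather than a proof. The coefficient $(\MS)^2-\|S\cc\|^2\ngr+2\frac{\partial\varpi}{\partial\nn^\perp}-\frac{n+1}{2}\varpi^2$ and the boundary commutator term are named but never derived: you invoke Lemma \ref{fgh2} and Lemma \ref{fios} and assert that ``one recognises'' the coefficient, and then explicitly defer the differentiation of $\nn^t$, of $|\widetilde{\PH}\nu^t|$ and of the induced area element as ``the main obstacle'' and ``the remaining delicate point.'' Those second $t$-derivatives are precisely where the sub-Riemannian corrections $2\frac{\partial\varpi}{\partial\nn^\perp}$ and $-\frac{n+1}{2}\varpi^2$ originate (compare the splitting of $\ddot\Gamma(0)$ in Remark \ref{REMREM}); Lemmas \ref{fgh2} and \ref{fios} are algebraic identities about $B\cc$, $S\cc$ and $A\cc$ developed here for the stability analysis of Sections \ref{Sez3}--\ref{STABs}, and they cannot by themselves produce the flow derivatives. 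To complete the argument you would have to carry out the double differentiation of $\Gamma(t)$ explicitly, either in an adapted coframe via the structure equations as in \cite{Monteb}, or through the factorization $\perh=|\PH\nu|\,\sigma^{2n}\rr$ combined with the known Riemannian second variation of $\sigma^{2n}\rr$.
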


In \cite{Monteb} we stated all results for $\cin$-smooth hypersurfaces. If instead we assume that
$S$ is only in $\cont^2$, some of the computations in the proof  of the previous theorem should be understood in the sense of the distribution theory. More precisely, the proof uses explicitly the so-called {\rm curvature $2$-forms}  associated with an adapted o.n. frame along $S$; see \cite{Ch1}, \cite{Spiv}, or \cite{Monte, Monteb}. However, one can also assume that $S$ is in  $\cont^3$ and then use an approximation argument
in order to extend the final formula, where no third derivatives occur, to the $\cont^2$ case.

\begin{oss}\label{carnei}Let $S\subset \mathbb{H}^n$ be a $\cont^2$-smooth compact closed
hypersurface. In this case it turns out that $\dim_{\rm
Eu-Hau}({\rm Car}_S)\leq n$; see \cite{Bal3}. Just for the case
$n=1$, we shall further suppose that {\rm $C_S$ is contained in a
finite union of $\cont^1$-smooth horizontal curves}. As already
said, under these assumptions one can show that there exists a
family $\{\UU_\epsilon\}_{\epsilon\geq 0}$ of open subsets of $S$,
with piecewise $\cont^1$-smooth boundaries, such
that: {\rm(i)} $C_S\Subset\UU_\epsilon$ for
every $\epsilon>0$; \,\, {\rm(ii)}
$\sigma^{2n}\rr(\UU_\epsilon)\longrightarrow 0$ for
$\epsilon\rightarrow
0^+$; \,\,{\rm(iii)} $\int_{\partial\UU_\epsilon}|\PH\nu|\,\sigma^{2n-1}\rr\longrightarrow
0$ for $\epsilon\rightarrow 0^+$.
\end{oss}

Set $S_\epsilon:=S\setminus \UU_\epsilon$. Later on
we shall discuss the validity of Theorem \ref{12VF} for
$\cont^2$-smooth closed compact hypersurfaces $S$ having non-empty
characteristic set. To this end, let us consider the following
limits (if they
exist):\begin{eqnarray*}I_S(W,\per):=\lim_{\epsilon\rightarrow
0^+}I_{S_\epsilon}(W,\per),
\qquad II_S(W,\per):=\lim_{\epsilon\rightarrow 0^+}II_{S_\epsilon}(W,\per).\end{eqnarray*}
Note that $I_{S_\epsilon}(W,\per)$ and
$II_{S_\epsilon}(W,\per)$ represent the 1st and 2nd variation of
$\perh$ along the 1-parameter family $\{S_\epsilon\}_{\epsilon>0}$
of non-characteristic hypersurfaces (with boundary). Since $C_S$ is
a null set with respect to the $\sigma^{2n}\rr$-measure (see
Remark \ref{CSET}) it is clear that, if they exist, the limits
$I_S(W,\per)$ and $I_S(W,\per)$ express the 1st and 2nd variation of
$\perh$ along $S$. For every
$\epsilon>0$
 one has
\begin{eqnarray*}I_S(W, \perh)=I_{S_\epsilon}(W, \perh) + I_{\UU_\epsilon}(W, \perh),\qquad
II_S(W, \perh)=II_{S_\epsilon}(W, \perh) +
II_{\UU_\epsilon}(W, \perh).\end{eqnarray*}

\begin{oss}\label{REMREM} Although the quantities appearing in these
formulas are not well defined at  $C_S$, we could
alternatively compute $I_{\UU_\epsilon}(W, \perh)$ and
$II_{\UU_\epsilon}(W, \perh)$, by using the representation formula
$\perh=|\PH\nu|\,\sigma^{2n}\rr$, and then show
that:\begin{eqnarray}\label{lim12v}\lim_{\epsilon\rightarrow
0^+}I_{\UU_\epsilon}(W,\per)=0, \qquad \lim_{\epsilon\rightarrow
0^+}II_{\UU_\epsilon}(W,\per)=0.\end{eqnarray}Since
$I_{\UU_\epsilon}(W,\perh):= \int_{\UU_\epsilon}\dot{\Gamma}(0)$,
$ II_{\UU_\epsilon}(W,\perh):=
\int_{\UU_\epsilon}\ddot{\Gamma}(0)$, where
$\Gamma(t):=\vartheta_t^\ast\perht$,  we need to compute
\begin{eqnarray*}\dot{\Gamma}(0)&=&
\left(\frac{d}{dt}|\widetilde{\P\cc}\nu^t|\right)\bigg|_{t=0}\,\sigma^{2n}\rr
+ |\P\cc\nu|
\left(\frac{d}{dt}(\sigma^{2n}\rr)_t\right)\bigg|_{t=0},\\\ddot{\Gamma}(0)&=&
\left(\frac{d^2}{dt^2}|\widetilde{\P\cc}\nu^t|\right)\bigg|_{t=0}\,\sigma^{2n}\rr
+
2\left(\frac{d}{dt}|\widetilde{\P\cc}\nu^t|\right)\bigg|_{t=0}\left(\frac{d}{dt}(\sigma^{2n}\rr)_t\right)\bigg|_{t=0}
+|\P\cc\nu|\left(\frac{d^2}{dt^2}(\sigma^{2n}\rr)_t\right)\bigg|_{t=0}.\end{eqnarray*}
Note that $\frac{d}{dt}(\sigma^{2n}\rr)_t$ and
$\frac{d^2}{dt^2}(\sigma^{2n}\rr)_t$,  evaluated at ${t=0}$, express
the ``infinitesimal'' 1st and 2nd variation of the Riemannian Area
$\sigma^{2n}\rr$; see \cite{Spiv} or \cite{Ch1}.
Nevertheless, this analysis goes beyond the scopes of this paper
and below we shall
 discuss a different approach and other
  results valid in Heisenberg groups.\end{oss}
By applying the 1st variation of $\perh$ to
$\UU=S_\epsilon\,(\epsilon>0)$ and (iii) of Remark \ref{carnei},
we see that
 the boundary integral tends to $0$ as long as
$\epsilon\rightarrow 0^+$, i.e.
$$\lim_{\epsilon\rightarrow 0^+}\int_{\partial S_\epsilon}
  \langle W, \eta \rangle\,
  |\PH{\nu}|\,\sigma^{2n-1}\rr=0.$$
Therefore, it remains to study the convergence of the integral
along the interior of
  $S_\epsilon$ as long as $\epsilon\rightarrow 0^+$. By definition
  $\MS=-\div\ss\nn$ and it turns out that
    $\div\ss\nn=\div\cc\nn$. So we have\begin{equation}\label{gghh}
    -\MS=\div\cc\nn=\div\cc\left(\frac{\PH\nu}{|\PH\nu|}\right)
  =\frac{\div\cc(\PH\nu)-\langle\grad\cc|\PH\nu|, \nn\rangle}{|\PH\nu|}.\end{equation}
 By noting that\footnote{Since $S$ is $\cont^2$-smooth, $\nu$ is of
class $\cont^1$ everywhere on $S$.} $|\PH\nu|$ is Lipschitz
continuous at $C_S$, it follows that $\MS\in L^1(S, \perh)$.  More precisely, for every $\epsilon>0$ one
has\begin{eqnarray*}\int_{\UU_\epsilon}|\MS| \,\perh  =
\int_{\UU_\epsilon}\big| \div\cc(\PH\nu)-\langle\grad\cc|\PH\nu|,
\nn\rangle \big|\,\sigma^{2n}\rr
\leq C\,\sigma^{2n}\rr(\UU_\epsilon),
\end{eqnarray*}where $C$ is a constant only dependent on (the
Lipschitz constant of) $\PH\nu$. So
$I_{\UU_\epsilon}(W,\per)\longrightarrow 0$ as long as
${\epsilon\rightarrow 0^+}$ and we finally get that
\begin{eqnarray}\label{fv}I_S(W,\per)=\lim_{\epsilon\rightarrow
0^+}I_{S_\epsilon}(W,\per)=- \int_{S}\MS \cn\,\perh.\end{eqnarray}

 We also need the following fact:
\begin{lemma}Let $\varphi:S\longrightarrow\R$ be a
piecewise smooth function such that
$\int_S\varphi\,\sigma^{2n}\rr=0.$ Then there exists a
volume-preserving normal variation\footnote{This means that
$\widetilde{W}=\vartheta_{\ast}\frac{\partial}{\partial
 t}$ is parallel to $\nu^t$ for every $t\in]-\epsilon, \epsilon[$.} $\vartheta_t$ whose
variation vector is $W=\varphi\,\nu$. If $\varphi=0$ along
$\partial S$ we can always assume that the variation fixes the
boundary.
\end{lemma}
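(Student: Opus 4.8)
The plan is to follow the classical Barbosa--do Carmo correction scheme: first produce some variation whose initial velocity is the prescribed field $\varphi\,\nu$, and then perturb it by an auxiliary volume-restoring deformation so that the enclosed Riemannian volume stays constant. Recall that, for a variation $\vartheta_t$ of $S$ with variation field $W$, the first variation of the top-dimensional volume $\sigma^{2n+1}\rr$ of the region bounded by $S$ equals $\int_S\langle W,\nu\rangle\,\sigma^{2n}\rr$; thus the hypothesis $\int_S\varphi\,\sigma^{2n}\rr=0$ says precisely that the volume is stationary to first order along any field whose normal component is $\varphi$. This is exactly the condition that will let the correction leave the initial velocity untouched.

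First I would fix a preliminary variation $F:]-\epsilon,\epsilon[\times S\longrightarrow\mathbb{H}^n$ with $\frac{\partial F}{\partial t}\big|_{t=0}=\varphi\,\nu$ --- e.g. $F(t,p):=p\star\esp(t\,\varphi(p)\,\nu(p))$ --- which in general does \emph{not} preserve volume. Next choose a fixed vector field $Y\in\cin(\mathbb{H}^n,\TT\mathbb{H}^n)$, compactly supported near (but not on) $S$, normalized so that $\int_S\langle Y,\nu\rangle\,\sigma^{2n}\rr=1$; such a $Y$ exists because $S$ bounds a region of positive volume. Let $\psi_s$ denote its flow and set $G(t,s,\cdot):=\psi_s\circ F(t,\cdot)$. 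Writing $V(t,s)$ for the Riemannian volume enclosed by $G(t,s,\cdot)(S)$, the map $V$ is $C^1$ (the integral defining it smooths out the merely piecewise smooth $\varphi$), with $V(0,0)=V_0$ the volume bounded by $S$ and $\frac{\partial V}{\partial s}(0,0)=\int_S\langle Y,\nu\rangle\,\sigma^{2n}\rr=1\neq0$. By the implicit function theorem there is a $C^1$ function $t\mapsto s(t)$ with $s(0)=0$ and $V(t,s(t))\equiv V_0$. Setting $\vartheta_t:=\psi_{s(t)}\circ F_t$ then yields a volume-preserving variation.

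It remains to check that the correction has not disturbed the initial velocity and to handle normality and the boundary. Differentiating $V(t,s(t))\equiv V_0$ at $t=0$ gives $\frac{\partial V}{\partial t}(0,0)+\frac{\partial V}{\partial s}(0,0)\,s'(0)=0$; since $\frac{\partial V}{\partial t}(0,0)=\int_S\varphi\,\sigma^{2n}\rr=0$ by hypothesis and $\frac{\partial V}{\partial s}(0,0)=1$, we obtain $s'(0)=0$. Hence $\frac{\partial\vartheta}{\partial t}\big|_{t=0}=\frac{\partial F}{\partial t}\big|_{t=0}+s'(0)\,Y=\varphi\,\nu$, so $W=\varphi\,\nu$ as required. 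To upgrade $\vartheta_t$ to a genuinely \emph{normal} variation, with velocity parallel to $\nu^t$ for every $t$ and \emph{without} changing the images $\vartheta_t(S)$ (hence without changing the enclosed volume), I would reparametrize on the domain side by a $t$-dependent diffeomorphism $\phi_t$ of $S$ solving the ODE that cancels the tangential part of the velocity; since the velocity is already purely normal at $t=0$, this reparametrization leaves $W=\varphi\,\nu$ intact.

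For the last assertion, assume $\varphi=0$ along $\partial S$. Then one takes $F$ to fix $\partial S$ pointwise and $Y$ supported in the interior of the bounded region, so that $\psi_s$ fixes $\partial S$; consequently $\vartheta_t$ fixes $\partial S$, and since the velocity vanishes there the tangential reparametrization $\phi_t$ fixes $\partial S$ as well. The step I expect to be the main obstacle is the bookkeeping needed to enforce all three requirements --- exact volume preservation, purely normal velocity for every $t$, and the boundary being fixed --- simultaneously, together with the regularity check that $V(t,s)$ is $C^1$ when $\varphi$ is only piecewise smooth, so that the implicit function theorem genuinely applies.
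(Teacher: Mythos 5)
Your proposal is correct and is essentially a full write-out of the argument the paper itself invokes: the paper's proof consists only of the remark that the statement is well known in the Euclidean setting and a citation to Barbosa--do Carmo (Lemma 2.4 of \cite{DC}), whose two-parameter correction scheme via the implicit function theorem is exactly what you reproduce. The key points you identify --- that $s'(0)=0$ follows from the zero-mean hypothesis so the initial velocity is undisturbed, and that the tangential reparametrization leaves both the images $\vartheta_t(S)$ and hence the enclosed volume unchanged --- are precisely the content of that classical lemma.
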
\begin{proof}This fact is well-known
in the Euclidean setting and its proof applies as well to our
case; see \cite{DC}, Lemma 2.4.\end{proof}

 \begin{oss}\label{rvvf}As in Riemannian Geometry, the
 1st variation of $\perh$ along a compact closed hypersurface $S$ only depends on the normal component of
 the variation vector. For this reason,
  in the sequel only {\it normal
 variations} of $S$ will be considered.
 Without loss of generality, we shall also assume that $W:=\varphi\,|\PH\nu|\,
 \nu$ for some smooth function
 $\varphi:S\longrightarrow\R$. Since $\cn=\frac{\langle W ,\nu
 \rangle}{|\PH\nu|}$, we have $\cn=\varphi$. \end{oss}

Let $S$ be of constant horizontal mean curvature
$\MS$ (at each point of $S\setminus C_S$). Under the previous
assumptions, the 2nd variation of $\perh$ along $S_\epsilon$ is
given by
\begin{eqnarray*}
  II_{S_\epsilon}(W,\per)&=&\int_{S_\epsilon}\left(-\MS\widetilde{W}
  (\varphi_t)\big|_{t=0}+ |\qq\varphi|^2+\varphi^2\left(\MS^2-\|S\cc\|^2\ngr+
2\frac{\partial\varpi}{\partial\nn^\perp} -\frac{n+1}{2}\varpi^2
\right) \right)\perh\\&&- \int_{\partial S_\epsilon}
\varphi\langle\qq\varphi, \eta\rangle
|\PH\nu|\,\sigma^{2n-1}\rr.\end{eqnarray*}

\begin{lemma}\label{sv}Let $S\subset\mathbb{H}^n$ be a $\cont^2$-smooth
compact closed hypersurface  with constant horizontal mean
curvature $\MS$. If $\frac{1}{|\PH\nu|}\in
L^1(S, {\sigma^{2n}\rr})$, then
 $II_S(W, \perh)=\lim_{\epsilon\rightarrow
0^+}II_{S_\epsilon}(W,\per)$, where the variation vector $W$ is
chosen as in Remark \ref{rvvf}.
\end{lemma}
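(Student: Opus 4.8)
The plan is to show that the (interior) integrand of the second variation formula over $S_\epsilon$ belongs to $L^1(S,\perh)$ and then to pass to the limit $\epsilon\to 0^+$ using absolute continuity of the integral together with properties (ii)--(iii) of Remark \ref{carnei}. Denote by
\[\mathcal{I}:=-\MS\,\WW(\varphi_t)\big|_{t=0}+|\qq\varphi|^2+\varphi^2\left(\MS^2-\|S\cc\|^2\ngr+2\frac{\partial\varpi}{\partial\nn^\perp}-\frac{n+1}{2}\varpi^2\right)\]
the interior integrand, so that the formula recalled just above reads $II_{S_\epsilon}(W,\per)=\int_{S_\epsilon}\mathcal{I}\,\perh-\int_{\partial S_\epsilon}\varphi\langle\qq\varphi,\eta\rangle\,|\PH\nu|\,\sigma^{2n-1}\rr$. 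Since $\partial S_\epsilon=\partial\UU_\epsilon$, I would split $\int_{S_\epsilon}\mathcal{I}\,\perh=\int_S\mathcal{I}\,\perh-\int_{\UU_\epsilon}\mathcal{I}\,\perh$; the whole assertion then reduces to proving that $\mathcal{I}\in L^1(S,\perh)$ (so that the $\UU_\epsilon$-integral tends to $0$) and that the boundary integral vanishes in the limit.

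The core of the argument is a pointwise estimate of $\mathcal{I}$ near $C_S$. Since $\MS$ is constant and $\varphi$ is bounded, and since $|\qq\varphi|\le|\grad\ts\varphi|$ is bounded because $\qq\varphi=\P\ss\,\grad\ts\varphi$ is the $\HS$-projection of the Riemannian tangential gradient of a smooth function on the compact set $S$, only the curvature/torsion terms and the variation derivative $\WW(\varphi_t)\big|_{t=0}$ can be singular. Here the $\cont^2$-regularity of $S$ is decisive: the unit normal $\nu$ is then of class $\cont^1$, hence $\PH\nu$ is $\cont^1$ and $|\PH\nu|$ is Lipschitz on $S$ and of class $\cont^1$ on $S\setminus C_S$. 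Differentiating $\nn=\PH\nu/|\PH\nu|$ gives $|\nabla_X\nn|\le C/|\PH\nu|$ for unit $X$, whence $\|S\cc\|^2\ngr\le\|B\cc\|^2\ngr=\|W\cc\|^2\ngr\le C/|\PH\nu|^2$; likewise, differentiating $\varpi=\nu_T/|\PH\nu|$ yields $|\varpi|\le 1/|\PH\nu|$ and $|\partial\varpi/\partial\nn^\perp|\le C/|\PH\nu|^2$. For the remaining term I would use the standard first variation of the unit normal under a normal variation of speed $\langle W,\nu\rangle=\varphi|\PH\nu|$, namely $\frac{d}{dt}\nu^t\big|_{t=0}=-\grad\ts(\varphi|\PH\nu|)$; since $\varphi|\PH\nu|$ is Lipschitz this is bounded, so $\frac{d}{dt}|\widetilde{\P\cc}\nu^t|$ is bounded at $t=0$. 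Writing $\varphi_t=\langle\WW,\nu^t\rangle/|\widetilde{\P\cc}\nu^t|$, whose numerator is $O(|\PH\nu|)$ with bounded $t$-derivative and whose denominator equals $|\PH\nu|$ at $t=0$, one obtains $\WW(\varphi_t)\big|_{t=0}=O(|\PH\nu|^{-1})$. Collecting these bounds gives $|\mathcal{I}|\le C\,(1+|\PH\nu|^{-2})$ on $S\setminus C_S$.

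Multiplying by $\perh=|\PH\nu|\,\sigma^{2n}\rr$ turns this into $|\mathcal{I}|\,\perh\le C\,(|\PH\nu|+|\PH\nu|^{-1})\,\sigma^{2n}\rr\le C\,(1+|\PH\nu|^{-1})\,\sigma^{2n}\rr$, so that $\int_S|\mathcal{I}|\,\perh\le C\big(\sigma^{2n}\rr(S)+\int_S|\PH\nu|^{-1}\,\sigma^{2n}\rr\big)<\infty$ precisely by the hypothesis $1/|\PH\nu|\in L^1(S,\sigma^{2n}\rr)$; this is the one point where that assumption enters. Hence $\mathcal{I}\in L^1(S,\perh)$, and since $\perh(\UU_\epsilon)\le\sigma^{2n}\rr(\UU_\epsilon)\to 0$ by (ii) of Remark \ref{carnei}, absolute continuity of the integral gives $\int_{\UU_\epsilon}\mathcal{I}\,\perh\to 0$, that is $\int_{S_\epsilon}\mathcal{I}\,\perh\to\int_S\mathcal{I}\,\perh$. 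Finally, bounding the boundary integrand by $|\varphi|\,|\qq\varphi|\,|\PH\nu|\le C\,|\PH\nu|$ and invoking (iii) of Remark \ref{carnei}, i.e. $\int_{\partial\UU_\epsilon}|\PH\nu|\,\sigma^{2n-1}\rr\to 0$, makes the boundary term disappear. Thus $\lim_{\epsilon\to 0^+}II_{S_\epsilon}(W,\per)$ exists, is finite, and equals $\int_S\mathcal{I}\,\perh$, so $II_S(W,\perh)$ is well defined, which is the claim. I expect the delicate point to be the bound on $\WW(\varphi_t)\big|_{t=0}$: it forces one to track the $t$-dependence of the entire variation (not just of the fixed surface $S$) and to check that the two $O(|\PH\nu|^{-1})$-type contributions do not conspire into an $O(|\PH\nu|^{-2})$ term, which would no longer be controlled by the hypothesis.
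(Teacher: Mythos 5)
Your argument is correct and follows essentially the same route as the paper's proof: kill the boundary term via (iii) of Remark \ref{carnei} using the boundedness of $\varphi\langle\qq\varphi,\eta\rangle$, and control the interior integrand by showing the singular terms $\|S\cc\|^2\ngr$, $\partial\varpi/\partial\nn^\perp$ and $\varpi^2$ are $O(|\PH\nu|^{-2})$, which lie in $L^1(S,\perh)$ precisely because $1/|\PH\nu|\in L^1(S,\sigma^{2n}\rr)$ is equivalent to $1/|\PH\nu|^2\in L^1(S,\perh)$. Your explicit tracking of $\WW(\varphi_t)\big|_{t=0}$ via the quotient rule is in fact slightly more careful than the paper's one-line treatment of that term, but the overall strategy is identical.
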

Notice that  $\frac{1}{|\PH\nu|}\in
L^1(S, {\sigma^{2n}\rr}) \Longleftrightarrow\frac{1}{|\PH\nu|}\in
L^2(S, {\perh})$.
\begin{proof}We claim that the
boundary integral tends to $0$ as long as $\epsilon\rightarrow
0^+$, i.e.
$$\lim_{\epsilon\rightarrow 0^+}\int_{\partial S_\epsilon}\varphi\langle\qq\varphi, \eta\rangle
|\PH\nu|\,\sigma^{2n-1}\rr=0.$$Since $\partial
S_\epsilon=\partial\UU_\epsilon$, using (iii) of Remark
\ref{carnei} we get $\int_{\partial
S_\epsilon}|\PH\nu|\,\sigma^{2n-1}\rr\longrightarrow 0$ for
$\epsilon\rightarrow 0^+$ and the claim follows since
$|\varphi\langle\qq\varphi,
\eta\rangle|\leq\frac{1}{2}\|\qq\varphi^2\|_{L^\infty(S)}$. Let us study the integral along the interior of
  $S_\epsilon$.
\begin{itemize}\item Since \[-\MS\widetilde{W}(\varphi_t)\big|_{t=0}=-\MS \varphi\,
|\PH\nu|\left(\frac{d\varphi_t}{dt}\bigg|_{t=0}\right),\]and since
$\MS$ is constant along $S\setminus C_S$, it follows that the 1st addend can be
integrated over all of $S$; since $|\qq\varphi|\leq
\|\qq\varphi\|_{L^\infty(S)}$, the same holds true for
the 2nd addend.

\item One has
 $\|S\cc\|^2\ngr=\sum_{i,
j\in I\ss}\left(\dfrac{\langle\gc_{\tau_i}\nn, \tau_j\rangle +\langle\gc_{\tau_j}\nn, \tau_i\rangle }{2}\right)^2$, where $\{\tau_i: i\in I\ss\}$ is an o.n. basis of
$\HS$; see Definition \ref{movadafr}. Note also that
$\langle\gc_{\tau_i}\nn,
\tau_j\rangle^2=\frac{\langle\gc_{\tau_i}(\PH\nu),
\tau_j\rangle^2}{|\PH\nu|^2}$ for every $i, j\in I\ss$. Since $\frac{1}{|\PH\nu|^2}\in
L^1(S, {\perh})$, it follows that $\|S\cc\|^2\ngr\in L^1(S,
\perh)$. \item The 5th addend can be integrated
 over all of $S$, since the following estimate
\begin{eqnarray*}\left|\frac{\partial\varpi}{\partial\nn^\perp}\right|&=&
\left|\frac{\nn^\perp({\nu}_{_{\!T}})|\PH\nu|-\nn^\perp(|\PH\nu|){\nu}_{_{\!T}}}{|\PH\nu|^2}\right|
\lesssim \frac{1}{|\PH\nu|^2}\end{eqnarray*} holds true near $C_S$. Finally, the 6th term satisfies $\varpi^2 \leq
\frac{1}{|\PH\nu|^2}$.

\end{itemize}Using the previous remarks and the smoothness of $\varphi$ (over all of $S$), the thesis easily follows.
\end{proof}

\begin{corollario}Let $S\subset\mathbb{H}^n$ be a $\cont^2$-smooth
compact closed hypersurface.  Let $\vartheta_t$ be a normal
variation, having variation vector $W=\varphi\,|\PH\nu|\,
 \nu$, for some smooth function
 $\varphi:S\longrightarrow\R$. Then \begin{eqnarray}\label{Finale1v}I_S(W,\per)=- \int_{S}\MS
\varphi\,\perh.\end{eqnarray}If
 $\frac{1}{|\PH\nu|}\in L^2(S, {\perh})$ and $S$ has constant horizontal
mean curvature, then
\begin{eqnarray}\label{Finale2v}
II_S(W,\per)
=\int_{S}\left(-\MS\,\widetilde{W}(\varphi_t)\big|_{t=0}+
|\qq\varphi|^2 +\varphi^2\left(\MS^2-\|S\cc\|^2\ngr+
2\frac{\partial\varpi}{\partial\nn^\perp} -\frac{n+1}{2}\varpi^2
\right)\right)\perh.\,\,\end{eqnarray}
\end{corollario}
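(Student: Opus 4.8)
The plan is to read off both formulas from the machinery already assembled, so that essentially no new computation is required. For the first variation I would invoke Remark \ref{rvvf}: with the normal variation vector $W=\varphi\,|\PH\nu|\,\nu$ one has $\cn=\frac{\langle W,\nu\rangle}{|\PH\nu|}=\varphi$, whence substituting $\cn=\varphi$ into the already-established identity \eqref{fv} gives \eqref{Finale1v} at once.

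For the second variation the starting point is the explicit expression for $II_{S_\epsilon}(W,\per)$ recorded just before Lemma \ref{sv}, which splits into an interior integral over $S_\epsilon$ and a boundary integral over $\partial S_\epsilon=\partial\UU_\epsilon$. First I would observe that the two integrability hypotheses coincide: since $\perh=|\PH\nu|\,\sigma^{2n}\rr$, we have $\int_S|\PH\nu|^{-2}\,\perh=\int_S|\PH\nu|^{-1}\,\sigma^{2n}\rr$, so the assumption $\frac{1}{|\PH\nu|}\in L^2(S,\perh)$ is the same as $\frac{1}{|\PH\nu|}\in L^1(S,\sigma^{2n}\rr)$. Hence Lemma \ref{sv} applies and guarantees that the limit $II_S(W,\perh)=\lim_{\epsilon\to 0^+}II_{S_\epsilon}(W,\per)$ exists.

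It then remains to identify this limit with the single integral over all of $S$, and here I would reuse the estimates established inside the proof of Lemma \ref{sv}. The boundary integral tends to zero by (iii) of Remark \ref{carnei}, using the uniform bound on $\varphi\langle\qq\varphi,\eta\rangle$. For the interior integral, each integrand lies in $L^1(S,\perh)$: the term $-\MS\,\widetilde{W}(\varphi_t)|_{t=0}$ is integrable because $\MS$ is constant and $\varphi$ is smooth, $|\qq\varphi|^2$ is bounded, and the three remaining quantities $\|S\cc\|^2\ngr$, $\frac{\partial\varpi}{\partial\nn^\perp}$ and $\varpi^2$ are all dominated by $|\PH\nu|^{-2}\in L^1(S,\perh)$. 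Since $C_S$ is $\sigma^{2n}\rr$-null (Remark \ref{CSET}) and $S_\epsilon$ exhausts $S\setminus C_S$ as $\epsilon\to 0^+$, dominated convergence lets me pass the interior integral to the limit and thus obtain \eqref{Finale2v}. The only point requiring care is precisely this dominated-convergence step, but it is already settled by the integrability hypothesis through Lemma \ref{sv}; consequently there is no genuine obstacle here, the corollary being a repackaging of \eqref{fv} together with Lemma \ref{sv}.
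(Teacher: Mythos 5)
Your proof is correct and follows exactly the route the paper intends: the first formula is \eqref{fv} combined with the observation $\cn=\varphi$ from Remark \ref{rvvf}, and the second is precisely the content of Lemma \ref{sv} (whose hypothesis you correctly identify as equivalent to $\frac{1}{|\PH\nu|}\in L^2(S,\perh)$) together with the integrability estimates already established in its proof. The paper states this corollary without proof for exactly this reason, so your argument is a faithful reconstruction of the intended one.
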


Finally, we discuss another point of view, which can be used in order to extend Theorem \ref{12VF} even in more general situations. In particular, we would like to use (normal) variations which can be (possibly) singular at $C_S$.
For the sake of simplicity, \it we only consider the case $n>1$. \rm In view of Remark \ref{CSET}, this implies that $\dim   C_S<2n-2$.
As already said, the validity of the 1st and 2nd variation formulas for $\perh$ up to the characteristic set $C_S$, can be formulated in terms of a limit procedure.
More precisely, let $S$ be a compact hypersurface of class $\cont^2$ without boundary and
 let us set $S_\epsilon=S\setminus \UU_\epsilon$, where $\{\UU_\epsilon\}_{\epsilon\geq 0}$ is a family  of open subsets of $S$,
with (piecewise) $\cont^1$-smooth boundaries, such
that: {\rm(i)} $C_S\Subset\UU_\epsilon$ for
every $\epsilon>0$; \,\, {\rm(ii)}
$\sigma^{2n}\rr(\UU_\epsilon)\longrightarrow 0$ for
$\epsilon\rightarrow
0^+$; \,\, {\rm(iii)}
$\sigma^{2n-1}\rr(\partial\UU_\epsilon)\longrightarrow 0$ for
$\epsilon\rightarrow
0^+$. Note that these  sets shrink around $C_S$, as long as $\epsilon\rightarrow 0$. Furthermore, let $\vartheta_t$  be a normal variation of $S$ with variation vector $W=\varphi |\PH \nu| \nu$, for some function $\varphi:S\longrightarrow\R$. Nevertheless, \it we do not assume that $\varphi$ is smooth over all $S$, but only on $S \setminus C_S$. \rm Under these assumptions,
for every $\epsilon>0$ the 1st and 2nd variation formulas on the non-characteristic hypersurfaces $S_\epsilon$ are given  by
\begin{eqnarray*}I_{S_\epsilon}(W,\perh) &=& - \int_{S_\epsilon}\MS
\varphi \,\perh, \\
II_{S_\epsilon}(W,\per)&=& \int_{S_\epsilon}\left(-\MS\,\widetilde{W}(\varphi_t)\big|_{t=0}+
|\qq\varphi |^2+ \varphi^2\left( (\MS)^2-\|S\cc\|^2\ngr+
2\frac{\partial\varpi}{\partial\nn^\perp} -\frac{n+1}{2}\varpi^2
\right)\right)\perh\\&&-
\int_{\partial{S_\epsilon}} \left\langle \varphi\,\qq \varphi ,\eta\right\rangle\,|\PH\nu|\,\sigma^{2n-1}\rr.\end{eqnarray*}
This follows from Theorem \ref{12VF}, since $\vartheta_t$ is a normal variation. Now  consider  the  limits (if they exist):
\begin{eqnarray*}I_S(W,\per):=\lim_{\epsilon\rightarrow
0^+}I_{S_\epsilon}(W,\per),
\qquad II_S(W,\per):=\lim_{\epsilon\rightarrow 0^+}II_{S_\epsilon}(W,\per).\end{eqnarray*}
What is a sufficient condition for the existence of these limits? The previous analysis showed that if $\varphi$ is smooth on all of $S$, then the limits exist (For what concerns the 2nd variation, we also have to assume $\frac{1}{|\PH\nu|}\in L^2(S, {\perh})$.). However, it is enough to require that all integrands are continuous over all of $S$.

\begin{no}\label{notchi}
Let us set \begin{eqnarray*}
\chi_1&:=&-\MS
\varphi \,\perh,\\
 \chi_2&:=&\left(-\MS\,\widetilde{W}(\varphi_t)\big|_{t=0}+
|\qq\varphi |^2+ \varphi^2\left( (\MS)^2-\|S\cc\|^2\ngr+
2\frac{\partial\varpi}{\partial\nn^\perp} -\frac{n+1}{2}\varpi^2
\right)\right)\perh,\\ \chi_3 &:=&-\left\langle \varphi\,\qq \varphi ,\eta\right\rangle\,|\PH\nu|\,\sigma^{2n-1}\rr.
\end{eqnarray*}
\end{no}

\begin{Prop}\label{alterver2} Let $n>1$. Let $S\subset\mathbb H^n$ be a compact hypersurface of class $\cont^2$ without boundary and let $\vartheta_t$ be a normal variation of $S$. Let $W$ be the vector variation of  $\vartheta_t$ and assume that $W=\varphi |\PH \nu| \nu$ for some function $\varphi:S\longrightarrow\R$ which is $\cont^2$-smooth on $S\setminus C_S$. Furthermore, assume that
the differential forms $\chi_1, \chi_2, \chi_3$ are continuous on all of $S$.
Then \begin{eqnarray*} I_S(W,\per)&=&- \int_{S}\MS
\varphi\,\perh,\\II_S(W,\per)
&=&\int_{S}\left(-\MS\,\widetilde{W}(\varphi_t)\big|_{t=0}+
|\qq\varphi|^2 +\varphi^2\left(\MS^2-\|S\cc\|^2\ngr+
2\frac{\partial\varpi}{\partial\nn^\perp} -\frac{n+1}{2}\varpi^2
\right)\right)\perh.\end{eqnarray*}
\end{Prop}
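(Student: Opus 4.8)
The plan is to exploit the $\epsilon$-exhaustion $S_\epsilon = S \setminus \UU_\epsilon$ already introduced, where $\{\UU_\epsilon\}_{\epsilon\geq 0}$ is a family of open neighbourhoods of $C_S$ with piecewise $\cont^1$-smooth boundaries satisfying {\rm(i)} $C_S\Subset\UU_\epsilon$, {\rm(ii)} $\sigma^{2n}\rr(\UU_\epsilon)\to 0$, and {\rm(iii)} $\sigma^{2n-1}\rr(\partial\UU_\epsilon)\to 0$ as $\epsilon\to 0^+$. On each non-characteristic piece $S_\epsilon$ the variation $\vartheta_t$ is normal, so the formulas for $I_{S_\epsilon}$ and $II_{S_\epsilon}$ recorded just before Notation \ref{notchi} (a specialization of Theorem \ref{12VF}) apply verbatim and, in the notation of Notation \ref{notchi}, give $I_{S_\epsilon}(W,\per)=\int_{S_\epsilon}\chi_1$ and $II_{S_\epsilon}(W,\per)=\int_{S_\epsilon}\chi_2+\int_{\partial S_\epsilon}\chi_3$, where $\partial S_\epsilon=\partial\UU_\epsilon$. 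The whole proof then reduces to passing to the limit $\epsilon\to 0^+$ in these three integrals, and the continuity hypotheses on $\chi_1,\chi_2,\chi_3$ are tailored precisely so that this passage is legitimate.

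For the first variation I would argue by dominated convergence. Since $\chi_1$ is by assumption a continuous $2n$-form on the compact manifold $S$, its coefficient relative to $\perh$ is bounded, say by a constant $M$; as $\perh(S)<\infty$, the constant $M$ is an $\epsilon$-independent integrable majorant. The domains $S_\epsilon$ increase to $S\setminus C_S$, and $C_S$ is $\sigma^{2n}\rr$-null (Remark \ref{CSET}), hence $\perh$-null, so the indicators converge pointwise $\perh$-a.e. Dominated convergence then gives $\int_{S_\epsilon}\chi_1\to\int_{S\setminus C_S}\chi_1=\int_S\chi_1$, which is the claimed formula for $I_S(W,\per)$.

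For the second variation I would treat the bulk and boundary contributions separately. The bulk term $\int_{S_\epsilon}\chi_2\to\int_S\chi_2$ by exactly the same dominated-convergence argument, using continuity (hence boundedness) of $\chi_2$ on the compact $S$ and the $\perh$-nullity of $C_S$. For the boundary term I would observe that $\chi_3$ is, up to sign, the restriction to $\partial S_\epsilon$ of the $(2n-1)$-form $\beta:=-(\varphi\,\qq\varphi)\LL\perh$, since $(X\LL\perh)|_{\partial S_\epsilon}=\langle X,\eta\rangle|\PH\nu|\,\sigma^{2n-1}\rr$; the assumption that $\chi_3$ be continuous on all of $S$ is exactly the statement that $\beta$ extends continuously across $C_S$ and is therefore bounded in comass, say by $M'$. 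Combining this with {\rm(iii)} gives
\[\left|\int_{\partial S_\epsilon}\chi_3\right|\leq M'\,\sigma^{2n-1}\rr(\partial\UU_\epsilon)\longrightarrow 0,\]
so that $II_S(W,\per)=\lim_{\epsilon\to 0^+}\bigl(\int_{S_\epsilon}\chi_2+\int_{\partial S_\epsilon}\chi_3\bigr)=\int_S\chi_2$, as claimed.

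The conceptual content has been front-loaded into the continuity hypotheses, so the main point to get right is bookkeeping rather than a hard estimate: one must check that $\chi_1,\chi_2,\chi_3$ are genuinely $\epsilon$-independent forms on $S\setminus C_S$ (true, since $\MS$, $\varphi$, $\qq\varphi$, $\perh$ and $\beta$ are intrinsic to $S$ and do not see the auxiliary family $\UU_\epsilon$), and that each continuous form is bounded on the compact $S$, supplying the fixed majorant for dominated convergence and the comass bound for the boundary term. The only genuinely delicate step is the boundary term, where one cannot use boundedness of $\varphi\,\qq\varphi$ alone — this may blow up at $C_S$ while $|\PH\nu|$ degenerates — but must instead use boundedness of the product packaged in $\beta$, which is precisely what the continuity of $\chi_3$ provides, together with the vanishing of $\sigma^{2n-1}\rr(\partial\UU_\epsilon)$ in {\rm(iii)}.
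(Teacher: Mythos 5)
Your argument is correct and follows essentially the same route as the paper, whose own proof is just a two-line sketch asserting that continuity of $\chi_1,\chi_2,\chi_3$ makes the integrals over $S_\epsilon$ well-defined and that the boundary terms vanish because $\partial S_\epsilon$ shrinks onto the lower-dimensional set $C_S$. Your version fills in exactly the details the paper leaves implicit (dominated convergence for the bulk terms via boundedness of the continuous forms on the compact $S$, and the comass bound on $\beta=-(\varphi\,\qq\varphi)\LL\perh$ combined with condition (iii) for the boundary term), so it is a faithful, more complete rendering of the intended proof.
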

\begin{proof}It is enough to note that, if the differential forms $\chi_1, \chi_2, \chi_3$ are continuous on all of $S$, then the integrals $I_{S_\epsilon}(W,\perh),\,II_{S_\epsilon}(W,\perh),$ turn out to be well-defined (and finite) for every $\epsilon\geq 0$. The thesis follows since the boundaries $\partial S_\epsilon$ converge to the lower dimensional set $C_S$, as long as $\epsilon\rightarrow 0$.

\end{proof}

\subsection{1st and 2nd variation of volume, isoperimetric functional and the notion of
stability}\label{volvarIS}

Let $D\subset\mathbb{H}^n$ be a relatively compact domain with
$C^2$-smooth boundary  $S:=\partial D$. Let
$\imath_D:D\longrightarrow\mathbb{H}^n$ be the inclusion of $D$ in
$\mathbb{H}^n$ and let
 $\vartheta: ]-\epsilon,\epsilon[\times D
\rightarrow \mathbb{H}^n$ be a smooth map. We say that
$\vartheta$ is a {\it variation} of $\imath_D$ if the following
hold: {\rm(i)} every
$\vartheta_t:=\vartheta(t,\cdot):D\rightarrow\mathbb{H}^n$ is an
immersion;\,\, {\rm(ii)} $\vartheta_0=\imath_D$.

Let $\vartheta_t$ be a variation of $D$ with
variation vector $W=\vartheta_{\ast}\frac{\partial}{\partial
t}\big|_{t=0}$ and set
$\WW:=\vartheta_{\ast}\frac{\partial}{\partial t}$. The 1st and
 2nd variation formulas of  $\sigma^{2n+1}\rr\res D$, denoted as $I_D(W,\sigma^{2n+1}\rr)$ and
$II_D(W,\sigma^{2n+1}\rr)$, are given by
$$I_D(W,\sigma^{2n+1}\rr):
=\frac{d}{dt}\left(\int_{D}\vartheta_t^\ast\left(\sigma^{2n+1}\rr\right)\right)\bigg|_{t=0},\qquad
II_D(W,\sigma^{2n+1}\rr):=
\frac{d^2}{dt^2}\left(\int_{D}\vartheta_t^\ast\left(\sigma^{2n+1}\rr\right)\right)\bigg|_{t=0}.$$
Setting
$\left(\sigma^{2n+1}\rr\right)_t:=\vartheta_t^\ast\left(\sigma^{2n+1}\rr\right)$,
we see that
$$\frac{d}{dt}\left(\sigma^{2n+1}\rr\right)_t
=\mathcal{L}_{\WW}\left(\sigma^{2n+1}\rr\right)_t,\qquad\frac{d^2}{dt^2}\left(\sigma^{2n+1}\rr\right)_t=
\mathcal{L}_{\WW}\left(\mathcal{L}_{\WW}\left(\sigma^{2n+1}\rr\right)_t\right).$$By
Cartan's formula for the Lie derivative we compute
\begin{eqnarray*}\mathcal{L}_{\WW}\left(\sigma^{2n+1}\rr\right)_t=d(\WW\LL\left(\sigma^{2n+1}\rr\right)_t)+
\WW\LL\underbrace{d\left(\sigma^{2n+1}\rr\right)_t}_{=0}=\div \WW
\left(\sigma^{2n+1}\rr\right)_t\end{eqnarray*}and by applying Stokes' Theorem
we get that
\begin{eqnarray}\label{1varvol}I_D(W,\sigma^{2n+1}\rr)=\int_D\div
W\,\sigma^{2n+1}\rr=\int_{S}\cn\,\per,\end{eqnarray} where
$\cn=\frac{\langle W ,\nu
 \rangle}{|\PH\nu|}$. For what concerns the 2nd variation of $\sigma^{2n+1}\rr\res
 D$, let us compute
\begin{eqnarray*}
\mathcal{L}_{\WW}\left(\mathcal{L}_{\WW}\left(\sigma^{2n+1}\rr\right)_t\right)&=&
\underbrace{\mathcal{L}_{\WW}\left(d\left(\WW\LL\left(\sigma^{2n+1}\rr\right)_t\right)\right)
=d\left(\mathcal{L}_{\WW}\left(\WW\LL\left(\sigma^{2n+1}\rr\right)_t\right)\right)}_{\mbox{\tiny{since}}\,\,\,\mathcal{L}\circ
d= d\circ\mathcal{L}}\\&=&d\left(\mathcal{L}_{\WW}\left(\cn_t\,
\perth\right)\right)=d\left(\left(\WW(\cn_t)
-(\MS)_t\cn_t^2\right)\perth\right),\end{eqnarray*}where
$\cn_t:=\frac{\langle \WW ,\nu^t
 \rangle}{|\PH\nu^t|},$  $\nu^t$ is
  the Riemannian unit normal along
   $S_t:=\vartheta_t(S)$ and $(\MS)_t$ is the horizontal mean curvature of
   $S_t$. Note that the last identity follows from the
   infinitesimal 1st variation of $\perh$. Using again Stokes' Theorem, it turns out that the 2nd variation of
    $\sigma^{2n+1}\rr\res D$ can be written out as a boundary integral along $S$, i.e.
    \begin{eqnarray}\label{2vavol}II_D(W,\sigma^{2n+1}\rr)=\int_S\left(\widetilde{W}
  (\cn_t)\big|_{t=0}
-\MS\cn^2\right)\,\per.\end{eqnarray}
\begin{corollario}\label{bho}
Let $D\subset\mathbb{H}^n$ be a $C^2$-smooth compact domain. Suppose
$S=\partial D$ has constant horizontal mean curvature and let $\frac{1}{|\PH\nu|}\in
L^2(S,\perh)$. Let $\vartheta_t$ be a volume preserving normal
variation of $S$ having variation vector $W=\varphi\,|\PH\nu|\,
 \nu$ for some smooth function
 $\varphi:S\longrightarrow\R$. Then\begin{equation}\label{volpres2v}
II_S(W,\per)=\int_{S}\left( |\qq \varphi|^2+ \varphi^2\left(
-\|S\cc\|^2\ngr+ 2\frac{\partial\varpi}{\partial\nn^\perp}
-\frac{n+1}{2}\varpi^2 \right)\right)\perh.\end{equation}
\end{corollario}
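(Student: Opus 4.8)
The plan is to read off $II_S(W,\per)$ from the general 2nd variation formula \eqref{Finale2v} and then annihilate the two mean-curvature terms by exploiting that the variation is volume-preserving while $\MS$ is constant. Throughout I use Remark \ref{rvvf}, which for the normal variation vector $W=\varphi\,|\PH\nu|\,\nu$ gives $\cn=\varphi$ and $\cn_t=\varphi_t$.

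First I would invoke \eqref{Finale2v}: since $S$ has constant horizontal mean curvature and $\frac{1}{|\PH\nu|}\in L^2(S,\perh)$ (exactly the hypothesis required in Lemma \ref{sv}), one has
\[
II_S(W,\per)=\int_{S}\left(-\MS\,\widetilde{W}(\varphi_t)\big|_{t=0}+
|\qq\varphi|^2 +\varphi^2\left(\MS^2-\|S\cc\|^2\ngr+
2\frac{\partial\varpi}{\partial\nn^\perp} -\frac{n+1}{2}\varpi^2
\right)\right)\perh.
\]
Comparing with the claimed \eqref{volpres2v}, the only discrepancy is the pair of terms $-\MS\,\widetilde{W}(\varphi_t)|_{t=0}$ and $\MS^2\varphi^2$, so it suffices to prove that their integrals cancel.

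Next I would use volume-preservation. The function $t\mapsto\int_D\vartheta_t^\ast(\sigma^{2n+1}\rr)$ is constant, hence $II_D(W,\sigma^{2n+1}\rr)=0$. By the 2nd variation of volume \eqref{2vavol} with $\cn=\varphi$, and pulling the constant $\MS$ out of the integral, this reads
\[
\int_S\widetilde{W}(\varphi_t)\big|_{t=0}\,\perh=\MS\int_S\varphi^2\,\perh.
\]
Substituting into the displayed expression for $II_S(W,\per)$, the first addend integrates to $-\MS^2\int_S\varphi^2\,\perh$, which exactly cancels the contribution $+\MS^2\int_S\varphi^2\,\perh$ of the $\MS^2\varphi^2$ term; what remains is precisely \eqref{volpres2v}.

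The algebra is short, so the only genuine care is the behaviour at the characteristic set: formula \eqref{Finale2v} already incorporates the limit procedure through the assumption $\frac{1}{|\PH\nu|}\in L^2(S,\perh)$, while in \eqref{2vavol} the measure $\perh$ vanishes on $C_S$, so the characteristic locus contributes nothing to either integral. Granting the two variational formulas, the cancellation is purely algebraic and rests only on $\MS$ being constant; I expect no obstacle beyond this bookkeeping.
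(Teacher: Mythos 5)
Your argument is correct and coincides with the paper's own proof: the paper likewise substitutes the volume-preservation identity $\int_S\big(\MS\widetilde{W}(\varphi_t)\big|_{t=0}-\MS^2\varphi^2\big)\,\perh=0$, obtained from \eqref{2vavol} with $\MS$ constant, into \eqref{Finale2v} to cancel the two mean-curvature terms. Your additional remarks on the role of the hypothesis $\frac{1}{|\PH\nu|}\in L^2(S,\perh)$ and the characteristic set are consistent with how the paper justifies \eqref{Finale2v} via Lemma \ref{sv}.
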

\begin{proof}For volume preserving
variations, using the 2nd variation formula of volume yields
\[\int_S\left(\MS\widetilde{W}
  (\varphi_t)\big|_{t=0}
-\MS^2\varphi^2\right)\,\per=0\]and the thesis follows by substituting the last identity into
\eqref{Finale2v}.
\end{proof}

We also have the following \textquotedblleft alternative\textquotedblright  version.

\begin{corollario}\label{bho2} Let $n>1$.
Let $D\subset\mathbb{H}^n$ be a $C^2$-smooth compact domain with boundary
$S=\partial D$ of constant horizontal mean curvature. Let $\vartheta_t$ be a volume preserving normal
variation of $S$, having variation vector $W=\varphi\,|\PH\nu|\,
 \nu$, for some  smooth function
 $\varphi:S\longrightarrow\R$ on $S\setminus C_S$. Furthermore, assume that
the differential forms $\chi_1, \chi_2, \chi_3$ are continuous on all of $S$; see Notation \ref{notchi}.
Then \begin{eqnarray*}II_S(W,\per)
=\int_{S}\left(
|\qq\varphi|^2 +\varphi^2\left( -\|S\cc\|^2\ngr+
2\frac{\partial\varpi}{\partial\nn^\perp} -\frac{n+1}{2}\varpi^2
\right)\right)\perh.\end{eqnarray*}
\end{corollario}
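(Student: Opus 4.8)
The plan is to mimic the proof of Corollary \ref{bho} almost verbatim, the only change being that formula \eqref{Finale2v} (which requires $\frac{1}{|\PH\nu|}\in L^2(S,\perh)$) is now replaced by Proposition \ref{alterver2} (which only requires $\chi_1,\chi_2,\chi_3$ to be continuous on all of $S$). First I would apply Proposition \ref{alterver2} to record the 2nd variation of the $\HH$-perimeter in the form
\[II_S(W,\per)=\int_{S}\left(-\MS\,\WW(\varphi_t)\big|_{t=0}+ |\qq\varphi|^2 +\varphi^2\left(\MS^2-\|S\cc\|^2\ngr+ 2\frac{\partial\varpi}{\partial\nn^\perp} -\frac{n+1}{2}\varpi^2 \right)\right)\perh,\]
which is legitimate precisely because the three forms $\chi_1,\chi_2,\chi_3$ are assumed continuous up to $C_S$.

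Next I would bring in volume-preservation. Since the enclosed volume $\int_D\vartheta_t^\ast(\sigma^{2n+1}\rr)$ is constant in $t$, its second derivative at $t=0$ vanishes, i.e. $II_D(W,\sigma^{2n+1}\rr)=0$. Substituting $\cn=\varphi$ (see Remark \ref{rvvf}) into the 2nd variation formula of volume \eqref{2vavol} then gives $\int_S(\WW(\varphi_t)\big|_{t=0}-\MS\varphi^2)\per=0$, and, multiplying by the constant $\MS$, $\int_S(\MS\,\WW(\varphi_t)\big|_{t=0}-\MS^2\varphi^2)\per=0$. Inserting this identity into the displayed formula, the two curvature terms $-\MS\,\WW(\varphi_t)\big|_{t=0}$ and $+\MS^2\varphi^2$ cancel upon integration, leaving precisely the asserted formula; this is the same algebraic step performed in Corollary \ref{bho}.

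The hard part will be justifying that the boundary representation \eqref{2vavol} of the 2nd variation of volume persists up to the characteristic set, since here $\varphi$ is only assumed $\cont^2$-smooth on $S\setminus C_S$. I would treat this exactly as in Remark \ref{carnei} and Proposition \ref{alterver2}: apply Stokes' Theorem on the exhausting family $S_\epsilon=S\setminus\UU_\epsilon$, which produces the volume identity with an additional boundary term over $\partial\UU_\epsilon$, and then let $\epsilon\to 0^+$. The spurious term is killed by $\int_{\partial\UU_\epsilon}|\PH\nu|\,\sigma^{2n-1}\rr\to 0$ together with the convention $\perh\res C_S=0$, while the interior integrand $(\WW(\varphi_t)\big|_{t=0}-\MS\varphi^2)\per$ is integrable because the assumed continuity of $\chi_2$ forces the possible singularities of $\varphi$ near $C_S$ to be reabsorbed by the vanishing factor in $\perh=|\PH\nu|\,\sigma^{2n}\rr$. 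Once this limiting step is in place, no further computation is needed.
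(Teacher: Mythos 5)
Your proposal is correct and follows essentially the same route as the paper, which states Corollary \ref{bho2} as the ``alternative'' version of Corollary \ref{bho}: one substitutes the volume-preservation identity $\int_S\bigl(\MS\,\widetilde{W}(\varphi_t)\big|_{t=0}-\MS^2\varphi^2\bigr)\,\perh=0$ into the second variation formula supplied by Proposition \ref{alterver2} instead of \eqref{Finale2v}. Your additional care in justifying the boundary representation \eqref{2vavol} up to $C_S$ via the exhaustion $S_\epsilon$ is consistent with (indeed slightly more explicit than) the limiting argument the paper relies on throughout.
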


The {\it Isoperimetric Functional} in our context can naturally be defined by
\begin{equation}\label{IPP}J (D):=\frac{\per(\partial D)}
{\left(\sigma^{2n+1}\rr(D)\right)^{1-\frac{1}{Q}}},\end{equation}where
$D$ varies over bounded domains in $\mathbb{H}^n$ having
$\mathbf{C}^2$-smooth boundaries. So let $\vartheta_t$ be a
variation of $D$ with variation vector $W$. Differentiating
\eqref{IPP} along the flow $\vartheta_t$, using \eqref{1varvol}
and  \eqref{Finale1v}, yields
\begin{equation}\label{ikik}\frac{d}{d t}J
(\vartheta_t(D))\bigg|_{t=0}=-\frac{1}{\left({\sigma^{2n+1}\rr(D)}\right)^
{1-\frac{1}{Q}}} \int_{\partial D}\MS \cn\,\perh -
\frac{Q-1}{Q}\frac{\per(D)}{\left({\sigma^{2n+1}\rr(D)}\right)^{2-\frac{1}{Q}}}
\int_{\partial D}\cn\,\perh.\end{equation}  By choosing a {\it
volume-preserving variation}. This means that the flow
$\vartheta_t$ associated with $W$ does not change the
 volume, i.e. ${\sigma^{2n+1}\rr(\vartheta_t(D))}={\sigma^{2n+1}\rr(D)}$ for every
$t\in]-\epsilon,\epsilon[$. It follows that the last integral vanishes and, by means of
the Fundamental Lemma of Calculus of Variations, we obtain
the following:
\begin{corollario}\label{mscost}Let $D\subset\mathbb{H}^n$ be a relatively compact domain with
$C^2$-smooth boundary and assume that $D$ is a critical point of
the functional $J(D)$ under volume-preserving variations. Then
$\MS$ must be constant on $\partial D\setminus C_{\partial D}$.
\end{corollario}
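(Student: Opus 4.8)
The plan is to read the conclusion off directly from the first-variation identity \eqref{ikik} of the Isoperimetric Functional $J$, once the volume constraint is taken into account. First I would exploit the hypothesis that the competing variations $\vartheta_t$ are volume-preserving, i.e. $\sigma^{2n+1}\rr(\vartheta_t(D))\equiv\sigma^{2n+1}\rr(D)$. Differentiating this identity at $t=0$ and invoking the first-variation formula of the top-dimensional volume \eqref{1varvol}, namely $I_D(W,\sigma^{2n+1}\rr)=\int_{\partial D}\cn\,\perh$, gives the constraint $\int_{\partial D}\cn\,\perh=0$. Since the second summand on the right-hand side of \eqref{ikik} is a fixed multiple of $\int_{\partial D}\cn\,\perh$, it vanishes identically along every volume-preserving variation, and \eqref{ikik} collapses to
\[\frac{d}{dt}J(\vartheta_t(D))\big|_{t=0}=-\frac{1}{\left(\sigma^{2n+1}\rr(D)\right)^{1-\frac{1}{Q}}}\int_{\partial D}\MS\,\cn\,\perh.\]

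Next I would feed in the criticality hypothesis: the left-hand side vanishes for every volume-preserving variation of $D$, whence $\int_{\partial D}\MS\,\cn\,\perh=0$ for the normal component $\cn=\langle W,\nu\rangle/|\PH\nu|$ of each admissible variation vector $W$. To upgrade this to a pointwise statement I would use the realizability lemma proved just above Remark \ref{rvvf}: given any piecewise-smooth $\varphi:\partial D\to\R$ with $\int_{\partial D}\varphi\,\sigma^{2n}\rr=0$, there is a volume-preserving normal variation with variation vector $W=\varphi\,\nu$, for which $\cn=\varphi/|\PH\nu|$. Substituting this choice and recalling $\perh=|\PH\nu|\,\sigma^{2n}\rr$, the vanishing integral becomes $\int_{\partial D}\MS\,\varphi\,\sigma^{2n}\rr=0$ for every such mean-zero $\varphi$. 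The Fundamental Lemma of the Calculus of Variations then forces $\MS$ to equal its $\sigma^{2n}\rr$-average, i.e. a fixed constant, $\sigma^{2n}\rr$-almost everywhere; since $\MS=-\div\cc\nn$ is continuous on $\partial D\setminus C_{\partial D}$ (compare \eqref{gghh}), this constant value is attained at every non-characteristic point, which is the assertion.

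The one delicate point, which I expect to be the only genuine obstacle rather than the variational algebra, is the interaction with the characteristic set $C_{\partial D}$, where $\MS$ and $\cn$ are undefined and $|\PH\nu|\to 0$. The clean way around this is to restrict, in the last two steps, to test functions $\varphi$ compactly supported in $\partial D\setminus C_{\partial D}$ (still of global mean zero); such functions are abundant because $\partial D\setminus C_{\partial D}$ has full measure, and for them $\cn=\varphi/|\PH\nu|$ is bounded and no integrability issue arises near $C_{\partial D}$. Since $\mathcal{H}_{CC}^{Q-1}(C_{\partial D})=0$ and $C_{\partial D}$ is $\sigma^{2n}\rr$-null, it contributes nothing to any of the integrals above, so the Fundamental Lemma applied on $\partial D\setminus C_{\partial D}$ yields $\MS=\mathrm{const}$ almost everywhere there, and continuity of $\MS$ then propagates the equality to the whole non-characteristic part. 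This characteristic-set bookkeeping is handled exactly by the exhaustion $\{\UU_\epsilon\}$ already employed in the preceding sections.
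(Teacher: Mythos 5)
Your argument is correct and is essentially the paper's own proof: the volume constraint annihilates the second term of \eqref{ikik}, criticality forces $\int_{\partial D}\MS\,\cn\,\perh=0$, and the Fundamental Lemma of the Calculus of Variations (via the realizability lemma for mean-zero normal variations) yields that $\MS$ is constant on $\partial D\setminus C_{\partial D}$. Your additional bookkeeping near $C_{\partial D}$ — testing against $\varphi$ supported away from the characteristic set and then using continuity of $\MS$ — is a sound way to make precise what the paper leaves implicit.
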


\begin{oss}Let $D$ be a critical point of $J(\cdot)$ under volume-preserving variations.  Using
\eqref{ikik} yields
$$\sigma^{2n+1}\rr(D)\,\MS=- \frac{Q-1}{Q}\per(S)$$which implies that
there are no closed compact  $\HH$-minimal hypersurfaces in $\mathbb H^n$; see
\cite{HP2}, \cite{RR}.\end{oss}

Using volume-preserving normal variations, we also get that
\[\frac{d^2}{d^2t}J (\vartheta_t(D))\Big|_{t=0}=\frac{II_{\partial D}(W,
\perh)}{\left({\sigma^{2n+1}\rr(D)}\right)^{1-\frac{1}{Q}}}.\]

The last computation motivates the following:
\begin{Defi}[Stability I]\label{strdef}Let $D\subset\mathbb{H}^n$ be a compact domain with
$C^2$-smooth boundary $S=\partial D$ such that $\frac{1}{|\PH\nu|}\in
L^2(S, \perh)$. We assume that $D$ is a critical
point of $J(D)$  under volume-preserving
variations. We say that $S$ is  a {\rm stable
bounding hypersurface} if $II_{S}(W,\per)> 0$ for every
non-zero volume-preserving normal variation $\vartheta_t$  of
$S$, having variation vector $W=\varphi\,|\PH\nu|\,
 \nu$, where $\varphi:S\longrightarrow\R$ is any smooth function on $S$.
\end{Defi}

Moreover, we propose a weakening of the notion of stability.
\begin{Defi}[Local Stability]\label{strdefloc} Let $D\subset\mathbb{H}^n, n>1,$ be a compact domain with
$C^2$-smooth boundary $S=\partial D$ and assume that $D$ is a critical
point of $J(D)$  under volume-preserving
variations. We say that $S$ is  a {\rm locally stable
bounding hypersurface} if for each $p\in S$ there exists a neighborhood $\Om\subseteq S$ of $p$ such that
 $II_{\Om}(W,\per)> 0$ for every
non-zero volume-preserving normal variation $\vartheta_t$  of
$\Om$ having variation vector $W=\varphi\,|\PH\nu|\,
 \nu$ such that $\varphi: \Om\longrightarrow\R$ is any smooth compactly supported function on $\Om$.
\end{Defi}

\begin{oss}[Radial variations]\label{radstrdef}   Let $D\subset\mathbb{H}^n$ be a compact domain with radial  symmetry with respect to the vertical direction $T$. In this case, a \textquotedblleft natural\textquotedblright class of normal variations can be defined by using radially symmetric functions on $S=\partial D$. More precisely, a useful \textquotedblleft stability test\textquotedblright\,    for the domain $D$
 is that of being stable in the sense of Definition \ref{strdef} for all smooth radial function $\varphi:S\longrightarrow\R$. In this case, we shall say that $D$ is {\rm radially stable}. Clearly, radial stability is just a necessary condition for stability.
\end{oss}

 By applying Corollary \ref{bho2}, the notion of stability can be further generalized.
\begin{Defi}[Stability II]\label{strdef2} Let $D\subset\mathbb{H}^n, n>1,$ be a compact domain with
$C^2$-smooth boundary $S=\partial D$ and assume that $D$ is a critical
point of $J(D)$  under volume-preserving
variations. We say that $S$ is  a {\rm stable
bounding hypersurface} if $II_{S}(W,\per)> 0$ for every
non-zero volume-preserving normal variation $\vartheta_t$  of
$S$ having variation vector $W=\varphi\,|\PH\nu|\,
 \nu$, where $\varphi: S\longrightarrow\R$ is any smooth function on $S\setminus C_S$ such that the differential forms  $\chi_1, \chi_2, \chi_3$ turn out to be continuous on all of $S$; see Notation \ref{notchi}.
\end{Defi}

\section{Isoperimetric Profiles and Stability}\label{STABs} \rm

We already know that
$\Sph$ is (the boundary of) a critical point under
volume preserving variations of the isoperimetric functional $J(\cdot)$. Furthermore, it is not difficult to see that
$\frac{1}{|\PH\nu|}\in L^2(\Sph, {\perh})$; see also Remark \ref{cazfri}.  We start with the following:
\begin{teo}\label{stabilityt} Let $n=1$. The Heisenberg Isoperimetric profile $\Spu$ is a  stable bounding hypersurface in the sense of both Definition \ref{strdef} and  Definition \ref{strdef2}.\end{teo}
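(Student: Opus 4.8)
The plan is to feed the profile $\Spu$ into the volume-preserving second variation formula of Corollary \ref{bho} and to exploit the key point that, exactly when $n=1$, the zero-order term drops out; what remains is a nonnegative Dirichlet energy whose kernel I then analyse through the one-dimensional characteristic foliation of $\Spu$. First I would collect the relevant scalars on $\Spu$ for $n=1$: by Lemma \ref{SeB}, $\|S\cc\|^2\ngr=Q=4$; by \eqref{derombar}, $\frac{\partial\varpi}{\partial\nn^\perp}=\frac{2}{\rho^2}$; and $\varpi^2=4\frac{1-\rho^2}{\rho^2}$. Since also $\frac{n+1}{2}=1$, the potential in \eqref{volpres2v} collapses,
\begin{equation*}
-\|S\cc\|^2\ngr+2\frac{\partial\varpi}{\partial\nn^\perp}-\frac{n+1}{2}\varpi^2=-4+\frac{4}{\rho^2}-\frac{4(1-\rho^2)}{\rho^2}=0,
\end{equation*}
which is the general coefficient $-\frac{Q-4}{\rho^2}$ evaluated at $Q=4$. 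As $\frac{1}{|\PH\nu|}\in L^2(\Spu,\perh)$, Corollary \ref{bho} then yields, for every volume-preserving normal variation $W=\varphi\,|\PH\nu|\,\nu$,
\begin{equation*}
II_{\Spu}(W,\perh)=\int_{\Spu}|\qq\varphi|^2\,\perh\ \geq\ 0 .
\end{equation*}

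The whole content is thus the strict inequality. Because $n=1$, the bundle $\HS$ is one-dimensional, spanned by $\nn^\perp$, so $\qq\varphi=(\nn^\perp\varphi)\,\nn^\perp$ and $II_{\Spu}=0$ forces $\nn^\perp\varphi=0$ $\perh$-a.e., hence everywhere on $\Spu\setminus C_{\Spu}$ by continuity of $\qq\varphi$ there. By Remark \ref{njn} the integral curves of $\nn^\perp$ are CC-geodesics lying on $\Spu$; writing $z=\rho(\cos\theta,\sin\theta)$ one finds on $\Spu^{+}$ that $\dot\rho=-\sqrt{1-\rho^2}$ and $\dot\theta=1$ (and symmetrically on $\Spu^{-}$), so each integral curve runs from the South pole across the equator to the North pole. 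Hence $\varphi$ is constant along every one of them. Since all these arcs terminate at the two common poles, a $\varphi$ that extends continuously up to the poles must take a single value along all of them, so $\varphi$ is constant on $\Spu$; the constraint $\int_{\Spu}\varphi\,\perh=0$ then gives $\varphi\equiv0$. For Definition \ref{strdef}, where $\varphi$ is smooth on all of $\Spu$, this continuity at the poles is automatic, and stability follows.

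For Definition \ref{strdef2} the algebra is identical — Corollary \ref{bho2} gives the same $II_{\Spu}=\int_{\Spu}|\qq\varphi|^2\perh$ once $\varphi$ is admissible — so only the kernel step must be redone, and this is where I expect the real difficulty. Off the poles the equation $\nn^\perp\varphi=0$ is solved by $\varphi=f(\theta+\arcsin\rho)$ for \emph{arbitrary} $f$, and one can pick $f$ non-constant with $\int_{\Spu}\varphi\,\perh=0$; such $\varphi$ are smooth on $\Spu\setminus C_{\Spu}$ but genuinely discontinuous at the poles, their limit depending on the direction of approach. The crux is to show these are barred by admissibility: while $\chi_1$ and the zero-order part of $\chi_2$ extend continuously by $0$ (because $\perh$ vanishes to order $\rho^2$ at the poles while $\varphi$ stays bounded), I expect the second-order term $-\MS\,\widetilde W(\varphi_t)\big|_{t=0}$ inside $\chi_2$ to be discontinuous at $C_{\Spu}$ for such $\varphi$, thereby forcing $\varphi$ to extend continuously to the poles and hence, by the meridian argument above, to be constant and then $\equiv0$. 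Pinning down this continuity failure of $\chi_2$ — absent in the Euclidean model, where the sphere has no characteristic set — is the main obstacle to a fully rigorous proof of stability in the sense of Definition \ref{strdef2}.
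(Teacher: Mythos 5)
Your proof is correct and is essentially the paper's own argument: the same cancellation $-\|S\cc\|^2\ngr+2\frac{\partial\varpi}{\partial\nn^\perp}-\varpi^2=0$ via Lemma \ref{SeB} and \eqref{derombar}, the same reduction to $II_{\Spu}(W,\perh)=\int_{\Spu}|\qq\varphi|^2\,\perh$, and the same kernel analysis along the characteristic foliation joining the poles. Your residual worry about Definition \ref{strdef2} --- that $\nn^\perp\varphi=0$ admits non-constant solutions on $\Spu\setminus\{\mathcal N,\mathcal S\}$ which are discontinuous at the poles, so the argument needs the admissibility conditions to exclude them --- is a genuine point, but be aware that the paper's proof does not address it either: it runs the identical computation once, tacitly assumes $\varphi$ extends continuously to the poles, and asserts the conclusion for both definitions (note also that Definition \ref{strdef2} and Corollary \ref{bho2} are formally stated only for $n>1$, so their invocation at $n=1$ is already slightly loose in the paper itself).
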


\begin{proof} Let  $\vartheta_t$ be any non-zero volume-preserving normal variation having variation vector $W$.
Thus, using Lemma \ref{SeB} together with \eqref{derombar} and the fact that
$(\varpi^\pm)^2 =4\frac{{1-\rho^2}}{\rho^2}$,
yields\[-\|S\cc\|^2\ngr+ 2\frac{\partial\varpi}{\partial\nn^\perp}
-\varpi^2 =0.\]By applying formula \eqref{volpres2v} we obtain
\begin{equation*}
II_{\Spu}(W,\per)=\int_{{\Spu}}|\qq\varphi|^2\perh\geq
0.\end{equation*}If $\int_{{\Spu}}|\qq\varphi|^2\perh=0$, then
$|\qq\varphi|=|\nn^\perp\varphi|=0$. In particular, it follows that $\varphi$ is
constant along any leaf of the so-called  \textit{characteristic foliation} of $\Spu$.
Note also that each leaf joins together North and South poles of
$\Spu$. Hence $\varphi$ is constant along $\Spu$ and since
$\int_{\Spu}\varphi\,\perh=0$, we finally get that $\varphi=0$. Therefore
\[II_{\Spu}(W,\per)>0\]for every non-zero normal variation, as wished; see also Proposition 1.22 in \cite{Monted}.
\end{proof}

\it From now on, we shall study the case  $n>1$. \rm In the general case, we are not able to give a complete proof of the statement valid for $n=1$. Nevertheless, below we shall obtain some partial stability results.\\

\rm  We choose a (non-zero) volume-preserving normal
variation $\vartheta_t$ with variation vector $W$. This means that $\WW$ is parallel to $\nu^t$ for every
$t\in]-\epsilon, +\epsilon[$. As in
Remark \ref{rvvf}, we also assume that $W=\varphi |\PH\nu| \nu$ for some (at least piecewise)
smooth function $\varphi:S\longrightarrow\R$. As already said, this choice implies that $\cn=\frac{\langle W,
\nu\rangle}{|\PH\nu|}=\varphi$. From \eqref{derombar}, Lemma \ref{SeB} and the identity $(\varpi^\pm)^2
=4\frac{{1-\rho^2}}{\rho^2}$, it follows that\[-\|S\cc\|^2\ngr+
2\frac{\partial\varpi}{\partial\nn^\perp} -\frac{n+1}{2}\varpi^2
=-\frac{Q-4}{\rho^2}.\]Hence, using formula \eqref{volpres2v}
yields
\begin{equation}\label{hurin}
II_{\Sph}(W,\per)=\int_{\Sph}\left(|\qq\varphi|^2-\frac{Q-4}{\rho^2}\varphi^2\right)\,\perh\end{equation}
for every smooth function $\varphi:\Sph\longrightarrow \R$ such that
$\int_{\Sph}\varphi\,\perh=0$. Note
that $Q-4=2n-2$.

\begin{oss} In order to study the positivity of the 2nd variation of $\Sph$,
we  shall
set\begin{equation}\label{funz2v}\mathfrak{F}(\varphi):=\int_{\Sph}\left(|\qq\varphi|^2-
\frac{Q-4}{\rho^2}\varphi^2\right)\,\perh\end{equation} and study
the sign of the functional $\mathfrak{F}(\cdot)$ for functions
belonging to the class $\varPhi(\Sph)$ of \it admissible functions; \rm see Definition \ref{fi}.
\end{oss}

Roughly speaking,
we are considering  functions $\varphi\in\cont\ss^2(\Sph\setminus\{\mathcal{N},
\mathcal{S}\})$ which can be \textquotedblleft integrated by parts\textquotedblright\,on $\Sph$.
In using this class, we are including possibly singular
solutions at the poles ${\mathcal N}, {\mathcal
S}$ of $\Sph$, which are the only characteristic points of $\Sph$. Nevertheless, we may apply the horizontal Green's formulas (iii)-(vi) stated in Corollary \ref{Properties}.
Moreover, it is not difficult to realize that the \textquotedblleft right functional
class\textquotedblright   where studying this problem is given by
$$\varPhi_0(\Sph):=\left\{\varphi\in L^2\left(\Sph, \frac{\perh}{\rho^2}\right) :\,\, \varphi\neq 0,\,\,\, |\qq \varphi|\in L^2(\Sph, {\perh}),\,\,
 \int_{\Sph}\varphi\,\perh=0\right\}.$$
For simplicity, in the sequel we shall restrict our study
to functions belonging to the class $$\varPhi_1(\Sph):=\varPhi(\Sph)\cap\varPhi_0(\Sph).$$

\begin{oss}\label{1eigen}Integration by parts in
\eqref{funz2v}
yields\[\mathfrak{F}(\varphi)=-\int_{\Sph}\varphi\left(\lh\varphi
+ \frac{2n-2}{\rho^2}\varphi\right)\,\perh\] for every
$\varphi\in\varPhi_1(\Sph)$. Hence, it becomes natural to study the associated equation
\[\lh\varphi
+ \frac{2n-2}{\rho^2}\varphi=C \qquad(C\in\R).\]Note that we can also consider a non-zero constant $C\in\R$, because $\int_{\Sph}\varphi\,\perh=0$. We already know a
solution to this equation when $C=0$. Indeed, Lemma \ref{ccla}
says that $\Delta\ss\kappa=-\frac{2n-4}{\rho^2}\kappa$, where we recall that
$\kappa=\kappa^\pm=\pm\frac{\sqrt{1-\rho^2}}{\rho}$ along $\Sph^\pm$. Since $\varpi=2\kappa$, using \eqref{derombar}
yields $$\lh\kappa+\frac{2n-2}{\rho^2}\kappa=0$$ which shows that  $\kappa$ is an
eigenfunction, with eigenvalue $\mu=2n-2$, of the  closed \textquotedblleft singular\textquotedblright
eigenvalue problem:$$\lh\varphi + \frac{\mu}{\rho^2}\varphi=0\qquad \mbox{for $\varphi\in\varPhi_1(\Sph),\,(\mu\in\R_+)$}.$$ \end{oss}\vspace{0.5cm} For the sake of completeness, let us first compute 1st and 2nd
variation of $\mathfrak{F}(\cdot)$. So let
$t\in]-\epsilon, \epsilon[$, let $\varphi_1,
\varphi_2\in\varPhi_1(\Sph)$ and consider the \textquotedblleft perturbed  functional\textquotedblright \,
  $\mathfrak{F}\left(\varphi+
t\varphi_1+\frac{t^2}{2}\varphi_2\right)$. Then, the 1st and 2nd
variation of $\mathfrak{F}(\cdot)$ can easily be obtained by
computing the following derivatives:
\[\mathfrak{F}'(\varphi):=\frac{d}{d t}\mathfrak{F}\left(\varphi+
t\varphi_1+\frac{t^2}{2}\varphi_2\right)\bigg|_{t=0},\qquad
\mathfrak{F}''(\varphi):=\frac{d^2}{d
t^2}\mathfrak{F}\left(\varphi+
t\varphi_1+\frac{t^2}{2}\varphi_2\right)\bigg|_{t=0}.\]We thus
have\begin{eqnarray*}\mathfrak{F}'(\varphi)&=&2\int_{\Sph}\left(\langle\qq\varphi,
\qq\varphi_1\rangle-\frac{2n-2}{\rho^2}\varphi\varphi_1\right)\perh,\\\mathfrak{F}''(\varphi)&=&
2\int_{\Sph}\left(\langle\qq\varphi,
\qq\varphi_2\rangle+\langle\qq\varphi_1,
\qq\varphi_1\rangle-\frac{2n-2}{\rho^2}(\varphi_1^2+\varphi\varphi_2)\right)\perh,
 \end{eqnarray*}and, by integrating by parts, we obtain
 \begin{eqnarray*}\mathfrak{F}'(\varphi)&=&-2\int_{\Sph}\varphi_1\left(\lh\varphi+\frac{2n-2}{\rho^2}\varphi
 \right)\perh,\\\mathfrak{F}''(\varphi)&=&
2\int_{\Sph}\left(-\varphi_2\left(\lh\varphi+\frac{2n-2}{\rho^2}\varphi\right)
+ |\qq\varphi_1|^2-\frac{2n-2}{\rho^2}\varphi_1^2\right)\perh.
 \end{eqnarray*}It follows that any critical point of
 $\mathfrak{F}$ (i.e. any solution $\varphi\in\varPhi_1(\Sph)$ to
 $\mathfrak{F}'(\varphi)=0$) solves the equation:
 \[\lh\varphi+\frac{2n-2}{\rho^2}\varphi=C\]for some constant $C\in\R$. Hence, a critical
 point of $\mathfrak{F}$ is a {\it stable minimum} if, and only if, one has\[\mathfrak{F}''(\varphi)=2
 \int_{\Sph}\left(|\qq\varphi_1|^2-\frac{2n-2}{\rho^2}\varphi_1^2\right)\perh\geq
 0\]for all $\varphi_1\in \varPhi_1(\Sph)$. As a straightforward consequence, {\it positivity
 of $\mathfrak{F}$ is equivalent to  positivity of
 $\mathfrak{F}''$}.\\

\begin{oss}Unlike the Riemannian case, for which we refer the reader to
\cite{DC}, the knowledge of the
minimum eigenvalue of $\lh$ on $\Sph$ is not sufficient to solve this problem.
More precisely,
Rayleigh's Inequality says that
\[\lambda_1\leq
\frac{\int_{\Sph}|\qq\varphi|^2\,\perh}{\int_{\Sph}\varphi^2\,\perh}
\qquad\forall\,\,\varphi\in\varPhi(\Sph),\,\,\int_{\Sph}\varphi\,\perh=0,\]where $\lambda_1$ denotes the first non-trivial
eigenvalue of  $\lh$ on $\Sph$. This
implies that\begin{equation*} II_{\Sph}(W,\per)\geq\int_{\Sph}\varphi
^2\left(\lambda_1-\frac{Q-4}{\rho^2}\right)\,\perh,\end{equation*} with
strict inequality unless $\varphi $ is an eigenfunction associated
to $\lambda_1$. But the last integral it is not necessarily
greater than zero.\end{oss}

From now on, we will study the closed eigenvalue problem (singular
at $\mathcal{N}, \mathcal{S}$):
\begin{equation}\label{1eqfinStab}\lh\varphi +
\frac{\mu}{\rho^2}\varphi=0\qquad \forall\,\,\varphi\in
\varPhi_1(\Sph).\end{equation}Here we have to remark that all solutions to this equation must satisfy the following further
compatibility condition:$$\int_{\Sph}\frac{\varphi}{\rho^2}\,\perh=0.$$To see this, it is
sufficient to integrate \eqref{1eqfinStab} over $\Sph$ and
use $\int_{\Sph}\lh\varphi\,\perh=0$. Furthermore, it is a simple consequence
of the horizontal Green formulas discussed in Section \ref{IBPAA},
that the following hold:
\begin{itemize}\item all eigenvalues are positive real numbers;
\item all eigenfunctions can be chosen to be real-valued; \item
eigenfunctions corresponding to distinct eigenvalues are
orthogonal with respect to the \textquotedblleft weighted\textquotedblright inner product
$(\phi_1,\phi_2)_0:=\int_{\Sph}\frac{\phi_1\,\phi_2}{\rho^2}\perh$;\item
all eigenfunctions can be chosen to be orthogonal (note that
eigenvalues with multiplicity will have several eigenfunctions)
with respect to $(\cdot,\cdot)_0$.\end{itemize}\vspace{0.5cm}

\noindent Set now
\[\mathfrak{G}(\varphi):=\frac{\int_{\Sph}|\qq\varphi|^2\,\perh}
{\int_{\Sph}\frac{\varphi^2}{\rho^2}\,\perh}\qquad \forall \,\,\varphi\in \varPhi_1(\Sph).\]
\begin{lemma}\label{qw0} Let
$\mu_1$ be the first eigenvalue of \eqref{1eqfinStab} and consider
the minimization problem:\[{\bf m}:=\min_{\varphi\in
\varPhi_1(\Sph)}\mathfrak{G}(\varphi).\]Assume that the minimum is
achieved, but probably not unique. Then $\bf{m}$ is the first
eigenvalue of   \eqref{1eqfinStab}  and any  minimizer $f$ of $\mathfrak G(\cdot)$ is a corresponding
eigenfunction.
\end{lemma}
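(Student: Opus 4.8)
The statement is the standard variational (Rayleigh quotient) characterization of the first eigenvalue, adapted to the weighted, singular setting on $\Sph$, so the plan is to prove the two inequalities $\mathbf{m}\le\mu_1$ and $\mathbf{m}\ge\mu_1$ and to identify the minimizer with an eigenfunction along the way. Write $N(\varphi):=\int_{\Sph}|\qq\varphi|^2\,\perh$ and $D(\varphi):=\int_{\Sph}\frac{\varphi^2}{\rho^2}\,\perh$, so that $\mathfrak{G}=N/D$. The whole argument rests on the Green-type identity $\int_{\Sph}\langle\qq\varphi,\qq\psi\rangle\,\perh=-\int_{\Sph}\psi\,\lh\varphi\,\perh$, valid for $\varphi,\psi\in\varPhi_1(\Sph)$: these functions are admissible and $\Sph$ is closed, so the boundary contributions in Corollary \ref{Properties}(v) vanish (they are in fact limits of integrals over $\partial\UU_\epsilon$ shrinking to the poles $\mathcal{N},\mathcal{S}$).

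For $\mathbf{m}\le\mu_1$ I would check that every eigenfunction realizes its eigenvalue as a value of $\mathfrak{G}$. If $g\in\varPhi_1(\Sph)$ solves $\lh g+\frac{\mu}{\rho^2}g=0$ (more generally $=$ const, which is the same by Remark \ref{1eigen} since $\int_{\Sph}g\,\perh=0$), then multiplying by $g$ and integrating gives $\int_{\Sph}g\,\lh g\,\perh=-\mu\,D(g)$, because the constant term contributes $\int_{\Sph}g\,\perh=0$; combined with the Green identity $N(g)=-\int_{\Sph}g\,\lh g\,\perh$ this yields $\mathfrak{G}(g)=N(g)/D(g)=\mu$. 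Taking $g$ a first eigenfunction gives $\mathbf{m}\le\mathfrak{G}(g)=\mu_1$. For the reverse inequality I would use that the minimizer $f$ is a critical point of $\mathfrak{G}$. Perturbing by $f+s\phi$ with $\phi\in\varPhi_1(\Sph)$ and computing $\frac{d}{ds}\mathfrak{G}(f+s\phi)\big|_{s=0}=0$ gives, after clearing denominators, the weak Euler--Lagrange equation $N'(f)[\phi]=\mathbf{m}\,D'(f)[\phi]$, i.e. $\int_{\Sph}\langle\qq f,\qq\phi\rangle\,\perh=\mathbf{m}\int_{\Sph}\frac{f\phi}{\rho^2}\,\perh$ for every admissible mean-zero $\phi$. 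Integrating by parts once more turns this into $\int_{\Sph}\phi\big(\lh f+\frac{\mathbf{m}}{\rho^2}f\big)\,\perh=0$ for all such $\phi$, whence $\lh f+\frac{\mathbf{m}}{\rho^2}f$ is constant on $\Sph$. By the convention of Remark \ref{1eigen} this exhibits $f$ as an eigenfunction of \eqref{1eqfinStab} with eigenvalue $\mathbf{m}$, so $\mathbf{m}\ge\mu_1$; combining the two inequalities gives $\mathbf{m}=\mu_1$ and shows that any minimizer is a corresponding eigenfunction.

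The routine ingredients --- the quotient rule for the first variation and the two integrations by parts --- are straightforward; the delicate points are all tied to the singularity of $\frac{1}{\rho^2}$ and of the admissible fields at the poles. First, one must ensure the perturbed functions $f+s\phi$ stay in $\varPhi_1(\Sph)$: the mean-zero constraint is linear, and admissibility is preserved because $\varPhi(\Sph)$ is defined through continuity of the forms $\alpha_X,d\alpha_X$ in Definition \ref{fi}, so every integration by parts used in the variation is licit. Second, and this is the main obstacle, one must justify that no boundary term survives at $\mathcal{N},\mathcal{S}$ in either Green identity; this is exactly the content of admissibility, and it is what forces the restriction to $\varPhi_1(\Sph)$ rather than to all of $\cont^2\ss(\Sph\setminus\{\mathcal{N},\mathcal{S}\})$. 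The leftover additive constant in the Euler--Lagrange equation is harmless precisely because the trial and test functions are $\perh$-mean-zero, which is the same mechanism that made $\mathfrak{G}(g)=\mu$ hold in the first step.
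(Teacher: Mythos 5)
Your proposal is correct and follows essentially the same route as the paper: the first variation $\frac{d}{ds}\mathfrak{G}(f+s\phi)\big|_{s=0}=0$ plus one integration by parts shows the minimizer solves the Euler--Lagrange equation with eigenvalue $\mathbf{m}$, and the Rayleigh-quotient identity $\mathfrak{G}(f_i)=\mu_i$ for eigenfunctions forces $\mathbf{m}=\mu_1$. The only (cosmetic) difference is that you explicitly track the additive constant arising from testing only against mean-zero functions and dispose of it via Remark \ref{1eigen}, whereas the paper passes directly from $\int_{\Sph}\phi\bigl(\lh f+\mathbf{m}\frac{f}{\rho^2}\bigr)\,\perh=0$ to $\lh f+\mathbf{m}\frac{f}{\rho^2}=0$; your version is, if anything, the more careful one.
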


\begin{proof}If $f:\Sph\longrightarrow\R$ is a minimizer in $\varPhi_1(\Sph)$, then
$\mathfrak{G}(f)\leq\mathfrak{G}(\varphi)$ for all $\varphi\in
\varPhi_1(\Sph)$. In this case, the real-valued function
$g(\epsilon)=\mathfrak{G}(f+\epsilon\varphi)$ has a minimum at
$\epsilon=0$ and hence $g'(0)=0$. We have
\begin{equation}g'(0)= 2 \frac{
\left(\int_{\Sph}\frac{f^2}{\rho^2}\,\perh
\right)\left(\int_{\Sph}\langle\qq f, \qq \varphi\rangle\,\perh
\right)-\left(\int_{\Sph}\frac{f\varphi}{\rho^2}\,\perh
\right)\left(\int_{\Sph}|\qq f|^2\,\perh \right)}{\left
(\int_{\Sph}\frac{f^2}{\rho^2}\,\perh\right)^2}=0.\end{equation}
Therefore \[\int_{\Sph}\langle\qq f, \qq
\varphi\rangle\,\perh={\bf
m}\,\int_{\Sph}\frac{f\varphi}{\rho^2}\,\perh\]and since
$-\int_{\Sph}\varphi\lh f\,\perh=\int_{\Sph}\langle\qq f, \qq
\varphi\rangle\,\perh$, it follows
that\[\int_{\Sph}\varphi\left(\lh f+{\bf
m}\frac{f}{\rho^2}\right)\,\perh=0\]for all $\varphi\in \varPhi_1(\Sph)$.
Hence
\[\lh f+{\bf m}\frac{f}{\rho^2}=0,\] i.e. $f$ is eigenfunction  of
\eqref{1eqfinStab} with eigenvalue ${\bf m}$. In order to prove
the last claim, let $\mu_i$ be another eigenvalue of
\eqref{1eqfinStab} with corresponding eigenfunction $f_i$. Then
\[{\bf m}\leq \mathfrak{G}(f_i)=-\frac{\int_{\Sph} f_i\lh f_i\,\perh}
{\int_{\Sph}\frac{\varphi^2}{\rho^2}\,\perh}=\mu_i\]and hence
${\bf m}=\mu_1$.\end{proof}

We already know that the function
$\kappa=\kappa^\pm=\pm\frac{\sqrt{1-\rho^2}}{\rho}$ is an
eigenfunction of \eqref{1eqfinStab} with corresponding
eigenfunction $\mu=2n-2$; see  Remark \ref{1eigen}.
{\it Below we shall show that $\mu=\mu^{rad}_1$}, where $\mu^{rad}_1$ denotes the 1st (radial) eigenvalue of \eqref{1eqfinStab}
in the class of all radial  $\varphi\in\varPhi_1(\Sph)$.\\

Along the lines of \cite{Monted}, where a similar method is adopted to study the  \textquotedblleft closed eigenvalue problem  on $\Sph$\textquotedblright for the equation $$\lh \varphi+\lambda\varphi=0,$$as a first step, we shall study the equation
\eqref{1eqfinStab} for radial functions on $\Sph$. Hence, we have
to solve the following O.D.E.:
\begin{equation}\label{radial1eqfinStab}
\varphi_{\rho\rho}''(1-{\rho}^2)+\frac{\varphi'_{\rho}}{\rho}\left(2n
-(2n+1){\rho^2}\right)+
\frac{\mu}{\rho^2}\varphi=0,\end{equation}where $\varphi$ is now a radial function belonging to
$\varPhi_1(\Sph)$; see Remark \ref{RemFinT}. This can be done, exactly as in
\cite{Monted}, by studying the restrictions $\varphi^\pm$ of
$\varphi$ to the hemispheres $\Sph^\pm$, together with some suitable boundary
conditions. For the sake of simplicity, in doing this, we shall assume  $\varphi\in\cont^2(]0, 1])$.

\begin{oss}Remind  that
\[\perh\res\Sph^\pm=\frac{\rho}{2\sqrt{1-\rho^2}}\,dz\res B_1(0),\]
and using spherical coordinates $(\rho,\xi)\in[0,
1]\times\mathbb{S}^{2n-1}$ on $B_1(0)\subsetneq\R^{2n}$ yields
\[dz=\rho^{2n-1}\,d\rho\wedge
d\sigma_{\mathbb{S}^{2n-1}}(\xi).\]The integral conditions
required for belonging to $\varPhi_1(\Sph)$  can then be
rephrased in terms of one-dimensional integrals over the interval
$[0, 1]$, endowed with a \textquotedblleft weight-function\textquotedblright induced by
$\perh$.
\end{oss}

\begin{lemma}\label{qw}Let $\psi$ be an eigenfunction of
\eqref{1eqfinStab} with corresponding eigenvalue $\mu$ and denote
by $\psi_0$ its spherical mean, i.e.
\[\psi_0:=\int_{\mathbb{S}^{2n-1}}\psi\,d\sigma_{\mathbb{S}^{2n-1}}.\]If $\psi_0\neq 0$, then $\psi_0$ is an eigenfunction
of \eqref{radial1eqfinStab} with corresponding eigenfunction
$\mu$.
\end{lemma}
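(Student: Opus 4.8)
The plan is to average the eigenvalue equation \eqref{1eqfinStab}, that is $\lh\psi+\frac{\mu}{\rho^2}\psi=0$, over the sphere $\mathbb{S}^{2n-1}$ against $d\sigma_{\mathbb{S}^{2n-1}}$, and to show that every term carrying an angular ($\zeta$- or $\mathbb{S}^{2n-1}$-)derivative integrates to zero, so that only the radial part survives and reproduces \eqref{radial1eqfinStab} for $\psi_0$. First I would insert the explicit form \eqref{ffs} of $\lh$ from Remark \ref{RemFinT}. Writing $z=\rho\,\xi$ with $\xi\in\mathbb{S}^{2n-1}$, one has $\zeta=z^\perp/\rho=\xi^\perp$, which is independent of $\rho$; hence $\partial_\rho$ commutes with the derivation $\zeta(\cdot)$, and the coefficients of the Radial Operator depend on $\rho$ alone. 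Since $\psi$ is $\cont^2$ away from the poles, I may differentiate under the integral sign, obtaining
\[
\int_{\mathbb{S}^{2n-1}}\!\left[(1-\rho^2)\psi''_{\rho\rho}+\frac{2n-(2n+1)\rho^2}{\rho}\psi'_\rho+\frac{\mu}{\rho^2}\psi\right]d\sigma_{\mathbb{S}^{2n-1}}=(1-\rho^2)(\psi_0)''_{\rho\rho}+\frac{2n-(2n+1)\rho^2}{\rho}(\psi_0)'_\rho+\frac{\mu}{\rho^2}\psi_0.
\]

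The crux is the vanishing of the Mixed Derivatives term and of the whole Angular Operator upon integration. Here I would use that $\mathbb{S}^{2n-1}$ is closed together with the identity \eqref{rftr}, $\div_{\TT\mathbb{S}^{2n-1}}\zeta=0$: by the divergence theorem, for every $\cont^1$ function $g$ on the sphere,
\[
\int_{\mathbb{S}^{2n-1}}\zeta(g)\,d\sigma_{\mathbb{S}^{2n-1}}=-\int_{\mathbb{S}^{2n-1}}g\,\div_{\TT\mathbb{S}^{2n-1}}\zeta\,d\sigma_{\mathbb{S}^{2n-1}}=0.
\]
Applying this with $g=\psi$ kills the term $\psi'_\zeta$; with $g=\partial_\rho\psi$ and the $\rho$-$\zeta$ commutation it kills $\psi''_{\zeta\rho}=\zeta(\partial_\rho\psi)$; and with $g=\zeta(\psi)$ it kills $\psi''_{\zeta\zeta}=\zeta(\zeta(\psi))$. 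Finally $\int_{\mathbb{S}^{2n-1}}\Delta_{\mathbb{S}^{2n-1}}\psi\,d\sigma_{\mathbb{S}^{2n-1}}=0$, since the integral of a Laplacian over a closed manifold vanishes. Thus the Mixed and Angular contributions drop out entirely.

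Collecting the surviving terms, the spherical average of \eqref{1eqfinStab} becomes
\[
(1-\rho^2)(\psi_0)''_{\rho\rho}+\frac{2n-(2n+1)\rho^2}{\rho}(\psi_0)'_\rho+\frac{\mu}{\rho^2}\psi_0=0,
\]
which is exactly \eqref{radial1eqfinStab} for the radial function $\psi_0$. Hence, provided $\psi_0\not\equiv0$, the spherical mean $\psi_0$ is a radial eigenfunction of \eqref{1eqfinStab} with the same eigenvalue $\mu$, as claimed. The main obstacle is precisely the disappearance of the angular terms: everything rests on the divergence-free identity \eqref{rftr} and on the fact that $\zeta=\xi^\perp$ does not depend on $\rho$, which is what legitimizes passing $\partial_\rho$ through the $\zeta$-derivatives before invoking the divergence theorem. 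A secondary point to check is that $\psi_0$ still obeys the integral constraints defining membership in $\varPhi_1(\Sph)$, which follows by integrating those constraints over $\mathbb{S}^{2n-1}$ and using the Cauchy--Schwarz inequality.
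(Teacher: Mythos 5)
Your proposal is correct and follows essentially the same route as the paper: both average the explicit expression \eqref{ffs} for $\lh$ over $\mathbb{S}^{2n-1}$ and use the identity \eqref{rftr} together with the divergence theorem on the closed sphere to annihilate the mixed and angular terms, leaving the radial operator acting on $\psi_0$. Your additional remarks on the $\rho$-independence of $\zeta$ and on verifying the integral constraints for $\psi_0$ merely make explicit what the paper leaves implicit.
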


\begin{proof}Analogous to the proof of Claim 2.23 in
\cite{Monted}. First, note that
$$\int_{\Sph}\psi_0\,\perh=\int_{\Sph}\int_{\mathbb{S}^{2n-1}}\psi\,\left(d\sigma_{\mathbb{S}^{2n-1}}\wedge\perh\right)=
\int_{\mathbb{S}^{2n-1}}\underbrace{\Big(\int_{\Sph}\psi
\,\perh\Big)}_{=0}\,d\sigma_{\mathbb{S}^{2n-1}}=0.$$By making use
of \eqref{ffs} (see Remark \ref{RemFinT}) we have
\begin{eqnarray*}\lh\psi&=& (1-\rho^2)\psi''_{\rho\rho}+\frac{2n
-(2n+1)\rho^2}{\rho}\psi'_\rho-
 2\rho\sqrt{1-\rho^2}\psi''_{\zeta \rho} \\&&+\frac{1}{\rho^2}\Delta_{\mathbb{S}^{2n-1}}\psi
-(1-\rho^2)\psi''_{\zeta\zeta}-(Q-1)
\sqrt{1-\rho^2}\psi'_\zeta.\end{eqnarray*}Integrating this
expression along $\mathbb{S}^{2n-1}$, using \eqref{rftr} and the
Divergence Theorem for the Sphere ${\mathbb{S}^{2n-1}}$, yields
\begin{equation}\label{hgty}\int_{\mathbb{S}^{2n-1}}\lh \psi\,d\sigma_{\mathbb{S}^{2n-1}}=
\int_{\mathbb{S}^{2n-1}}\left((1-\rho^2)\psi''_{\rho\rho}+\frac{2n
-(2n+1)\rho^2}{\rho}\psi'_\rho\right)\,d\sigma_{\mathbb{S}^{2n-1}}.\end{equation}
Furthermore, one has
\begin{eqnarray*}\lh\psi_0&=&(1-{\rho}^2)\frac{\partial^2\psi_0}{\partial\rho^2}+\frac{2n
-(2n+1)\rho^2}{\rho}\frac{\partial\psi_0}{\partial\rho}\\&=&\int_{\mathbb{S}^{2n-1}}\left((1-\rho^2)\psi''_{\rho\rho}+\frac{2n
-(2n+1)\rho^2}{\rho}\psi'_\rho\right)\,d\sigma_{\mathbb{S}^{2n-1}}
.\end{eqnarray*}We therefore get that
$$\lh\psi_0=\int_{\mathbb{S}^{2n-1}}\lh \psi
\,d\sigma_{\mathbb{S}^{2n-1}}=-\mu\int_{\mathbb{S}^{2n-1}}\frac{\psi}{\rho^2}\,d\sigma_{\mathbb{S}^{2n-1}}
=-\mu\frac{\psi_0}{\rho^2}$$which proves the thesis.
\end{proof}

Unfortunately, the spherical mean $\psi_0$ of any eigenfunction $\psi$ can be equal to $0$ and so, in general,  we cannot conclude that there are no other eigenvalues, apart from the radial eigenvalues. We need something different.\\

In the next Lemma \ref{qw2} we will use Frobenious' Method; see \cite{GWo}.
Note that if we attempt
 to find the solution of equation \eqref{radial1eqfinStab} in the form of a power series, we have to
 employ the Laurent expansion around $t=0$. So let $m\in \mathbb{Z}$ and assume that
\begin{eqnarray}\label{asasasasasa}\varphi(\rho)=
\rho^{-m}\sum_{l=0}^{+\infty}a_l\rho^l=\sum_{l=0}^{+\infty}a_l\rho^{l-m}.\end{eqnarray}
\begin{lemma}\label{qw2}There exist solutions to \eqref{radial1eqfinStab} of the form
\eqref{asasasasasa} if, and only
if, one has\[\mu_m=m(2n-(m+1))\quad\mbox{for
 every}\,\,\, m=1,...,2n -1.\]These numbers can be
 eigenvalues of \eqref{radial1eqfinStab}, associated with radial eigenfunctions
  belonging to the class $\varPhi_1(\Sph)$,  only if  $m=1,...,n-1$.
 In particular, the first eigenvalue of \eqref{radial1eqfinStab} turns out to be  $\mu_1=Q-4$ and its associated
eigenfunction is given, up to constants, by $\varphi_1=\kappa$; see Remark \ref{1eigen}.
\end{lemma}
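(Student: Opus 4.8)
The plan is to apply Frobenius' method to the ordinary differential equation \eqref{radial1eqfinStab} directly, treating the pole $\rho=0$ as a regular singular point. First I would substitute the Laurent ansatz \eqref{asasasasasa} into \eqref{radial1eqfinStab} and collect powers of $\rho$. A single monomial $\rho^{k}$ contributes $[k(k-1)+2nk+\mu]\rho^{k-2}-k(k+2n)\rho^{k}$ to the left-hand side, so writing $\varphi=\sum_{l\ge0}a_l\rho^{l-m}$ with $a_0\neq0$, the lowest-order term $\rho^{-m-2}$ (coming from $l=0$, $k=-m$) yields the indicial equation $m(m+1)-2nm+\mu=0$, that is $\mu=\mu_m=m(2n-(m+1))$. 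This already gives the \emph{only if} direction: a solution of the form \eqref{asasasasasa} can exist only when $\mu$ equals one of these values, and imposing positivity $\mu_m=m(2n-1-m)\ge0$ confines $m$ to $\{1,\dots,2n-1\}$.

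For the converse I would fix $\mu=\mu_m$ and factor the indicial polynomial as $P(k)=k^2+(2n-1)k+\mu=(k+m)(k+2n-1-m)$, so that $P(l-m)=l(l+2n-1-2m)$. Matching the coefficient of $\rho^{l-m-2}$ then gives the two-step recurrence $a_l\,l(l+2n-1-2m)=(l-2-m)(l-2-m+2n)\,a_{l-2}$. The relation at $l=1$ reads $a_1(2n-2m)=0$, so we may take $a_1=0$ and then all odd coefficients vanish; for even $l>0$ the factor $l(l+2n-1-2m)$ is a nonzero integer (here $l$ is even while $l+2n-1-2m$ is odd), hence the even coefficients are unambiguously determined from $a_0$. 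This produces a genuine solution $\varphi=\rho^{-m}f(\rho^2)$, establishing existence. The delicate Frobenius point, namely a possible logarithmic term or an obstruction when the two exponents $-m$ and $-(2n-1-m)$ differ by an integer, is resolved precisely by this parity observation: since $2n-1-2m$ is odd, the surviving even chain never meets the resonance. The behavior at $\rho=1$ and the matching of the restrictions $\varphi^\pm$ across the equator of $\Sph$ are handled exactly as in \cite{Monted}.

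It remains to select which of these solutions lie in $\varPhi_1(\Sph)$. Using $\perh\res\Sph^\pm=\frac{\rho}{2\sqrt{1-\rho^2}}\,dz$ together with $dz=\rho^{2n-1}\,d\rho\wedge d\sigma_{\mathbb{S}^{2n-1}}$, near the poles one has $\perh\sim\rho^{2n}\,d\rho$. For a radial function $\tau_i(\varphi)=\varphi'_\rho\,z_i/\rho$ and $|z\ss|^2=\rho^2(1-\rho^2)$ (as in Lemma \ref{ccla}) give $|\qq\varphi|^2=(\varphi'_\rho)^2(1-\rho^2)$. Hence for $\varphi\sim\rho^{-m}$ both $\int\frac{\varphi^2}{\rho^2}\,\perh$ and $\int|\qq\varphi|^2\,\perh$ behave near $0$ like $\int_0\rho^{2n-2m-2}\,d\rho$, which is finite if and only if $m\le n-1$; this is the asserted restriction $m=1,\dots,n-1$. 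Finally, since $\mu_m=m(2n-1-m)$ is increasing on $\{1,\dots,n-1\}$ (its vertex sits at $m=n-\frac12$), the smallest admissible eigenvalue is $\mu_1=2n-2=Q-4$, whose eigenfunction is $\kappa$ by Remark \ref{1eigen} and Lemma \ref{ccla}.

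The main obstacle I expect is not the indicial computation but the careful Frobenius bookkeeping at the exponents that differ by an integer, together with the verification that the series actually defines an element of $\varPhi_1(\Sph)$: one must check integrability at the equator $\rho=1$ and the admissible gluing of the two hemispheres $\Sph^\pm$, so that the abstract power-series solution descends to a bona fide radial eigenfunction on $\Sph$.
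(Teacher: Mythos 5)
Your proof is correct and follows essentially the same route as the paper's: Frobenius' method applied to \eqref{radial1eqfinStab}, the indicial equation yielding $\mu_m=m(2n-(m+1))$, the two-step recurrence for the coefficients, and the weighted $L^2$ conditions near the poles forcing $m\le n-1$. Your additional observations --- the parity argument ruling out a resonance between the exponents $-m$ and $-(2n-1-m)$, and the monotonicity of $\mu_m$ on $\{1,\dots,n-1\}$ identifying $\mu_1=Q-4$ as the smallest admissible eigenvalue --- merely make explicit details the paper leaves implicit.
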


\begin{proof}
Since $\varphi'(\rho)=\sum_{l=0}^{+\infty}(l-m)a_l\rho^{{l-m}-1}$
  and $\varphi''(\rho)=\sum_{l=0}^{+\infty}(l-m)({l-m}-1)a_l\rho^{{l-m}-2}$,
  substituting these expressions into \eqref{radial1eqfinStab}
  yields
\begin{eqnarray*}\sum_{l=0}^{+\infty}\left((l-m)({l-m}-1)a_l
(1-\rho^2)+(l-m)a_l\big((2n) -(2n+1){\rho^2}\big)+\mu
a_l\right)\rho^{l-m-2}=0.\end{eqnarray*}So we get that \begin{eqnarray*}\sum_{l=0}^{+\infty}\underbrace{a_l\left((l-m)({l-m}-1+2n)+\mu\right)}_{=:\alpha_l}\rho^{{l-m}-2}=
\sum_{l=0}^{+\infty}\underbrace{a_l\left((l-m)({l-m}+ 2n
\right)}_{=:\beta_l}\rho^{{l-m}}.\end{eqnarray*}From this identity
we infer a system of necessary conditions on the coefficients of
the Laurent expansion of $\varphi$. More precisely, we must
have\begin{eqnarray}\label{arra12}\alpha_0=\alpha_1=0 \quad\mbox{and}\quad
\alpha_{l+2}=\beta_l\quad\mbox{for all}\,\,\, l\in
\mathbb{N}.\end{eqnarray}Since
$\alpha_0=a_0\left(m(m+1-2n)+\mu\right)$ and
$\alpha_1=a_1(-(m-1)(2n-m)+\mu)$ we obtain either
\begin{eqnarray}\label{arra13}a_0=0
 \quad\mbox{or}\quad \mu= m(2n-1-m),\end{eqnarray} or
\begin{eqnarray}\label{arra13}a_1=0 \quad\mbox{or}\quad
\mu=(m-1)(2n-m);\end{eqnarray}furthermore
\[a_{l+2}=a_l\frac{(l-m)(l+2n-m)}{(l+2-m)(l+2n+1-m)+\mu}\qquad\mbox{for all}\,\,\, l\in
\mathbb{N}.\]Note that this procedure  makes possible to write down the solutions
in terms of recurrence relations. From \eqref{arra12} we get that,
if $a_0=a_1=0$, then all coefficients must be zero. Since $\mu>0$, assuming $a_0\neq 0$
(and therefore, $a_1=0$), yields $m\in[2, 2n-1]$, while assuming
$a_1\neq 0$ (and therefore, $a_0=0$) yields $m\in[1, 2n-2]$. This
proves the first claim.\\
\indent For what concerns the second claim, note that any
(radial) solution $\varphi$ belongs to $\varPhi_1(\Sph)$
 only if
$$\varphi\in L^2\left([0, 1], \frac{\rho^{2n-2}}{2\sqrt{1-\rho^2}}\,d\rho
\right),\qquad \varphi'\in L^2\left([0, 1],
\frac{\rho^{2n}}{2\sqrt{1-\rho^2}}\,d\rho \right).$$ Therefore, by an
elementary computation, we get that it must be
\[2n-2(m+1)>-1\Longleftrightarrow m\leq n-1.\]Finally, the last
claim was already known; see Remark \ref{1eigen}.
\end{proof}

Let us state a first consequence. By making use of Lemma \ref{qw0} and Lemma \ref{qw2},
we get that
\[{\int_{\Sph}|\qq\varphi|^2\,\perh}\geq
(2n-2){\int_{\Sph}\frac{\varphi^2}{\rho^2}\,\perh}\qquad\mbox{for
all radial}\,\,\varphi\in \varPhi_1(\Sph).\]Therefore
\[II_{\Sph}(W,\per)=\int_{\Sph}\left(|\qq\varphi|^2-\frac{Q-4}{\rho^2}\varphi^2\right)\,\perh\geq 0\]
for all radial function $\varphi\in \varPhi_1(\Sph)$ and, more precisely,  $II_{\Sph}(W,\per)>0$, unless $\varphi$ is an
eigenfunction of $\mu_1=Q-4$. The radial eigenfunction of $\mu_1$, up to
constants, is  the function
$\kappa^\pm=\pm\frac{\sqrt{1-\rho^2}}{\rho}$ which does not satisfy
Definition \ref{strdef}, because it is singular at the poles. This
proves the following:
\begin{Prop}[Radial stability]The Isoperimetric Profile $\Sph$ turns out to be \rm radially stable \it in the sense of Remark \ref{radstrdef}. More precisely, let $\vartheta_t$ be any normal variation of\, $\Sph$, having variation vector
$W=\varphi |\PH\nu| \nu$, where $\varphi\in\varPhi_1(\Sph)$ is radial. Then, we have $II_{\Sph}(W,\per)>0$ and  $II_{\Sph}(W,\per)=0$ if, and only if, $\varphi$ is an eigenfunction for the first eigenvalue  $\mu_1^{rad}:=\mu_1=Q-4$ of equation \eqref{radial1eqfinStab}.
\end{Prop}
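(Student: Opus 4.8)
The plan is to reduce the statement entirely to the spectral analysis already carried out, combining the explicit second variation formula with the variational characterization of the first radial eigenvalue. First I would recall that for a radial $\varphi\in\varPhi_1(\Sph)$ with $\int_{\Sph}\varphi\,\perh=0$ the second variation reduces, by \eqref{hurin}, to $II_{\Sph}(W,\per)=\mathfrak{F}(\varphi)=\int_{\Sph}\bigl(|\qq\varphi|^2-\frac{Q-4}{\rho^2}\varphi^2\bigr)\,\perh$, so that radial stability is equivalent to the nonnegativity of $\mathfrak{F}$ on the radial subclass.

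The core of the argument is the Poincar\'e-type inequality $\int_{\Sph}|\qq\varphi|^2\,\perh\geq(Q-4)\int_{\Sph}\frac{\varphi^2}{\rho^2}\,\perh$ for all radial $\varphi\in\varPhi_1(\Sph)$. I would obtain this by combining Lemma \ref{qw2} with Lemma \ref{qw0}. Lemma \ref{qw2}, via Frobenius' method applied to the radial equation \eqref{radial1eqfinStab}, identifies the smallest admissible radial eigenvalue as $\mu_1^{rad}=Q-4=2n-2$, realized (up to constants) by $\kappa$. Then I would invoke the variational characterization of Lemma \ref{qw0}, restricted to the radial subclass: the same Euler--Lagrange computation---now perturbing a radial minimizer $f$ by radial test functions $\varphi$---shows that the minimum of the Rayleigh quotient $\mathfrak{G}(\varphi)=\int_{\Sph}|\qq\varphi|^2\,\perh\,/\int_{\Sph}\frac{\varphi^2}{\rho^2}\,\perh$ over radial functions equals $\mu_1^{rad}=Q-4$. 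Substituting the resulting inequality into the formula above yields $II_{\Sph}(W,\per)\geq0$, which is exactly radial stability.

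For the equality discussion I would argue both directions. If $II_{\Sph}(W,\per)=0$, then $\mathfrak{G}(\varphi)$ attains its minimal value $Q-4$, and Lemma \ref{qw0} forces $\varphi$ to be a first eigenfunction, i.e. an eigenfunction for $\mu_1^{rad}=Q-4$; conversely, for $\varphi=\kappa$ the identity $\lh\kappa+\frac{2n-2}{\rho^2}\kappa=0$ from Remark \ref{1eigen}, together with the integration by parts of Corollary \ref{Properties}(v) (the boundary term vanishing since $\Sph$ is closed), gives $\mathfrak{F}(\kappa)=0$. Since the unique (up to constants) such eigenfunction $\kappa^\pm=\pm\frac{\sqrt{1-\rho^2}}{\rho}$ is singular at the poles $\mathcal N,\mathcal S$ and therefore fails the smoothness requirement of Definition \ref{strdef}, the strict inequality $II_{\Sph}(W,\per)>0$ persists on the class of admissible radial variations in the sense of that definition.

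The main obstacle I anticipate is justifying that the variational characterization of Lemma \ref{qw0}, stated there for the full class $\varPhi_1(\Sph)$, transfers verbatim to the radial subclass: one must check that restricting both the minimizer and the test functions to radial functions leaves the Euler--Lagrange identity unchanged and reproduces exactly the radial equation \eqref{radial1eqfinStab}, and that the minimum is genuinely attained (by $\kappa$) within the singular class $\varPhi_1(\Sph)$. Controlling the singularity of $\kappa$ at the poles---which is admissible precisely because of the integrability conditions defining $\varPhi_1(\Sph)$ and the validity of the horizontal Green's formulas there, even though $\kappa$ lies outside the smaller class of Definition \ref{strdef}---is what makes the equality analysis delicate rather than a routine substitution.
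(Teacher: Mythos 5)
Your proposal is correct and follows essentially the same route as the paper: the Poincar\'e-type inequality $\int_{\Sph}|\qq\varphi|^2\,\perh\geq(Q-4)\int_{\Sph}\frac{\varphi^2}{\rho^2}\,\perh$ for radial $\varphi\in\varPhi_1(\Sph)$ obtained by combining Lemma \ref{qw0} with Lemma \ref{qw2}, substituted into \eqref{hurin}, with equality characterized by the first radial eigenfunction $\kappa$, which is singular at the poles. Your extra care about restricting the Rayleigh-quotient argument to the radial subclass and about verifying $\mathfrak F(\kappa)=0$ by the horizontal Green's formulas only makes explicit what the paper leaves implicit.
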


We now discuss some other features of the  general case. We start from a lemma
which is well-known in the classical setting; see \cite{FCS}.

\begin{lemma}\label{zxc}Let $S\subset\mathbb H^n$ be a hypersurface of class $\cont^2$ and let $\Om\subset S$ be any bounded domain. If there exists a function $\psi>0$ on $\Om$ satisfying the equation $\lh \psi= q \psi$, then $$\int_{\Om}\left(|\qq \varphi|^2+ q \varphi^2\right)\,\perh\geq 0$$for all smooth function $\varphi$ compactly supported on $\Om$.
\end{lemma}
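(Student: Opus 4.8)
The plan is to adapt the classical Fischer--Colbrie--Schoen argument (see \cite{FCS}) to the horizontal operator $\lh$, exploiting the integration-by-parts formulas collected in Corollary \ref{Properties}. The central idea is the logarithmic (or ``$\psi$-substitution'') change of the test function: since $\psi>0$ on $\Om$, every smooth $\varphi$ compactly supported in $\Om$ can be written as $\varphi=\psi f$ with $f:=\varphi/\psi$, which is again compactly supported in $\Om$ and of class $\cont^2\ss$, because $\psi$ is $\cont^2$ and stays bounded away from $0$ on the compact support of $\varphi$.

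First I would expand the horizontal gradient of $\varphi=\psi f$ by the Leibniz rule, $\qq\varphi=f\,\qq\psi+\psi\,\qq f$, so that
\[|\qq\varphi|^2=f^2|\qq\psi|^2+2f\psi\langle\qq\psi,\qq f\rangle+\psi^2|\qq f|^2.\]
Next I would use the hypothesis $\lh\psi=q\psi$ to rewrite the zeroth-order term as $q\varphi^2=q\psi^2f^2=(\lh\psi)\,(\psi f^2)$. The key computational step is then to integrate this expression by parts: applying the Green identity of Corollary \ref{Properties}(v) with $\psi$ in the role of the function acted on by $\lh$ and $\psi f^2$ as multiplier (whose boundary contribution vanishes because $\psi f^2=\varphi^2/\psi$ is compactly supported in $\Om$) gives, using $\qq(\psi f^2)=f^2\qq\psi+2\psi f\qq f$,
\[\int_{\Om}q\,\varphi^2\,\perh=\int_{\Om}(\lh\psi)\,(\psi f^2)\,\perh=-\int_{\Om}\langle\qq\psi,\qq(\psi f^2)\rangle\,\perh=-\int_{\Om}\left(f^2|\qq\psi|^2+2f\psi\langle\qq\psi,\qq f\rangle\right)\perh.\]

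Adding the two displays, the terms $f^2|\qq\psi|^2$ and $2f\psi\langle\qq\psi,\qq f\rangle$ cancel exactly, leaving
\[\int_{\Om}\left(|\qq\varphi|^2+q\varphi^2\right)\perh=\int_{\Om}\psi^2|\qq f|^2\,\perh\geq 0,\]
which is the assertion, with equality precisely when $\qq f\equiv 0$, i.e. when $\varphi$ is constant along the horizontal tangent directions. The only point requiring real care --- and hence the main obstacle --- is the justification of the integration by parts, namely that the boundary integral over $\partial\Om$ genuinely drops out: one must check that $f=\varphi/\psi$ and $\psi f^2$ are admissible test functions for the horizontal Green's formulas. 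This follows from the compact support of $\varphi$ together with $\psi>0$, $\psi\in\cont^2$, which keep $f$ and $\psi f^2$ in $\cont^2\ss$ with support compactly contained in $\Om$; should $\Om$ meet the characteristic set, one restricts $\varphi$ to be supported in $\Om\setminus C_S$, or invokes the admissibility framework of Definition \ref{fi}.
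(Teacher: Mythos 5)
Your argument is correct, but it is a genuinely different realization of the Fischer--Colbrie--Schoen idea from the one in the paper. You substitute $\varphi=\psi f$ multiplicatively and integrate by parts the term $q\varphi^2=(\lh\psi)(\psi f^2)$, obtaining the \emph{exact identity}
\[
\int_{\Om}\left(|\qq\varphi|^2+q\,\varphi^2\right)\perh=\int_{\Om}\psi^2\,|\qq f|^2\,\perh ,
\]
whereas the paper sets $\phi:=\log\psi$, derives the pointwise Riccati-type equation $\lh\phi=q-|\qq\phi|^2$, multiplies by $-\varphi^2$, integrates by parts to get $-\int_{\Om}\varphi^2(q-|\qq\phi|^2)\,\perh=\int_{\Om}2\varphi\langle\qq\varphi,\qq\phi\rangle\,\perh$, and then concludes via Young's inequality $2|\varphi\langle\qq\varphi,\qq\phi\rangle|\le\varphi^2|\qq\phi|^2+|\qq\varphi|^2$. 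The two computations use the same integration-by-parts machinery (Corollary \ref{Properties}) and the same vanishing of boundary terms by compact support, but yours buys a little more: the nonnegativity comes with an explicit remainder $\int_{\Om}\psi^2|\qq f|^2\,\perh$, which pins down the equality case, while the paper's route only yields the inequality. (Small slip in your closing remark: equality forces $\qq f\equiv 0$, i.e.\ $\varphi/\psi$ --- not $\varphi$ itself --- is constant along the horizontal tangent directions.) Your closing caveat about admissibility near the characteristic set is the same caveat implicit in the paper's proof, and in the actual applications (Corollaries \ref{zxc2} and \ref{zxc3}) $\Om$ is taken away from the poles, so nothing is lost.
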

\begin{proof}If $\psi>0$ satisfies $\lh \psi= q \psi$ on $\Om$, let us define a new function $\phi:=\log \psi$.
By an elementary calculation we see that $\lh\phi = q- |\qq\phi|^2$. Indeed, one has
\begin{eqnarray*}\lh\phi&=&\div\ss\left(\qq\phi\right)-\varpi\langle\nn^\perp, \qq\phi\rangle\\
&=&\div\ss\left(\dfrac{\qq\psi}{\psi}\right)-\varpi\left\langle\nn^\perp,\dfrac{\qq\psi}{\psi}\right\rangle
\\&=&\left(\dfrac{\Delta\ss\psi}{\psi}-\varpi\left\langle\nn^\perp,\dfrac{\qq\psi}{\psi}\right\rangle\right)-
\dfrac{|\qq\psi|^2}{\psi^2}\\&=& \dfrac{\lh\psi}{\psi}-{|\qq\phi|^2}
\\&=& q-{|\qq\phi|^2}.
\end{eqnarray*}
Now let $\varphi$ be any smooth function with compact support on $\Om$. Multiplying by $-\varphi^2$ both sides of this equation and integrating by parts, yields
\begin{eqnarray}\label{jkhgd}-\int_{\Om}\varphi^2\left(q-|\qq\phi|^2\right)\,\perh=-\int_{\Om}\varphi^2\lh\phi\,\perh=
\int_{\Om}2\varphi\langle\qq\varphi,\qq\phi\rangle\,\perh.
\end{eqnarray}
Now since $$2|\varphi\langle\qq\varphi,\qq\phi\rangle|\leq
2|\varphi||\qq\varphi||\qq\phi|\leq
|\varphi|^2|\qq\phi|^2+|\qq\varphi|^2,$$the thesis follows by
inserting this inequality into \eqref{jkhgd} and then by
cancelling the terms $\int_{\Om}\varphi^2|\qq\phi|^2\,\perh$.
\end{proof}

\begin{corollario}\label{zxc2}Let $\Om\Subset\Sph^+$ or $\Om\Subset\Sph^-$. Then, the following inequality holds
$$\int_{\Om}\left(|\qq \varphi|^2 - \dfrac{2n-2}{\rho^2}\varphi^2\right)\,\perh\geq 0$$for all smooth function $\varphi$ compactly supported on $\Om$.
\end{corollario}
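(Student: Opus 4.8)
The plan is to deduce Corollary~\ref{zxc2} directly from Lemma~\ref{zxc}, whose hypothesis asks only for a strictly positive solution $\psi$ of $\lh\psi=q\psi$ on $\Om$, and which then yields $\int_\Om\left(|\qq\varphi|^2+q\varphi^2\right)\perh\geq 0$ for every compactly supported $\varphi$. The target inequality is precisely the conclusion of that lemma for the choice $q:=-\frac{2n-2}{\rho^2}$, so it suffices to exhibit a positive function on $\Om$ satisfying $\lh\psi=-\frac{2n-2}{\rho^2}\psi$.

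Such a function is already at hand. By Remark~\ref{1eigen}, the function $\kappa=\kappa^\pm=\pm\frac{\sqrt{1-\rho^2}}{\rho}$ satisfies $\lh\kappa+\frac{2n-2}{\rho^2}\kappa=0$ on each hemisphere $\Sph^\pm$, i.e. $\lh\kappa=-\frac{2n-2}{\rho^2}\kappa$. Accordingly, I would set $\psi:=\frac{\sqrt{1-\rho^2}}{\rho}$ on $\Om$. On the north hemisphere one has $\psi=\kappa^+$, while on the south hemisphere $\psi=-\kappa^-$; in either case, by linearity of $\lh$, the function $\psi$ solves $\lh\psi=-\frac{2n-2}{\rho^2}\psi=q\psi$, as required.

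It remains to check the standing hypotheses of Lemma~\ref{zxc}, namely that $\psi$ is of class $\cont^2$ and strictly positive on $\Om$. This is exactly where the compact containment $\Om\Subset\Sph^\pm$ enters: since $\overline{\Om}$ is a compact subset of the open hemisphere, it avoids both the pole (the unique characteristic point, where $\rho=0$) and the equator $\{\rho=1\}$, so $\rho$ stays bounded away from $0$ and from $1$ on $\overline{\Om}$. Consequently $\psi=\frac{\sqrt{1-\rho^2}}{\rho}$ is smooth and $\psi>0$ throughout $\Om$. With $\psi$ and $q$ so chosen, Lemma~\ref{zxc} applies verbatim and delivers $\int_\Om\left(|\qq\varphi|^2-\frac{2n-2}{\rho^2}\varphi^2\right)\perh\geq 0$ for all smooth $\varphi$ compactly supported on $\Om$, which is the assertion.

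The only genuine point requiring care is the strict positivity of $\psi$, and this is precisely what the hypothesis $\Om\Subset\Sph^\pm$ secures. I expect no further obstacle: the argument does not go through on a domain touching the equator, where $\kappa$ vanishes, nor on one encircling a pole, where $\kappa$ is singular — which is consistent with the earlier observation that $\kappa$ is itself an \emph{inadmissible} (globally singular) eigenfunction on all of $\Sph$, and explains why this local estimate does not immediately upgrade to a global stability statement.
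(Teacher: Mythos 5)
Your proof is correct and follows essentially the same route as the paper, which likewise deduces the corollary from Lemma \ref{zxc} with $q=-\frac{2n-2}{\rho^2}$ and a positive radial solution of $\lh\psi=q\psi$ on the hemisphere. The only divergence is that the paper's proof displays $\psi:=\sqrt{(1-\rho^2)/\rho}$, which appears to be a typo for your choice $\psi=\sqrt{1-\rho^2}/\rho=|\kappa|$ --- only the latter actually satisfies $\lh\psi=-\frac{2n-2}{\rho^2}\psi$, as guaranteed by Remark \ref{1eigen} --- and your observation that the compact containment $\Om\Subset\Sph^{\pm}$ must be read as keeping $\rho$ away from $0$ and $1$ (so that $\psi$ is $\cont^2$ and strictly positive) is the correct reading of the hypothesis.
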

\begin{proof} Setting $q=- \dfrac{2n-2}{\rho^2}$, the thesis follows by applying Lemma \ref{zxc} with $\psi:=\sqrt{\frac{  {1-\rho^2}}{\rho}}$
 \end{proof}

Another easy consequence of Lemma \ref{zxc} is contained in the next:
\begin{corollario}\label{zxc3}Set $\mathbb S^\ast:=\left\{p=\exp(z,t)\in\Sph: \rho=|z|\geq \sqrt{\frac{2n-3}{2n-2}}\right\}$. Then, for every
 $\Om\Subset\mathbb S^\ast$  the following inequality holds
$$\int_{\Om}\left(|\qq \varphi|^2 - \dfrac{2(2n-3)}{\rho^2}\varphi^2\right)\,\perh\geq 0$$for all smooth function $\varphi$ compactly supported on $\Om$.
\end{corollario}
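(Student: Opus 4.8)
The plan is to apply Lemma \ref{zxc} with the choice $q := -\frac{2(2n-3)}{\rho^2}$, exactly as in the proof of Corollary \ref{zxc2}. Thus the whole problem reduces to exhibiting a \emph{positive} function $\psi$ on $\mathbb{S}^\ast$ solving $\lh\psi = q\psi$, i.e. $\lh\psi + \frac{2(2n-3)}{\rho^2}\psi = 0$. Once such a $\psi$ is available, Lemma \ref{zxc} yields at once $\int_{\Om}\left(|\qq\varphi|^2 - \frac{2(2n-3)}{\rho^2}\varphi^2\right)\perh \geq 0$ for every $\varphi$ compactly supported in any $\Om \Subset \mathbb{S}^\ast$, which is the assertion.

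The natural candidate is dictated by Lemma \ref{qw2}: the number $2(2n-3)$ is precisely the eigenvalue $\mu_m = m(2n-(m+1))$ with $m=2$. For $m=2$ the Frobenius recurrence truncates (one checks that the factor $(l-m)$ forces the next coefficient to vanish at $l=2$), so the associated radial solution of \eqref{radial1eqfinStab} is, up to a constant, a two-term Laurent polynomial. I would therefore set
$$\psi := (2n-2) - \frac{2n-3}{\rho^2}.$$
The decisive point is that the sign of $\psi$ changes exactly at $\rho^2 = \frac{2n-3}{2n-2}$: indeed $\psi > 0$ if and only if $\frac{2n-3}{\rho^2} < 2n-2$, i.e. $\rho > \sqrt{\frac{2n-3}{2n-2}}$. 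This is precisely the interior of $\mathbb{S}^\ast$, which explains the threshold chosen in its definition. Hence $\psi$ is strictly positive on the interior of $\mathbb{S}^\ast$, and in particular on the support of any $\varphi$ compactly supported in $\Om \Subset \mathbb{S}^\ast$.

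It then remains to verify $\lh\psi + \frac{2(2n-3)}{\rho^2}\psi = 0$. Since $\psi$ is radial, I would insert it into the radial part of $\lh$ recorded in \eqref{ffs} (equivalently, into the left-hand side of \eqref{radial1eqfinStab} with $\mu = 2(2n-3)$): using $\psi' = 2(2n-3)\rho^{-3}$ and $\psi'' = -6(2n-3)\rho^{-4}$ one finds that both $\lh\psi$ and $-\frac{2(2n-3)}{\rho^2}\psi$ equal $2(2n-3)^2\rho^{-4} - 4(2n-3)(n-1)\rho^{-2}$, so they coincide. Alternatively one may simply invoke Lemma \ref{qw2}, which already guarantees that the $m=2$ solution satisfies \eqref{radial1eqfinStab} with $\mu = 2(2n-3)$. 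On $\mathbb{S}^\ast$ the coordinate $\rho$ is bounded away from $0$, so $\psi$ is smooth there and the poles $\mathcal{N},\mathcal{S}$ are irrelevant; with $\psi>0$ and $\lh\psi = q\psi$ on $\Om$, Lemma \ref{zxc} closes the argument.

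The main obstacle is conceptual rather than computational: one must recognize that the right $\psi$ is the second radial eigenfunction of \eqref{radial1eqfinStab} produced in Lemma \ref{qw2}, and that its nodal set $\rho = \sqrt{\frac{2n-3}{2n-2}}$ is exactly the boundary of $\mathbb{S}^\ast$, so that passing to $\mathbb{S}^\ast$ is precisely what renders $\psi$ positive and makes Lemma \ref{zxc} applicable. A minor technical point is the gluing of $\Sph^+$ and $\Sph^-$ across the equator $\rho=1$; since $\psi$ depends only on $\rho$ and the inequality is purely local, this causes no trouble (if desired, one argues on each hemisphere separately).
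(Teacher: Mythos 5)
Your proposal is correct and follows essentially the same route as the paper: the paper likewise applies Lemma \ref{zxc} with $q=-\frac{2(2n-3)}{\rho^2}$ and the positive radial solution $\psi=\frac{2n-2}{2n-3}-\frac{1}{\rho^2}$, which is exactly your $\psi=(2n-2)-\frac{2n-3}{\rho^2}$ divided by the positive constant $2n-3$, so the two arguments coincide. Your extra remarks identifying $2(2n-3)$ as the $m=2$ eigenvalue of Lemma \ref{qw2} and the nodal set of $\psi$ as the boundary of $\mathbb S^\ast$ are a welcome explanation of where the threshold $\sqrt{\tfrac{2n-3}{2n-2}}$ comes from, but they are not needed for the proof itself.
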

\begin{proof} Choose $q=-\frac{2(2n-3)}{\rho^2}$. Note that the function $\psi:=\frac{2n-2}{2n-3}-\frac{1}{\rho^2}$ is strictly positive on every $\Om\Subset\mathbb S^\ast$. Furthermore, it turns out that $\lh \psi= q \psi$. Then, the thesis follows by applying Lemma \ref{zxc}.
\end{proof}

\begin{oss} In the inequalities of both Corollary \ref{zxc2} and Corollary \ref{zxc3}, the function $\varphi$ is not necessarily zero-mean. In other words, we do not require the validity of the condition $\int_\Om \varphi\,\perh=0$. In a sense, these inequalities are stronger than what one might expect.

\end{oss}

Putting together Corollary \ref{zxc2} and Corollary \ref{zxc3}, we immediately get the following:

\begin{teo}[Local Stability]\label{stabilityt2}  The Isoperimetric profile $\Sph$ is a locally stable bounding hypersurface in the sense of  Definition \ref{strdefloc}.\end{teo}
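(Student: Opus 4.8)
The plan is to reduce local stability to the two Hardy-type inequalities already proved in Corollary \ref{zxc2} and Corollary \ref{zxc3}, and then to cover $\Sph$ by neighborhoods to which one or the other applies. First I would note that for a localized volume-preserving normal variation of a neighborhood $\Om\subseteq\Sph$, with variation vector $W=\varphi\,|\PH\nu|\,\nu$ and $\varphi$ smooth and compactly supported in $\Om$, the boundary integrals drop out of the second variation formula, so that exactly as in the derivation of \eqref{hurin} from \eqref{volpres2v} (and using $Q-4=2n-2$)
\[
II_{\Om}(W,\perh)=\int_{\Om}\left(|\qq\varphi|^2-\frac{2n-2}{\rho^2}\varphi^2\right)\perh .
\]
(The volume-preserving hypothesis only enforces $\int_{\Om}\varphi\,\perh=0$, which I will not even need, since the two corollaries hold for arbitrary compactly supported $\varphi$.) Local stability thus amounts to strict positivity of this form on a suitable $\Om$ around each $p\in\Sph$.

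Next I would split into cases by the position of $p$. If $\rho(p)<1$, then $p$ lies in the interior of a single hemisphere $\Sph^{\pm}$ — this includes the poles $\mathcal N,\mathcal S$ at $\rho=0$ — and choosing $\Om$ with $\overline\Om\subset\Sph^{\pm}$ disjoint from the equator $\{\rho=1\}$, Corollary \ref{zxc2} gives the form $\geq 0$ at once. If $\rho(p)=1$, I would use that $\mathbb{S}^{\ast}=\{\rho\geq\sqrt{(2n-3)/(2n-2)}\}$ is a two-sided neighborhood of the equator (as $n>1$ makes $0\le\sqrt{(2n-3)/(2n-2)}<1$), pick $\Om\Subset\mathbb{S}^{\ast}$, and exploit that $n\ge 2$ forces $2(2n-3)\geq 2n-2$, i.e. $\frac{2n-2}{\rho^2}\le\frac{2(2n-3)}{\rho^2}$; then the desired form dominates the one controlled by Corollary \ref{zxc3}, which is $\geq 0$. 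These two families cover $\Sph$.

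To upgrade non-negativity to strict positivity I would use the ground-state substitution behind Lemma \ref{zxc}: writing $\varphi=\kappa g$, where by Remark \ref{1eigen} the positive function $\kappa$ solves $\lh\kappa+\frac{2n-2}{\rho^2}\kappa=0$, integration by parts turns the form into $\int_{\Om}\kappa^2|\qq g|^2\,\perh$. Hence equality forces $\qq g=0$, i.e. $g$ is constant along the horizontal tangent distribution $\HS$. For $n>1$, $\HS$ has rank $2n-1\ge 3$ and is bracket-generating on $\Sph$ away from the poles — brackets of $\HS$-fields carry nonzero $T$-components (since $C^{2n+1}\ss$ has rank $2n-2>0$), which fill out the line complementary to $\HS$ in $T\Sph$ — so by Chow's theorem $g$ is locally constant and, being compactly supported, vanishes identically; thus $\varphi\equiv 0$.

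The hard part will be the two characteristic poles, where $\frac{2n-2}{\rho^2}$ is singular and the comparison function $\kappa\sim\rho^{-1}$ degenerates. There I would perform the ground-state computation on $\{\rho\geq\epsilon\}$ and send $\epsilon\to 0^{+}$, verifying that the boundary term on $\{\rho=\epsilon\}$ disappears: with $\kappa\sim\rho^{-1}$, $|\qq\kappa|\sim\rho^{-2}$, and the weighted slice measure $\sigma^{2n-1}\ss$ on $\{\rho=\epsilon\}$ of order $\epsilon^{2n}$, the contribution is $O(\epsilon^{2n-1})\to 0$, so the Hardy-type identity — and with it positivity and the equality analysis — persists up to the pole. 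Away from the poles the conclusion is immediate from Corollaries \ref{zxc2} and \ref{zxc3}; the poles are the only genuinely delicate point.
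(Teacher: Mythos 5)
Your proposal is correct and follows essentially the same route as the paper, whose entire proof is the single line that the theorem follows by ``putting together Corollary \ref{zxc2} and Corollary \ref{zxc3}'': your covering of $\Sph$ by hemispherical neighborhoods and equatorial neighborhoods contained in $\mathbb S^\ast$, together with the comparison $2n-2\le 2(2n-3)$ for $n\ge 2$, is exactly the intended argument. You in fact go further than the paper by addressing the strict positivity demanded in Definition \ref{strdefloc} (via the ground-state substitution and bracket generation of $\HS$) and the degeneracy at the characteristic poles, points the paper leaves implicit.
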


We end this section with the following:

\begin{oss}[Question]\label{zxc3} If $\mu$ denotes the 1st eigenvalue of \eqref{1eqfinStab}, then is it true that $\mu=2n-2$?
Roughly speaking, is the 1st eigenvalue of \eqref{1eqfinStab}  equal to the first eigenvalue of the radial case?
\end{oss}
Note that a negative answer to this question would automatically imply that Isoperimetric Profiles are unstable.

\subsection{Appendix A: the case of $T$-graphs} \label{tg} \rm
Below we overview  the variational
formulas for the $\HH$-perimeter $\perh$, in  the case of smooth $T$-graphs.\\

Let
$\Om\subseteq\R^{2n}$ be an open set, let $u\in\cont^2(\Om)$
and set
$S:=\{\exp(z, t)\in\mathbb{H}^n: t=u(z)\quad\forall\,\,z\in\Om\}$, i.e. $S$  is the
{\it $T$-graph associated with $u$}. Then,  ${\nu}=\frac{\left(-\nabla_{\R^{2n}}
 u  +\frac{z^{\perp}}{2}, 1\right)}{\sqrt{1+\big\|\nabla_{\R^{2n}} u -\frac{z^\perp}{2}\big\|^2}}$  is the unit normal along $S$ and we have  $\nn=\frac{-\nabla_{\R^{2n}} u +
\frac{z^{\perp}}{2}}{\big\|\nabla_{\R^{2n}} u
-\frac{z^\perp}{2}\big\|}$ and $\varpi=\frac{ 1}
{\big\|\nabla_{\R^{2n}} u
-\frac{z^\perp}{2}\big\|}$. The
$\HH$-perimeter measure $\perh$ on $S$ turns out to be  given by
\begin{eqnarray*}\perh\res\
S=|\PH\nu|\,\sigma^{2n}\rr\res S= \frac{1}{\varpi}\,dz\res \Om
=\left\|\nabla_{\R^{2n}} u -\frac{z^\perp}{2}\right\|\,dz\res
\Om.\end{eqnarray*}The computation of the 1st and 2nd  variation
of
$\perh\res S=\int_{\Om}\left\|\nabla_{\R^{2n}} u -\frac{z^\perp}{2}\right\|\,dz$ can be done by using variations
$\vartheta_t$ which only act along the $T$-direction. So in
particular we are here assuming that the variation vector is given by
$W=\phi\,T$ for some  $\phi\in \cont^1(S)$. Since
$p=\exp(z, u(z))\in S$  for every $z\in\Om$, with a slight abuse
of notation,  we shall also assume that
$\phi:\Om\longrightarrow\R$.
Hence, using $T$-variations one
has\begin{eqnarray*}I_S(\phi\,T,
\perh)=\frac{d}{ds}\bigg|_{s=0}\int_{\Om}\left\|\nabla_{\R^{2n}}(u +
s\phi)
-\frac{z^\perp}{2}\right\|\,dz , \quad II_S(\phi\,T,
\perh)=
\frac{d^2}{ds^{2}}\bigg|_{s=0} \int_{\Om}\left\|\nabla_{\R^{2n}}(u +
s\phi)
-\frac{z^\perp}{2}\right\|\,dz, \end{eqnarray*} for every $\phi\in\cont^1(\Om)$.
An elementary calculation shows that:
\begin{eqnarray}\label{1vT}I_S(\phi\,T,
\perh)&=&-\int_{\Om}\left\langle \nabla_{\R^{2n}}\phi,\left(
\frac{-\nabla_{\R^{2n}}u+\frac{z^\perp}{2}}{\big\|\nabla_{\R^{2n}}
u-\frac{z^\perp}{2} \big\|}\right)\right\rangle\,
dz,\\\nonumber\\\label{2vT}II_S(\phi\,T,
\perh)&=&\int_{\Om}\frac{\big\|\nabla_{\R^{2n}}\phi\big\|^2\big\|\nabla_{\R^{2n}}u-\frac{z^\perp}{2}\big\|^2
-\left\langle\left(\nabla_{\R^{2n}}u-\frac{z^\perp}{2}\right),
\nabla_{\R^{2n}}\phi\right\rangle^2}{\big\|\nabla_{\R^{2n}}
u-\frac{z^\perp}{2}\big\|^3}\, dz.\end{eqnarray}
\begin{oss}\label{yyhhaa}Let
$\Om\subset\R^{2n}$  be a relatively compact open set having
piecewise $\cont^1$-smooth boundary and let $\phi\in\cont^1(\Om)$. Since the integrand in
\eqref{1vT} is everywhere bounded by $\|\nabla_{\R^{2n}}\phi\|$, the 1st variation formula \eqref{1vT} makes sense for every
$T$-graph $u:\Om\longrightarrow\R$ of class $\cont^2$. Furthermore, by using the standard Divergence Theorem, we get\[I_S(\phi\,T,
\perh)=\int_{\Om}\phi\,\div_{\R^{2n}}\left(
\frac{-\nabla_{\R^{2n}}u+\frac{z^\perp}{2}}{\big\|\nabla_{\R^{2n}}
u-\frac{z^\perp}{2} \big\|}\right)\, dz\]for every
$\phi\in\cont_0^1(\Om)$, where $\MS=-\div_{\R^{2n}}\left(
\frac{-\nabla_{\R^{2n}}u+\frac{z^\perp}{2}}{\big\|\nabla_{\R^{2n}}
u-\frac{z^\perp}{2} \big\|}\right).$ The
integrand in \eqref{2vT} can be estimated   by $2 \frac{\|\nabla_{\R^{2n}} \phi\|^2}{\|\nabla_{\R^{2n}}
u-\frac{z^\perp}{2} \|}$. Thus, by assuming
\begin{equation}\label{condvar2}\frac{1}{\big\|\nabla_{\R^{2n}}
u-\frac{z^\perp}{2}\big\|}\in L^1(\Om, dz),\end{equation} the 2nd
variation formula  \eqref{2vT} makes sense for every $T$-graph
$u:\Om\longrightarrow\R$ of class $\cont^2$. It is not
difficult to show that \eqref{condvar2} is equivalent to  $\frac{1}{|\PH\nu|}\in L^1(S, {\sigma^{2n}\rr})$; compare with
Lemma \ref{sv}. Finally, under the same assumptions, one has $II_S(\phi\,T,
\perh)\geq 0$ for every $\phi\in\cont^1(\Om)$.
\end{oss}

\begin{oss}\label{cazfri}For radial $T$-graphs  of class at least $\cont^1$,  condition
\eqref{condvar2} is satisfied. Indeed, one has
${\big\|\nabla_{\R^{2n}}
u-\frac{z^\perp}{2}\big\|}=\sqrt{(u_\rho')^2 +\frac{\rho^2}{4}}$,
and the claim follows since $\frac{1}{\sqrt{(u_\rho')^2
+\frac{\rho^2}{4}}}\leq \frac{2}{\rho}\in L^1(\Om, dz)$.

\end{oss}

In the
case of Isoperimetric Profiles, we  use  \eqref{1vT} and \eqref{2vT}  to give a heuristic proof of
their stability.
\begin{oss}
Let $S=\Sph^\pm$ and  $u=\pm u_0$; see Section \ref{Sez3}. In this
case $\Om=B_1(0)\subset\R^{2n}$ and we have
\begin{eqnarray*}I_{\Sph^\pm}(\phi\,T,
\perh)=-\int_{B_1(0)}\langle\nabla_{\R^{2n}}\phi, z\rangle\,
dz=\int_{B_1(0)}\left(\phi\,\div_{\R^{2n}}z-\div_{\R^{2n}}(\phi
z)\right)\, dz,\end{eqnarray*}where the second equality follows by applying
the usual Divergence Theorem. Therefore
\begin{eqnarray*}I_{\Sph^\pm}(\phi\,T,
\perh)=2n\,\int_{B_1(0)}\phi\, dz\end{eqnarray*}for every $\phi\in
\cont_0^1(B_1(0))$. This shows  that each hemisphere $\Sph^\pm$  is a critical point of $\perh$ belonging to the
class $\Psi=\{\phi\in \cont_0^1(B_1(0)):\int_{B_1(0)}\phi\,
dz=0\}$. Note that the last integral condition gives a volume constraint on the
functional $\perh\res \Sph^\pm$. In other words, we are using
``volume preserving variations''; see Section \ref{volvarIS}. By
Cauchy-Schwartz we obtain
\begin{eqnarray*}II_{\Sph^\pm}(\phi\,T,
\perh)\geq 2 \int_{B_1(0)}\|\nabla_{\R^{2n}}\phi\|^2
{\left(1-\rho^2\right)^{\frac{3}{2}}}\, dz\geq
0,\end{eqnarray*}or, in other words, the stability of each
hemisphere in the class $\Psi$.

\end{oss}

\subsection{Appendix B: further remarks about stability} \label{ptg}

For future purposes, we discuss two conditions concerning stability: a sufficient condition and a necessary  condition.
Let  $D\subset\mathbb{H}^n$  be a compact domain with boundary $S=\partial D$ satisfying either the hypotheses of Corollary \ref{bho} or those of Corollary \ref{bho2}.
Integration by parts yields
\begin{eqnarray*}II_S(W,\per)&=&\int_{S}\left( |\qq\varphi|^2+\varphi^2\left(
-\|S\cc\|^2\ngr+ 2\frac{\partial\varpi}{\partial\nn^\perp}
-\frac{n+1}{2}\varpi^2 \right)\right)\perh\\&=&-\int_{S}\varphi \left(\lh \varphi-\varphi\left(
-\|S\cc\|^2\ngr+ 2\frac{\partial\varpi}{\partial\nn^\perp}
-\frac{n+1}{2}\varpi^2 \right)\right)\perh.\end{eqnarray*}

Therefore, we can turn our attention to a suitable eigenvalue problem for the operator $\lh$.
More precisely, let us consider the following:

\begin{displaymath}
(P)\,\,\left\{%
\begin{array}{ll}
   \lh \varphi=\lambda \varphi\left(
-\|S\cc\|^2\ngr+ 2\frac{\partial\varpi}{\partial\nn^\perp}
-\frac{n+1}{2}\varpi^2 \right)\quad\mbox{on $S$}
 \\\\\int_{S}\varphi\,\perh=0
\end{array}%
\right.\end{displaymath}whenever $\varphi\in \varPhi(S)$, $\varphi\neq 0$; see Definition \ref{fi}. Thus we see that: \begin{itemize} \item \it A sufficient condition for the stability of $D$ is that the first eigenvalue of this problem is greater than, or equal to,  one.\end{itemize}

\rm For what concerns the necessary condition, we first state an integral identity.

\begin{lemma}\label{mio}Let  $S\subset\mathbb{H}^n$ be a $\cont^2$-smooth compact
hypersurface without boundary. Then
\begin{eqnarray}\label{id2}
 \int_S\left(2\varpi^2\frac{\partial\varpi}{\partial\nn^{\perp}}-\dfrac{2n}{3}\varpi^4\right)\,\perh=0,\end{eqnarray}
whenever $\varpi^3\nn^\perp$ is admissible  (for the horizontal divergence formula); see Definition \ref{fi}.\end{lemma}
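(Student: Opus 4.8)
The plan is to apply the horizontal divergence formula to the horizontal \emph{tangent} vector field $X:=\varpi^3\,\nn^\perp$, which is of class $\cont^1$ on $S\setminus C_S$ and, by hypothesis, admissible in the sense of Definition \ref{fi}. Since $S$ has empty boundary and $X$ is tangent to $S$, so that $\langle X,\nn\rangle=\varpi^3\langle\nn^\perp,\nn\rangle=0$, Theorem \ref{GD} — in the extended form valid for admissible, possibly singular fields discussed after Definition \ref{fi} — gives
\begin{equation*}
\int_S\lg(\varpi^3\nn^\perp)\,\perh=0 .
\end{equation*}
It then remains to identify $\lg(\varpi^3\nn^\perp)$ with a multiple of the integrand in \eqref{id2}.

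First I would unravel the definition $\lg(X)=\div\ss X-\varpi\langle\nn^\perp,X\rangle$. The zeroth-order term is immediate: since $\nn^\perp=-C^{2n+1}\cc\nn$ and $C^{2n+1}\cc$ is orthogonal, one has $|\nn^\perp|^2=1$, whence $-\varpi\langle\nn^\perp,\varpi^3\nn^\perp\rangle=-\varpi^4$. For the first-order term I would invoke the Leibniz rule $\div\ss(f\,Y)=\langle\qq f,Y\rangle+f\,\div\ss Y$, together with $\qq(\varpi^3)=3\varpi^2\,\qq\varpi$ and $\langle\qq\varpi,\nn^\perp\rangle=\partial\varpi/\partial\nn^\perp$, to obtain
\begin{equation*}
\div\ss(\varpi^3\nn^\perp)=3\varpi^2\frac{\partial\varpi}{\partial\nn^\perp}+\varpi^3\,\div\ss\nn^\perp .
\end{equation*}

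The crux is therefore to compute $\div\ss\nn^\perp$, and this is the step I expect to require the most care. Here I would use that the constant skew-symmetric map $C^{2n+1}\cc$ is parallel for the flat connection $\gc$, so that $\gc_Y\nn^\perp=-C^{2n+1}\cc\,\gc_Y\nn=-C^{2n+1}\cc\,W\cc Y$. Tracing over an adapted horizontal frame $\{\tau_i:i\in I\ss\}$, using skew-symmetry of $C^{2n+1}\cc$ and the compatibility identity $\langle W\cc\tau_i,Z\rangle=-B\cc(\tau_i,Z)$ (valid for $Z\in\HS$), this reduces $\div\ss\nn^\perp$ to $-\mathrm{Tr}\big(B\cc(\cdot,C^{2n+1}\ss\,\cdot)\big)$, which by Lemma \ref{fios}(ii) equals $-(n-1)\varpi$. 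Care is needed because $C^{2n+1}\cc$ does not preserve $\HS$ — it interchanges $\nn$ and $\nn^\perp$ — but as $W\cc\tau_i\in\HS$ only the $\HS$-component $C^{2n+1}\ss\tau_i:=\P\ss C^{2n+1}\cc\tau_i$ contributes, so the trace is precisely the one appearing in Lemma \ref{fios}(ii).

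Substituting $\div\ss\nn^\perp=-(n-1)\varpi$ yields
\begin{equation*}
\lg(\varpi^3\nn^\perp)=3\varpi^2\frac{\partial\varpi}{\partial\nn^\perp}-(n-1)\varpi^4-\varpi^4=3\varpi^2\frac{\partial\varpi}{\partial\nn^\perp}-n\varpi^4 .
\end{equation*}
Integrating over $S$ and using the vanishing established above, then multiplying by $\tfrac{2}{3}$, produces exactly \eqref{id2}. In summary, once the sign- and projection-bookkeeping in the identity $\div\ss\nn^\perp=-(n-1)\varpi$ is carried out correctly, the statement follows from a single application of the horizontal divergence formula, with the admissibility of $\varpi^3\nn^\perp$ guaranteeing its validity up to the characteristic set $C_S$.
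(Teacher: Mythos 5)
Your proposal is correct and follows essentially the same route as the paper: apply the horizontal divergence formula to the admissible tangent field $\varpi^3\nn^\perp$ on the closed hypersurface, expand $\lg(\varpi^3\nn^\perp)$ by the Leibniz rule, and reduce $\div\ss\nn^\perp$ to $-\mathrm{Tr}\big(B\cc(\,\cdot\,,C^{2n+1}\ss\,\cdot)\big)=-(n-1)\varpi$ via Lemma \ref{fios}(ii). Your sign- and projection-bookkeeping for $\div\ss\nn^\perp$ is accurate (and in fact more explicit than the paper's, which states that identity without derivation).
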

\begin{proof} We have
\begin{eqnarray*}\int_S\lg(\varpi^3\nn^\perp)\,\perh=\int_S\left(\div\ss(\varpi^3\nn^\perp) -
\varpi\langle \nn^\perp,\varpi^3\nn^\perp\rangle\right)\,\perh=\int_S\left(\div\ss(\varpi^3\nn^\perp) -
\varpi^4\right)\,\perh=0.\end{eqnarray*}Since
\begin{eqnarray*} \div\ss( \varpi^3\nn^{\perp})&=&
3\varpi^2\frac{\partial\varpi}{\partial\nn^{\perp}}+\varpi^3\div\ss(\nn^{\perp})
=3\varpi^2\frac{\partial\varpi}{\partial\nn^{\perp}}+
\varpi^3\left(\sum_{i\in
I\ss}\langle\gs_{\tau_i}\nn^{\perp},\tau_i\rangle\right)\\&=&
3\varpi^2\frac{\partial\varpi}{\partial\nn^{\perp}}-\varpi^3\,
\mathrm{Tr}\big(B\cc(\,\cdot\,, C^{2n+1}\ss\,
 \cdot)\big),\end{eqnarray*}where $C^{2n+1}\ss=C^{2n+1}\cc|_{\HS}$, we get that  \begin{eqnarray}\label{id1}
 \int_S\left(3\varpi^2\frac{\partial\varpi}{\partial\nn^{\perp}}-\varpi^4\right)\,\perh=
 \int_S\left(\mathrm{Tr}\big(B\cc(\,\cdot\,, C^{2n+1}\ss\,
 \cdot)\big)\right)  \varpi \perh.\end{eqnarray}
By Lemma \ref{fios} we have
 $\mathrm{Tr}\big(B\cc(\,\cdot\,, C^{2n+1}\ss\,
 \cdot)\big)=(n-1)\varpi$ and  \eqref{id2} easily follows  from
 \eqref{id1}.\end{proof}

Now we apply Lemma \ref{mio} together with a special choice of the variation vector $W$. Here we have to assume the validity of Corollary \ref{bho2} for a variation $\vartheta_t$, having variation vector $W=\varpi |\PH \nu| \nu$.  Note that $\varpi$ is a 0-mean function on $S$ with respect to the measure $\perh$ and that $\varpi$ is smooth out of $C_S$.
Moreover, let us suppose that the vector field  $\varpi^3\nn^\perp$ is admissible  (for the horizontal divergence formula); see Definition \ref{fi}.
It follows that
\begin{eqnarray*}II_S(W,\per)&=&\int_{S}\left( |\qq\varpi|^2+\varpi^2\left(
-\|S\cc\|^2\ngr+ 2\frac{\partial\varpi}{\partial\nn^\perp}
-\frac{n+1}{2}\varpi^2 \right)\right)\perh\\&=&\int_{S}\left( |\qq\varpi|^2-\varpi^2\left(
 \|S\cc\|^2\ngr+ \left(\dfrac{3-n}{6}\right)\varpi^2 \right)\right)\perh. \end{eqnarray*}

\begin{itemize} \item \it
Under our current assumptions, a necessary condition for the stability of $D$ is given by the following geometric inequality:
\begin{equation}\int_{S} |\qq\varpi|^2\,\perh \geq \int_S\varpi^2 \left(\|S\cc\|^2\ngr+ \left(\dfrac{3-n}{6}\right)\varpi^2 \right) \perh. \end{equation}
 \end{itemize}We stress that, in the case of the Isoperimetric Profile $\Sph$, this inequality  is in fact an identity.

\rm

{\footnotesize \noindent Francescopaolo Montefalcone:
\\Dipartimento di Matematica Pura e Applicata\\Universit\`a degli Studi di Padova,\\
  Via Trieste, 63, 35121 Padova (Italy)\,
 \\ {\it E-mail address}:  {\textsf montefal@math.unipd.it}}


\begin{thebibliography}{99}\label{referenze}{\scriptsize

\bibitem{DC} {\sc J.L~Barbosa,\,M.~do Carmo}, {\em Stability of Hypersurfaces with constant mean curvature,}
Math. Zeit. 185 (1984) 339-353.

\bibitem{balogh} {\sc Z.M.~Balogh}, {\em Size of characteristic sets and functions
with prescribed gradient,} J. Reine Angew. Math.  564
 (2003) 63--83.

\bibitem{Bal3} {\sc Z.M.~Balogh,\, C.~Pintea,\, H.~Rohner},
{\em Size of tangencies to non-involutive distributions,} Preprint
2010.

\bibitem{CDG}{\sc L.~Capogna,\,D.~Danielli,\,N.~Garofalo},
{\em The geometric Sobolev embedding for vector fields and the
isoperimetric inequality},  Comm. Anal. Geom. {{12}}, 1994.

\bibitem{CDPT}{\sc L.~Capogna,\, D.~Danielli,\, S.~Pauls,\, J.T.~Tyson},
{\em  An introduction to the Heisenberg group and the
sub-Riemannian isoperimetric problem},   Progress in Mathematics,
vol. 259, Birkhauser Verlag, Basel, 2007.


\bibitem{Ch1}{\sc I.~Chavel},
{\em ``Riemannian Geometry: a modern introduction''}, Cambridge
University Press, 1994.\bibitem{Ch3} \leavevmode\vrule height 2pt
depth -1.6pt width 23pt, {\em Eigenvalues in Riemannian Geometry},
Pure and Applied Mathematics { 115}, Academic Press, 1984.

\bibitem {vari} {\sc J.J~Cheng,\, J.F.~Hwang,\, A.~Malchiodi,\, P.~Yang}, {\em
Minimal surfaces in pseudohermitian geometry,} {Ann. Sc. Norm.
Sup. Pisa Cl. Sci.}, { 5}, IV (2005) 129-179.


\bibitem{Corvin} {\sc L.J.~Corvin,\,F.P.~Greenleaf}, {\em
Representations of nilpotent Lie groups and their applications},
Cambridge University Press (1984).








\bibitem{DanGarN8} {\sc  D.~Danielli, \,N.~Garofalo, \,D.M.~Nhieu}, {\em
Minimal surfaces, surfaces of constant mean curvature and
isoperimetry in Carnot groups}, preprint 2001.



\bibitem{gar}{\leavevmode\vrule height
2pt depth -1.6pt width 23pt}, {\em Sub-Riemannian Calculus on
Hypersurfaces in Carnot groups}, arXiv:DG/0512547.



\bibitem {gar2}{\leavevmode\vrule height
2pt depth -1.6pt width 23pt},{\em  A partial solution of the
isoperimetric problem for the Heisenberg group}, Forum Math. 20
(2008), no. 1, 99, { 143}.











\bibitem{FE} {\sc H.~Federer}, {\em ``Geometric Measure Theory''}, Springer Verlag, 1969.




\bibitem{FCS} {\sc D.~Fisher-Colbrie,\,R.~Schoen}, {\em The structure of complete stable minimal surfaces in 3-manifolds of non-negative scalar curvature}, Communications of Pure and Applied Mathematics, vol. XXXIII, 119-211 (1980).



\bibitem{FSSC3}
{\sc B.~Franchi,\, R.~Serapioni,\, F.S.~Cassano}, {\em
Rectifiability and Perimeter in the Heisenberg Group}, Math. Ann.,
{ 321}, 479-531, 2001.

\bibitem{FSSC5} \leavevmode\vrule height
2pt depth -1.6pt width 23pt ,  {\em On the structure of finite
perimeter sets in step 2 Carnot groups}, J. Geom. Anal., { 13},
no. 3, 421-466, 2003.

\bibitem{G} {\sc N.~Garofalo,\, S.~Pauls}, {\em The Berstein problem in the Heisenberg group,}
arXiv:math.DG/0209065.



\bibitem{Gr1} {\sc M.~Gromov}, {\em Carnot-Carath\'eodory spaces seen from
within}, in {\em ``Subriemannian Geometry''}, Progress in
Mathematics, { 144}. ed. by A.Bellaiche and J.Risler,
Birkhauser Verlag, Basel, 1996.

\bibitem{Helgason}{\sc S.~Helgason}, {\em ``Differential geometry, Lie groups,
and symmetric spaces''}, Academic Press, New York (1978).

\bibitem{HP}{\sc R.H~Hladky,\, S.D.~Pauls},
{\em Constant mean curvature surfaces in sub-Riemannian geometry},
 J. Differential Geom.  79, no. 1 (2008) 111--139.
\bibitem{HP2}\leavevmode\vrule height 2pt depth -1.6pt width
23pt, {\em Variation of Perimeter Measure in sub-Riemannian
geometry}, Preprint 2007.


\bibitem{Hicks}{\sc N.J~Hicks},
{\em ``Notes on Differential geometry''}, Van Nostrand Reinholds
Company, London, 1971.


\bibitem{LeoM}{\sc G.P.~Leonardi \& S.~Masnou}, {\em On the isoperimetric problem in the Heisenberg group
$\mathbb{H}^n$}, Ann. Mat. Pura Appl. (4) 184 (2005), no. 4, 533


\bibitem{Mag}
{\sc V.~Magnani}, {\em Characteristic points, Rectifiability and
Perimeter measure on stratified groups}, J. Eur. Math. Soc.
(JEMS), { 8}, no. 5, {585-609}, 2006.


\bibitem{3}
{\sc J.~Milnor}, {\em Curvatures of left-invariant Riemannian
metrics}, {Adv. Math.}, { 21} (1976) {293--329}.


\bibitem{Monte}
{\sc F.~Montefalcone}, {\em Some Remarks in Differential and
 Integral  Geometry of Carnot Groups} {Tesi di Dottorato}--Universit\`{a}
 degli Studi di Bologna-- Aprile {2004}.

\bibitem{Monteb}{\leavevmode\vrule
height 2pt depth -1.6pt width 23pt}, {\em Hypersurfaces and
variational formulas in sub-Riemannian Carnot groups}, Journal de
Math\'ematiques Pures et Appliqu\'ees,  {87} (2007) 453-494.



\bibitem{Monted}{\leavevmode\vrule
height 2pt depth -1.6pt width 23pt}, {\em Hypergeometric solutions
of the closed eigenvalue problem on Heisenberg Isoperimetric
Profiles},  ArXiv 2011.

\bibitem{Montegv}{\leavevmode\vrule
height 2pt depth -1.6pt width 23pt}, {\em CC-distance and metric
normal of smooth hypersurfaces in sub-Riemannian Carnot groups},
  ArXiv 2009.





\bibitem{Montgomery} {\sc R.~Montgomery}, {\em
``A Tour of Subriemannian Geometries, Their Geodesics and
Applications''}, AMS, Math. Surveys and Monographs, 91, 2002.




\bibitem{Monti}{\sc R.~Monti}, {\em Heisenberg isoperimetric problem. The axial case}, Adv. Calc. Var. 1 (2008), 93, { 121}.

\bibitem{Monti2}{\leavevmode\vrule
height 2pt depth -1.6pt width 23pt}, {\em Brunn-Minkowski and
isoperimetric inequality in the Heisenberg group}, Ann. Acad. Sci.
Fenn. Math. 28 (2003), no. 1, 99, { 109}.
\bibitem{Monti3}{\sc R. Monti \& M. Rickly}, {\em Convex isoperimetric sets in the Heisenberg group}, Ann.
Sc. Norm. Super. Pisa Cl. Sci. (5) 8 (2009), no. 2, 391, {415}.

\bibitem{Monti4}{\sc R.~Monti \& D.~Vittone}, {\em Sets with finite $\HH$-perimeter and controlled normal},
Math.Zeit. (2010), 1, {17}.















\bibitem{P1}{\sc P.~Pansu}, {\em ``Geometrie du Group d'Heisenberg''}, These
pour le titre de Docteur, 3\`eme cycle, Universite Paris {\rm
VII}, 1982.


\bibitem{Pansu2}{\leavevmode\vrule
height 2pt depth -1.6pt width 23pt}, {\em An isoperimetric
inequality on the Heisenberg group}, Rend. Sem. Mat. Univ.
Politec. Torino (1983), no. Special Issue, 159 { 174} (1984),
Conference on differential geometry on homogeneous spaces (Turin,
1983).
\bibitem{P4}\leavevmode\vrule
height 2pt depth -1.6pt width 23pt, {\em Submanifolds and
differential forms in Carnot manifolds, after M. Gromov et M.
Rumin.} (2005) 41 pages.


\bibitem{RR}
{\sc M.Ritor\'e, C.Rosales}, {\em Area stationary surfaces in the
Heisenberg group $\mathbb{H}^1$}, Adv. Math. 219 no. 2 (2008)



\bibitem{Spiv} {\sc M.~Spivak}, {\em ``Differential Geometry''},
Volume IV, Publisch or Perish, 1964.


\bibitem{Stein} {\sc E.M.~Stein}, {\em ``Harmonic Analysis''}, Princeton
University Press, 1993.






\bibitem{Taylor} {\sc M.E.~Taylor}, {\em ``Measure Theory and Integration''}, Graduate Studies in Mathematics, Vol. 76, A.M.S. (2006).



\bibitem{GWo}{\sc Z.X.Wang,\, D.R.~Guo}, {\em
Special Functions}, World Scientific Publishing Co Pte Ltd., 1989.}\\\\\\\\





\end{thebibliography}
\end{document}